\documentclass[a4paper,10pt,reqno]{amsart}
\usepackage{amsmath}
\usepackage{cases}
\usepackage{amsfonts}
\usepackage[colorlinks,linkcolor=blue,citecolor=blue]{hyperref}
\usepackage{latexsym, amssymb, amsmath, amsthm, bbm}
\usepackage[all]{xy}
\usepackage{pgfplots}
\usepackage{enumerate}
\usepackage{mathrsfs}

\DeclareSymbolFont{EulerExtension}{U}{euex}{m}{n}
\DeclareMathSymbol{\euintop}{\mathop} {EulerExtension}{"52}
\DeclareMathSymbol{\euointop}{\mathop} {EulerExtension}{"48}

\allowdisplaybreaks[4]

\setlength{\textwidth}{5.6truein}
\setlength{\textheight}{8.2truein}
\setlength{\topmargin}{-0.13truein}
\addtolength{\parskip}{5pt}

\def \id{\operatorname{id}}

\def \k{\Bbbk}

\def \span{\operatorname{span}}

\usepackage{mathtools}

\numberwithin{equation}{section}

\newtheorem{theorem}{Theorem}[section]
\newtheorem{lemma}[theorem]{Lemma}
\newtheorem{proposition}[theorem]{Proposition}
\newtheorem{corollary}[theorem]{Corollary}
\newtheorem{definition}[theorem]{Definition}
\newtheorem{example}[theorem]{Example}
\newtheorem{remark}[theorem]{Remark}

\begin{document}
\title[Coquasitriangular structures of abelian extensions]{Coquasitriangular structures on Hopf algebras constructed via abelian extensions}
\thanks{}
\author[J. Yu]{Jing Yu$^\dag$}
\author[X. Zhen]{Xiangjun Zhen}
\address{School of Mathematical Sciences, University of Science and Technology of China, Hefei 230026, China}
\email{yujing46@ustc.edu.cn}
\address{School of Mathematics, Nanjing University, Nanjing 210093, China}
\email{xjzhen@smail.nju.edu.cn}

\thanks{2020 \textit{Mathematics Subject Classification}. 16T05, 16T25.}
\keywords{Hopf algebras, Coquasitriangular, Abelian extension}

\thanks{$^\dag$ Corresponding author}
\date{}
\maketitle
\begin{abstract}
The aim of this paper is to study coquasitriangular structures on a class of cosemisimple Hopf algebras of the form $\Bbbk^G {}^\tau \#_{\sigma} \Bbbk F$, constructed as abelian extensions of $\Bbbk F$ by $\Bbbk^G$ for a finite group $G$ and an arbitrary group $F$. We investigate when a coquasitriangular structure exists on $\Bbbk^G{}^\tau\#_{\sigma}\Bbbk F$ and provide characterizations of its coquasitriangular structures.
As an application, we study the coquasitriangular structures for the case where $G = \mathbb{Z}_2$.
\end{abstract}

\section{Introduction}
Quasitriangular Hopf algebras, introduced by Drinfeld, play a crucial role in the theory of quantum groups \cite{Dri86}. The introduction of the dual concept, the coquasitriangular Hopf algebra, was credited to the independent works of Majid \cite{Maj91} and Larson with Towber \cite{LT91}. Since then, significant attention has been directed towards determining whether a Hopf algebra is (co)quasitriangular, as well as classifying all possible (co)quasitriangular structures on it. Some researches related to this topic can be found in \cite{Ago13, AEG01, GW98, HL10,Jia05,JW05,Kei18,Nat06,Nik19}.

Recall that an abelian extension of Hopf algebras is a sequence $ K\xrightarrow{\iota} H \xrightarrow{\pi} A $ with $A$ cocommutative and $K$ commutative. Such extensions have been particularly useful for classifying semisimple Hopf algebras. A key example is the abelian extension of $\k F$ by $\k^G$, where $F, G$ are finite groups. Such extensions were classified by Masuoka (\cite[Proposition 1.5]{Mas02}), who showed that the corresponding Hopf algebra $H$ can be expressed as $\k^G\#_{\sigma, \tau}\k F$. The question of existence of quasitriangular structures on
$\k^G\#_{\sigma, \tau}\k F$ has been considered before. In 2011, Natale \cite{Nat11} proved that bicrossed product Hopf algebras arising from exact factorizations in almost simple finite groups admit no quasitriangular structure. Zhou and his cooperators \cite{ZL21,ZZ24} studied the explicit quasitriangular structures on
$\k^G\#_{\sigma, \tau}\k F$ in the case where
$F=\mathbb{Z}_2$.

In \cite{YLZZ25}, the authors removed the finiteness condition on $F$ and constructed a class of infinite-dimensional cosemisimple Hopf algebras $\Bbbk^G{}^\tau\#_{\sigma}\Bbbk F$ through abelian extensions of $\k F$ by $\k^G$. A natural question that arises is whether the study of quasitriangular structures on
$\k^G\#_{\sigma, \tau}\k F$ for finite groups
$F$ can be extended to the case of coquasitriangular structures on $\Bbbk^G{}^\tau\#_{\sigma}\Bbbk F$ when
$F$ is an infinite group. Moreover, we seek to establish characterizations for the existence of coquasitriangular structures on such Hopf algebras in full generality, without imposing additional constraints.

In this paper, we aim to establish necessary conditions for the existence of a coquasitriangular structure on the Hopf algebra
$\Bbbk^G{}^\tau\#_{\sigma}\Bbbk F$, where $G$ is a finite group and $F$ is arbitary group, and to investigate the characterization of such structures. Our approach is based on the fact that the category of comodules over a coquasitriangular Hopf algebra forms a braided monoidal category. As a consequence, if
$\Bbbk^G{}^\tau\#_{\sigma}\Bbbk F$ is coquasitriangular, it follows that the Grothendieck ring of its comodule category is commutative.
Note that the category of finite-dimensional right comodules over $\Bbbk^G{}^\tau\#_{\sigma}\Bbbk F$ is a semisimple tensor category (see Lemma \ref{lem:cose}).
The authors in \cite{YLZZ25} showed that all simple right $\k^G{}^\tau\#_{\sigma}\k F$-comodules can be described as induced comodules from certain coalgebras $\k^{G_f}_{\tau_f}$ for any $f\in F$. Furthermore, they proved that the Grothendieck ring of the category of finite-dimensional comodules over a cosemisimple Hopf algebra is completely determined by the ring structure of its irreducible characters.

Based on these observations, we can investigate when the Grothendieck ring of the comodule category of
$\Bbbk^G{}^\tau\#_{\sigma}\Bbbk F$ is commutative, thereby obtaining necessary conditions for
$\Bbbk^G{}^\tau\#_{\sigma}\Bbbk F$ to admit a coquasitriangular structure. Our main results are Theorems \ref{prop:OfOf} and \ref{prop:cqtandcomod}, stating that:
\begin{theorem}
Suppose $\Bbbk^G{}^\tau\#_{\sigma}\Bbbk F$ admits a coquasitriangular structure. Then
$O_fO_{f^\prime}=O_{f^\prime}O_f$ for all $f, f^\prime\in F.$
\end{theorem}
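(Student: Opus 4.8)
The plan is to read the statement off from the braiding that a coquasitriangular structure imposes on the comodule category. Write $H=\Bbbk^G{}^\tau\#_{\sigma}\Bbbk F$, and let $R$ denote the Grothendieck ring of the semisimple tensor category of finite-dimensional right $H$-comodules (cf.\ Lemma~\ref{lem:cose}); by \cite{YLZZ25}, $R$ is the free abelian group on the irreducible characters $\chi_{(f,\rho)}$ induced from the coalgebras $\Bbbk^{G_f}_{\tau_f}$, with multiplication the product of irreducible characters of the cosemisimple Hopf algebra $H$. If $H$ carries a universal $r$-form, this category is braided, so for all comodules $M,N$ the braiding $c_{M,N}\colon M\otimes N\xrightarrow{\ \sim\ }N\otimes M$ is an isomorphism of $H$-comodules; passing to classes shows that $R$ is \emph{commutative}.

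Next I would check that each $O_f$ genuinely lies in $R$, so that the asserted identity takes place there. Since $G$ is finite, every $G_f$ is finite and the coalgebra $\Bbbk^{G_f}_{\tau_f}$ is finite-dimensional; hence the comodule induced from it is finite-dimensional, and its class $O_f=\sum_{\rho}(\dim\rho)\,\chi_{(f,\rho)}$, the sum running over the irreducible $\tau_f$-projective corepresentations $\rho$ of $G_f$, is a finite $\N$-linear combination of irreducible characters, i.e.\ $O_f\in R$. Consequently $O_fO_{f'}$ and $O_{f'}O_f$ are well defined in $R$: the tensor product of two finite-dimensional comodules is finite-dimensional, hence decomposes as a finite sum of irreducibles. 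The theorem is then immediate — $R$ is commutative because $H$ is coquasitriangular, so $O_fO_{f'}=O_{f'}O_f$ for all $f,f'\in F$; equivalently, one may apply the braiding $c$ directly to the two finite-dimensional comodules underlying $O_f$ and $O_{f'}$.

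The only step that requires real care — and the one I expect to be the main obstacle — is the bookkeeping placing $O_f$ and the product $O_fO_{f'}$ correctly inside $R$, which relies on the explicit description from \cite{YLZZ25} of the simple $H$-comodules (and of the character ring of a cosemisimple Hopf algebra). If instead $O_f$ is understood as the orbit sum in $\Bbbk[F]$, i.e.\ in the Grothendieck ring of $\Corep(\Bbbk F)$, the argument adapts: the Hopf surjection $\pi\colon H\to\Bbbk F$ induces a monoidal functor, hence a ring homomorphism $R\to\mathbb{Z}[F]$ carrying $\chi_{(f,\rho)}$ to $(\dim\rho)\,O_f$, so that $n_fn_{f'}\,O_fO_{f'}=n_fn_{f'}\,O_{f'}O_f$ in $\mathbb{Z}[F]$ for suitable positive integers $n_f,n_{f'}$, and torsion-freeness of $\mathbb{Z}[F]$ yields the claim. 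In either reading the braiding does the work; the genuinely substantial part of the paper is the translation of this commutativity into concrete conditions on the matched pair $(F,G)$, the content of Theorem~\ref{prop:cqtandcomod}.
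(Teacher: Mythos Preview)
Your first reading of $O_f$ is wrong: in the paper $O_f$ is the orbit $\{g\triangleright f:g\in G\}\subseteq F$, and $O_fO_{f'}=\{xy:x\in O_f,\,y\in O_{f'}\}$ is a \emph{subset} of $F$, not an element of the Grothendieck ring $R$ of $H$-comodules. So the first two paragraphs, which place $O_f$ inside $R$ and invoke commutativity there, establish a different statement from the one claimed.

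Your alternative in the final paragraph is, however, correct and is essentially the paper's own argument, stated more conceptually. The paper expands the product $\chi(\tilde V)\chi(\tilde W)$ in $H$ using the explicit character formula (Lemma~\ref{lem:character}), equates it with $\chi(\tilde W)\chi(\tilde V)$ by commutativity of the character ring, and then compares the coefficients of $p_{1_G}\#u$ on both sides; this is exactly applying your Hopf quotient $\pi:H\to\Bbbk F$, $p_g\#f\mapsto\delta_{g,1}f$. Under the induced ring map $R\to\mathbb{Z}[F]$ one has $\chi(\tilde V_f)\mapsto(\dim V)\sum_{z\in T_f}(z^{-1}\triangleright f)=(\dim V)\,[O_f]$, where $[O_f]$ denotes the orbit sum, so commutativity gives $[O_f][O_{f'}]=[O_{f'}][O_f]$ in $\mathbb{Z}[F]$. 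You should then add the (trivial) last step the paper also takes implicitly: equality of these elements of $\mathbb{Z}[F]$ forces equality of their supports, which are precisely the product sets $O_fO_{f'}$ and $O_{f'}O_f$.

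In short: drop the first interpretation, keep the second, and close with the passage from orbit-sum equality to set equality. What you then have is the paper's proof, packaged via the monoidal functor induced by $\pi$ rather than by an explicit coefficient comparison; the gain is brevity, the content is identical.
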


\begin{theorem}
Let $\Bbbk^G{}^\tau\#_\sigma\Bbbk F$ be a coquasitriangular Hopf algebra. For any $f\in F$, if $V = \operatorname{span}\{v^{(1)}, \dots, v^{(m)}\}$ and $W = \operatorname{span}\{w^{(1)}, \dots, w^{(n)}\}$ are simple right comodules over $\Bbbk^{G_f}_{\tau_f}$ and $\k^G$, with coactions $$\rho(v^{(i)})=\sum_{l=1}^m v^{(l)}\otimes (\sum_{g\in G_f}a_{li}^g p_g),$$ and $$\rho(w^{(j)})=\sum_{k=1}^n w^{(k)}\otimes (\sum_{h\in G}b_{kj}^{h} p_{h}),$$
defined for all $1\leq l, i\leq  m, 1\leq k, j\leq n$, and for all $a_{li}^g, b_{kj}^h\in\k$. Then for any $ g\in G_f, z\in T_f$, we have
$\sum\limits_{i=1}^m \sum\limits_{j=1}^n a_{ii}^g b_{jj}^{(z^{-1}gz)\triangleleft(z^{-1}\triangleright f)}=\sum\limits_{i=1}^m \sum\limits_{j=1}^n a_{ii}^{g}b_{jj}^{z^{-1}gz}.$
\end{theorem}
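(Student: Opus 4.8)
The plan is to exploit the standard fact that the category $\Corep(H)$ of finite-dimensional right comodules over a coquasitriangular Hopf algebra $H:=\k^G{}^\tau\#_\sigma\k F$ is braided monoidal, so that the braiding supplies isomorphisms $M\otimes N\cong N\otimes M$ for all objects $M,N$. Since the character of a comodule is invariant under isomorphism and multiplicative under $\otimes$, this forces $\chi_M\,\chi_N=\chi_N\,\chi_M$ as elements of $H$, where $\chi_M\in H$ denotes the character (the sum of the diagonal matrix coefficients) of $M$. I will apply this with $M=\Ind_f(V)$, the simple $H$-comodule induced from the $\k^{G_f}_{\tau_f}$-comodule $V$ in the sense of \cite{YLZZ25}, and with $N=\Ind_e(W)$; since $G_e=G$, the cocycle $\tau_e$ is trivial and the transversal $T_e$ is trivial, this $N$ is simply $W$ with its natural $H$-comodule structure, so that $\chi_W:=\chi_N=\sum_{j=1}^{n}\sum_{h\in G}b_{jj}^{h}\,p_h\in\k^G\subseteq H$. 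The whole content of the theorem is then the coefficient comparison hidden in the identity $\chi_{\Ind_f(V)}\,\chi_W=\chi_W\,\chi_{\Ind_f(V)}$.

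The next step is to write $\chi_{\Ind_f(V)}$ explicitly. Using the description of the induced comodule from \cite{YLZZ25}---as a vector space $\Ind_f(V)=\bigoplus_{z\in T_f}V$, with the coaction on the $z$-th summand obtained by transporting the matrix coefficients $\sum_{g\in G_f}a_{li}^{g}\,p_g$ of $V$ through the crossed-product structure of $H$ attached to $z$ and $f$---one obtains
\[
\chi_{\Ind_f(V)}=\sum_{z\in T_f}\sum_{i=1}^{m}\sum_{g\in G_f}a_{ii}^{g}\,\bigl(p_{\gamma(g,z)}\,\#\,(z^{-1}\triangleright f)\bigr),
\]
where $\gamma(g,z)\in G$ is determined by $g$ and $z$ and the $F$-degree of the $z$-block is $z^{-1}\triangleright f$. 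Here the $2$-cocycle $\tau_f$ contributes only a scalar in $\k^{\times}$ that occurs identically on both sides of the comparison below, hence drops out.

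Now I would carry out both products in $H$. Since $\chi_W$ lies in the commutative subalgebra $\k^G$, the products $\chi_{\Ind_f(V)}\,\chi_W$ and $\chi_W\,\chi_{\Ind_f(V)}$ are computed using only the right and left $\k^G$-module structures of the crossed product $\k^G\#_\sigma\k F$. In exactly one of the two products, reducing a term $\bigl(p_{\gamma(g,z)}\#(z^{-1}\triangleright f)\bigr)\cdot p_h$ or $p_h\cdot\bigl(p_{\gamma(g,z)}\#(z^{-1}\triangleright f)\bigr)$ to normal form applies the matched-pair action of $F$ on $G$ to the index $h$ coming from $\chi_W$, while in the other it does not; this is precisely the source of the discrepancy between the exponents $z^{-1}gz$ and $(z^{-1}gz)\triangleleft(z^{-1}\triangleright f)$. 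Fixing $g\in G_f$ and $z\in T_f$ and reading off, from $\chi_{\Ind_f(V)}\,\chi_W=\chi_W\,\chi_{\Ind_f(V)}$, the coefficient of the basis element of $H$ of $F$-degree $z^{-1}\triangleright f$ and $G$-degree $\gamma(g,z)$ then yields exactly
\[
\sum_{i=1}^{m}\sum_{j=1}^{n}a_{ii}^{g}\,b_{jj}^{(z^{-1}gz)\triangleleft(z^{-1}\triangleright f)}=\sum_{i=1}^{m}\sum_{j=1}^{n}a_{ii}^{g}\,b_{jj}^{z^{-1}gz}.
\]

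The categorical input is immediate, so the real work---and the main obstacle---is the bookkeeping inside the crossed product: (i) writing down the matrix coefficients of $\Ind_f(V)$ precisely from the crossed-product (co)algebra structure of $H$, and checking that the $\tau_f$-factors cancel; (ii) identifying which basis element of $H$ records the $z$-block contribution, namely the one of $F$-degree $z^{-1}\triangleright f$; and, most delicately, (iii) verifying through the matched-pair compatibility axioms, which relate the two actions and conjugation in $G$, that the index transformation occurring in exactly one of the two products is precisely $z^{-1}gz\mapsto(z^{-1}gz)\triangleleft(z^{-1}\triangleright f)$---in particular getting the directions and the inverses of the actions right. I expect step (iii) to demand the most care, as it is where the precise conventions for $\triangleright$, $\triangleleft$ and their interplay with conjugation in $G$ must be pinned down; once they are, the remaining manipulations are routine.
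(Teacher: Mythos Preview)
Your proposal is correct and follows essentially the same approach as the paper: specialize to the induced comodule at $f'=1_F$ (so $G_{f'}=G$, $T_{f'}=\{1\}$, $\tau_{f'}$ trivial), use coquasitriangularity to force $\chi(\tilde V)\chi(\tilde W)=\chi(\tilde W)\chi(\tilde V)$, and compare coefficients of $p_{z^{-1}gz}\#(z^{-1}\triangleright f)$, where the $\tau$-scalars from Lemma~\ref{lem:character} appear identically on both sides and cancel. The only item you leave implicit that the paper makes explicit is $\gamma(g,z)=z^{-1}gz$; once you invoke the character formula of Lemma~\ref{lem:character} this is immediate, and your step~(iii) reduces to the single multiplication rule $(p_{z^{-1}gz}\#(z^{-1}\triangleright f))\,p_h=\delta_{(z^{-1}gz)\triangleleft(z^{-1}\triangleright f),\,h}\,p_{z^{-1}gz}\#(z^{-1}\triangleright f)$ versus $p_h\,(p_{z^{-1}gz}\#(z^{-1}\triangleright f))=\delta_{h,\,z^{-1}gz}\,p_{z^{-1}gz}\#(z^{-1}\triangleright f)$.
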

As a corollary, we can consider when $\Bbbk^G {}^\tau \#_{\sigma} \Bbbk F$ admits a quasitriangular structure in the case where $G$ and $F$ are both finite groups (see, Proposition \ref{prop:QT=OgOg}).

Moreover, we present characterizations of the coquasitriangular structure $R$ on $\Bbbk^G {}^\tau \#_{\sigma} \Bbbk F$ when it is a coquasitriangular Hopf algebra (see Proposition \ref{prop:CQTstrcuture}), and investigate its explicit values on specific elements.
For the specific case
$G=\mathbb{Z}_2$, we characterize the Grothendieck ring of the category of finite-dimensional comodules over
$\Bbbk^{\mathbb{Z}_2}{}^\tau\#_{\sigma}\Bbbk F$ and determine the conditions for its commutativity (see Proposition \ref{prop:Grz2} and Corollary \ref{coro:grz2commu} ). In particular, when
$F$ is an abelian group, we demonstrate that
the coquasitriangular structure on $\Bbbk^{\mathbb{Z}_2}{}^\tau\#_{\sigma}\Bbbk F$ must be one of two forms (see Proposition \ref{prop:FabelianR}).

The organization of this paper is as follows. In Section \ref{Section2}, we recall the definition of a Hopf algebra $\Bbbk^G {}^\tau \#_{\sigma} \Bbbk F$ and describe the category of its right comodules. We devote Section \ref{Section3} to provide some necessary conditions for $\Bbbk^G{}^\tau\#_{\sigma}\Bbbk F$ to be a coquasitriangular Hopf algebra. We further construct matched pairs of groups that fail the necessary conditions, thus providing new examples of cosemisimple Hopf algebras that are not coquasitriangular.
In Section \ref{Section 4}, we characterize the coquasitriangular structures on $\Bbbk^G{}^\tau\#_{\sigma}\Bbbk F$. Finally in Section \ref{Section5}, we apply our general results to the case
$G=\mathbb{Z}_2$, and obtain the characterization of the coquasitriangular structures on $\Bbbk^{\mathbb{Z}_2}{}^\tau\#_{\sigma}\Bbbk F$.

\section{The Hopf algebra $\Bbbk^G{}^\tau\#_{\sigma}\Bbbk F$}\label{Section2}
Throughout this paper, $\k$ denotes an algebraically closed field of characteristic zero, and all vector spaces are defined over $\k$. We denote the tensor product over $\k$ simply by $\otimes$. We shall employ Sweedler's sigma notation for coalgebras (\cite{Swe69}): the coproduct of any element $c$ is written as $\Delta(c) = \sum c_{(1)} \otimes c_{(2)}$. The reader is referred to \cite{Mon93}, \cite{Swe69} and \cite{EGNO15} for the basics about Hopf algebras and tensor categories.
\subsection{The definition of $\Bbbk^G{}^\tau\#_{\sigma}\Bbbk F$}
\begin{definition}\emph{(}\cite{AD95}\emph{)}
A short exact sequence of Hopf algebras is a sequence of Hopf algebras
and Hopf algebra maps
\begin{eqnarray}\label{ext}
\;\; K\xrightarrow{\iota} H \xrightarrow{\pi} A
\end{eqnarray}
such that
\begin{itemize}
  \item[(i)] $\iota$ is injective,
  \item[(ii)]  $\pi$ is surjective,
  \item[(iii)] $\ker(\pi)= HK^+$, where $K^+$ is the kernel of the counit of $K$,
  \item[(iv)] $K=\{x\in H \mid (\pi\otimes \id)\Delta (x)=1\otimes x\}$.
\end{itemize}
\end{definition}
An extension (\ref{ext}) above such that $K$ is commutative and $A$ is cocommutative is called abelian. The classification of abelian extensions $\k^G \xrightarrow{\iota} H \xrightarrow{\pi} \k F$, where $G$ and $F$ are finite groups and $\k^G$ is the dual Hopf algebra of $\k G$, was given by Masuoka \cite[Proposition 1.5]{Mas02}. Masuoka showed that any such $H$ is isomorphic to $\k^G \#_{\sigma, \tau} \k F$. Subsequently, the authors of \cite{YLZZ25} removed the finiteness restriction on $F$, thereby constructing a class of infinite-dimensional cosemisimple Hopf algebras $\Bbbk^G {}^\tau \#_{\sigma} \Bbbk F$ as abelian extensions of $\k F$ by $\k^G$.

In the following part, let $G$ be a finite group and $F$ an arbitrary group. Let $\{p_g \mid g \in G\}$ denote the canonical basis of $\k^G$ dual to the basis of group elements in $\k G$.
We now recall the Hopf algebra structure on $\Bbbk^G {}^\tau \#_{\sigma} \Bbbk F$, as established in \cite{YLZZ25}, which requires the following data:
\begin{itemize}
  \item [(1)]A matched pair of groups (\cite[Definition 2.1]{Tak81}), i.e., a quadruple $(F, G,\triangleleft, \triangleright )$, where $G\xleftarrow{\triangleleft} G\times F\xrightarrow{\triangleright} F$
are actions of groups on sets, satisfying the following conditions
 \begin{eqnarray*}
 g\triangleright(ff^\prime)=(g\triangleright f)((g\triangleleft f)\triangleright f^\prime),\;\;(gg^\prime)\triangleleft f=(g\triangleleft (g^\prime \triangleright f))(g^\prime\triangleleft f),
\end{eqnarray*}
for any $g, g^\prime\in G$, $f, f^\prime\in F$.
\item[(2)]A map $\sigma: \k G\times \k F\times \k F\rightarrow \k^{\times}$ is defined to satisfy the identities
\begin{eqnarray*}
\sigma(g; 1_F, f)=\sigma(g; f, 1_F)=\sigma(1_G; f, f^\prime)=1,
\end{eqnarray*}
and
\begin{eqnarray*}
\sigma(g\triangleleft f; f^\prime, f^{\prime\prime})\sigma(g; f, f^\prime f^{\prime\prime})=\sigma(g; f, f^\prime)\sigma(g; ff^\prime, f^{\prime\prime}),
\end{eqnarray*}
for any $g\in G$ and $f, f^\prime ,f^{\prime \prime}\in F.$
\item[(3)]A map $\tau :\k G\times \k G\times \k F\rightarrow \k^{\times}$ satisfying
\begin{eqnarray*}
\tau(1_G, g; f)=\tau(g, 1_G; f)=\tau(g, g^\prime; 1_F)=1,
\end{eqnarray*}
and
\begin{eqnarray*}
\tau(g, g^\prime; g^{\prime\prime}\triangleright f)\tau(gg^\prime, g^{\prime\prime}; f)=\tau(g, g^\prime g^{\prime\prime}; f)\tau(g^\prime, g^{\prime\prime}; f),
\end{eqnarray*}
for any $g, g^\prime, g^{\prime\prime}\in G$ and $f\in F.$
Moreover, $\sigma$ and $\tau$ satisfy the following compatible condition
  \begin{eqnarray*}
  &&\sigma(gg^\prime; f, f^\prime)\tau(g, g^\prime; ff^\prime)\\
  &=&\sigma(g; g^\prime \triangleright f, (g^\prime \triangleleft f)\triangleright f^\prime)\sigma(g^\prime; f, f^\prime)
  \tau(g, g^\prime; f)\tau(g\triangleleft (g^\prime \triangleright f), g^\prime\triangleleft f; f^\prime),
  \end{eqnarray*}
  for $g, g^\prime, g^{\prime\prime}\in G$ and $f, f^\prime, f^{\prime\prime}\in F$.
\end{itemize}
\begin{remark}\label{lem:actioninverse}\emph{(}\cite[Lemma 2.10]{YLZZ25}\emph{)}
Suppose $(F, G,\triangleleft, \triangleright )$ is a matched pair of groups. Then for any $f\in F, g\in G$, the following properties hold:
\begin{itemize}
  \item [(1)]$1_G \triangleright f =f$, $1_G \triangleleft f= 1_G$;
  \item [(2)]$g\triangleright 1_F=1_F$, $g\triangleleft 1_F=g$;
  \item [(3)]$(g\triangleright f)^{-1}=(g\triangleleft f)\triangleright f^{-1}$;
  \item [(4)]$(g\triangleleft f)^{-1}=g^{-1}\triangleleft (g\triangleright f)$.
\end{itemize}
\end{remark}
\begin{definition}\emph{(}\cite[Proposition 3.2]{YLZZ25}\emph{)}\label{def:H}
Based on the preceding notations, the Hopf algebra $\Bbbk^G {}^\tau \#_{\sigma} \Bbbk F$ coincides with $\k^G\otimes \k F$ as a vector space, and we denote the element $p_g\otimes f$ by $p_g\# f.$ For any $g, g^\prime\in G$ and $f, f^\prime \in F,$ the multiplication and comultiplication are defined as follows:
\begin{eqnarray*}
&&(p_g\# f)(p_{g^\prime}\# f^\prime)
=\delta_{g\triangleleft f, g^\prime}\sigma(g; f, f^\prime) p_{g}\#ff^\prime,\\
&&\Delta(p_g\# f)
=\sum_{x\in G}(\tau(gx^{-1}, x; f) p_{gx^{-1}} \#(x\triangleright f))\otimes ( p_x\# f).
\end{eqnarray*}
The unit is $ 1_{\k^G}\#  1_F$, the counit is given by \begin{eqnarray*}
\varepsilon(p_g\# f)=\delta_{g,1_G},
\end{eqnarray*}
and the antipode is
\begin{eqnarray*}
S(p_g\# f)=\sigma(g^{-1}; g\triangleright f, (g\triangleright f)^{-1})^{-1}\tau(g^{-1}, g; f)^{-1}p_{(g\triangleleft f)^{-1}}\# (g\triangleright f)^{-1}.
\end{eqnarray*}
\end{definition}

\begin{remark}
\begin{itemize}
\item[(1)]
Suppose $\sigma$ and $\tau$ are trivial, that is, $\tau(g, g^\prime; f)=\sigma(g; f, f^\prime)=1$ for any $g, g^\prime\in G, f, f^\prime\in F$.  In this case, we simply write $\Bbbk^G  \# \Bbbk F$ for $\Bbbk^G {}^\tau \#_{\sigma} \Bbbk F$.
\item[(2)] The abelian extension $\k^G \xrightarrow{\iota} \Bbbk^G {}^\tau \#_{\sigma} \Bbbk F \xrightarrow{\pi} \k F$ is said to be \textit{central}, if $\k^G$ is contained in the center of $\Bbbk^G {}^\tau \#_{\sigma} \Bbbk F$. An equivalent condition is that $g\triangleleft f=g$ for any $g\in G, f\in F.$
\end{itemize}
\end{remark}
\begin{lemma}\emph{(}\cite[Proposition 3.5]{YLZZ25}\emph{)}\label{lem:cose}
The Hopf algebra $\k^G{}^\tau\#_{\sigma}\k F$ given in Definition \ref{def:H} is cosemisimple.
\end{lemma}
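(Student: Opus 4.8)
The plan is to exhibit a normalized two-sided integral on $H := \k^G{}^\tau\#_\sigma\k F$ and to invoke the integral criterion for cosemisimplicity: a Hopf algebra (of arbitrary dimension) is cosemisimple if and only if there is a functional $\lambda \in H^*$ with $\lambda(1_H) = 1$ which is a left integral, i.e. $\sum h_{(1)}\lambda(h_{(2)}) = \lambda(h)1_H$ for all $h \in H$ (see \cite[Chapter 2]{Mon93} or \cite{Swe69}); such a $\lambda$ is then automatically two-sided. The idea for the candidate is that the normalized integral on the sub-Hopf-algebra $\k^G$ is $p_g \mapsto |G|^{-1}$, while the integral on the quotient coalgebra $\k F$ is $f \mapsto \delta_{f,1_F}$; accordingly I would set
\[
  \lambda(p_g\#f) \;=\; \tfrac{1}{|G|}\,\delta_{f,1_F}, \qquad g\in G,\ f\in F,
\]
where $|G|^{-1}$ makes sense because $\k$ has characteristic zero.

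Next I would check that $\lambda$ is normalized and is a left integral. Normalization is immediate: $\lambda(1_H) = \lambda\big(\sum_{g}p_g\#1_F\big) = \sum_g |G|^{-1} = 1$. For the integral identity, apply the coproduct from Definition \ref{def:H}:
\[
  \sum (p_g\#f)_{(1)}\,\lambda\big((p_g\#f)_{(2)}\big)
  = \sum_{x\in G}\tau(gx^{-1},x;f)\,\big(p_{gx^{-1}}\#(x\triangleright f)\big)\,\lambda(p_x\#f).
\]
Since $\lambda(p_x\#f) = |G|^{-1}\delta_{f,1_F}$, only the term $f = 1_F$ survives; and for $f = 1_F$ one has $x\triangleright 1_F = 1_F$ and $\tau(gx^{-1},x;1_F) = 1$ by the normalizations of the action and of $\tau$ (Remark \ref{lem:actioninverse}(2) together with the defining identities of $\tau$). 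Hence the right-hand side collapses to $|G|^{-1}\delta_{f,1_F}\sum_{x\in G}p_{gx^{-1}}\#1_F = |G|^{-1}\delta_{f,1_F}\,(1_{\k^G}\#1_F) = \lambda(p_g\#f)\,1_H$, as required. A symmetric computation, this time using that $\triangleright$ is a group action of $G$ on the set $F$ with $g\triangleright 1_F = 1_F$ — so that $x\triangleright f = 1_F$ forces $f = x^{-1}\triangleright 1_F = 1_F$ — verifies the right integral identity $\sum\lambda\big((p_g\#f)_{(1)}\big)\,(p_g\#f)_{(2)} = \lambda(p_g\#f)\,1_H$, although the left identity alone already yields cosemisimplicity.

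The verification is essentially a bookkeeping exercise, so there is no serious obstacle; the only points requiring care are (i) selecting the correct functional, which I read off from the integrals of the building blocks $\k^G$ and $\k F$, and (ii) confirming that all cocycle factors $\tau(\cdot,\cdot;1_F)$ and action terms $x\triangleright 1_F$ degenerate to their trivial values, which is exactly guaranteed by the normalization axioms on $\tau$ and by Remark \ref{lem:actioninverse}. Conceptually, this reflects the general principle that an extension of cosemisimple Hopf algebras is again cosemisimple, the Haar integral on $H$ being the product of the Haar integrals on $\k^G$ and on $\k F$; I would nonetheless present the explicit integral above in order to keep the argument self-contained and valid with no finiteness hypothesis on $F$.
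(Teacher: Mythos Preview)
Your argument is correct: the functional $\lambda(p_g\#f)=|G|^{-1}\delta_{f,1_F}$ is a normalized left (and right) integral, and the verification you give goes through exactly as written, the key inputs being $\tau(-,-;1_F)=1$ and $x\triangleright 1_F=1_F$. Note, however, that the present paper does not actually prove this lemma --- it merely quotes \cite[Proposition~3.5]{YLZZ25} --- so there is no in-text proof to compare against; the explicit-integral method you use is the standard one and is almost certainly what appears in the cited reference.
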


\subsection{The comodules over $\Bbbk^G{}^\tau\#_{\sigma}\Bbbk F$}
In this subsection, we present some results from \cite{YLZZ25} concerning the simple right comodules over  $\k^G{}^\tau\#_{\sigma}\k F$.

Drawing on the work of \cite{JM09}, we adopt the following definitions.
\begin{definition}
Suppose $(F, G,\triangleleft, \triangleright )$ is a matched pair of groups. For any $f, f^\prime\in F$, we define the followings:
\begin{itemize}
  \item [(1)]The orbits $O_{f}=\{g\triangleright f\mid g\in G\}$;
  \item[(2)]The sets $O_fO_{f^\prime}=\{xy\mid x\in O_f, y\in O_{f^\prime}\};$
  \item [(3)]The stabilizers $G_{f}=\{g\in G\mid g\triangleright f=f\}$;
  \item [(4)]A complete set $T_f$ of right coset representatives of $G_f$ in $G$. For convenience, we assume $1_G\in T_f.$
\end{itemize}
\end{definition}
Recall that the \textit{cotensor product} (see \cite{Tak77}) of a right comodule $M$ and a left comodule $N$ over a coalgebra $C$ is defined as
\begin{eqnarray*}
M \Box_C N = \ker( M\otimes N \xrightarrow{\rho_M\otimes \id-\id\otimes \rho_N} M\otimes C\otimes N),
\end{eqnarray*}
where $\rho_M$ and $\rho_N$ are the respective comodule structure maps.

For any $f\in F$, let $\k^{G_f}{}^\tau\#\k F$ denote the linear space spanned by $\{p_g\# f^\prime\mid g\in G_f, f^\prime\in F\}$. This space can be endowed with a coalgebra structure via the crossed coproduct, whose comultiplication and counit are given by
$$
\Delta(p_g\# f^\prime)=\sum_{x\in G_f}(\tau(gx^{-1}, x; f^\prime) p_{gx^{-1}} \#(x\triangleright f^\prime))\otimes ( p_x\# f^\prime),
$$
and
$$
\varepsilon(p_g\# f^\prime)=\delta_{g,1_G},
$$
where $g\in G_f, f^\prime\in F$.

It was shown in \cite{YLZZ25} that all simple right $\k^G{}^\tau\#_{\sigma}\k F$-comodules can be described as induced comodules from $\k_{\tau_f}^{G_f}$, where $\k_{\tau_f}^{G_f}$ is the coalgebra over $\k^{G_f}$ with the revised comultiplication
$$\Delta(p_g)=\sum_{x\in G_f}\tau(gx^{-1}, x; f) p_{gx^{-1}}\otimes p_{x}.$$
We now state the following lemma.
\begin{lemma}\emph{(}\cite[Lemma 4.2 and Theorem 4.3]{YLZZ25}\emph{)}\label{lem:simplecomod}
Let $H=\k^G{}^\tau\#_{\sigma}\k F$, and fix an element $f\in F.$ Let $H^\prime=\k^{G_f}{}^\tau\#\k F$ and $\k_{\tau_f}^{G_f}$ be the coalgebras defined above.
\begin{itemize}
  \item [(1)]Let $(V, \rho)$ be a right $\k_{\tau_f}^{G_f}$-comodule, and let $V^\prime=V\otimes \k f$. Then $V^\prime$ becomes a right $H^\prime$-comodule by defining, for each $v\in V$,
      $$
      \rho^\prime(v\otimes f)=\sum_{g\in G_{f}} v_g\otimes f\otimes p_g\# f,
      $$
      where $\rho(v)=\sum_{g\in G_{f}} v_g\otimes p_g.$
  \item [(2)]Let $\tilde{V}:=V^\prime \Box_{H^\prime} H=(V\otimes \k f)\Box_{H^\prime} H$. Then we may write $\tilde{V}=\sum_{z\in T_f} V\otimes \k f\otimes z.$ Moreover, $\tilde{V}$ becomes a right $H$-comodule whose comodule structure is induced by the comultiplication of $H$.
      For any $x\in G$, suppose $x=g_xz_x,$ where $g_x\in G_f$ and $z_x\in T_f$. Then the comodule structure on $\tilde{V}$ is given by the following formula:
      \begin{eqnarray*}
      &&\tilde{\rho}(v\otimes f\otimes z)\\
      &=&\sum_{x\in G} \tau(z_x^{-1},g_x^{-1};f)^{-1}\tau(z_x^{-1}g_x^{-1}z, z^{-1};f)v_{g_x^{-1}}\otimes f\otimes z_x \otimes p_{x^{-1}z}\#(z^{-1}\triangleright f),
      \end{eqnarray*}
      for any $v\in V$ and $z\in T_f$.
    \item [(3)]If $V$ is a simple right $\k_{\tau_f}^{G_f}$-comodule, then $\tilde{V}$ is a simple right $H$-comodule.
  \item [(4)]Every simple right $H$-comodule is isomorphic to $\tilde{V}$ for some simple right $\k_{\tau_f}^{G_f}$-comodule, where $f$ ranges over a choice of one element in each $G$-orbit of $F$.
\end{itemize}
\end{lemma}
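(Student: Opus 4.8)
The plan is to argue entirely at the level of coalgebras, since the comodule theory does not see the cocycle $\sigma$. The starting point is a block decomposition of the coalgebra underlying $H$ indexed by the $G$-orbits on $F$: for a $G$-orbit $\mathcal{O}\subseteq F$ the subspace $H_{\mathcal{O}}=\span\{p_g\#f'\mid g\in G,\ f'\in\mathcal{O}\}$ is a subcoalgebra, because in $\Delta(p_g\#f')$ the right-hand leg keeps the $F$-component $f'$ while the left-hand leg moves it to $x\triangleright f'\in O_{f'}=\mathcal{O}$; hence $H=\bigoplus_{\mathcal{O}}H_{\mathcal{O}}$ as coalgebras, and each $H_{\mathcal{O}}$ is finite dimensional since $G$ and $O_f$ are finite. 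Consequently every simple right $H$-comodule lives in exactly one block $H_{\mathcal{O}}$, and after fixing one $f$ per orbit it suffices to understand $H_{O_f}$. In the same spirit one verifies that $\pi\colon H\to H'=\k^{G_f}{}^\tau\#\k F$, $p_g\#f'\mapsto\delta_{g\in G_f}\,p_g\#f'$, is a surjection of coalgebras (using that $G_f$ is a subgroup of $G$), and that $\k_{\tau_f}^{G_f}$ is precisely the subcoalgebra $\span\{p_g\#f\mid g\in G_f\}$ of $H'$; here the defining identity $x\triangleright f=f$ for $x\in G_f$ is what collapses the $\tau$-identity of $H'$ to the one defining $\k_{\tau_f}^{G_f}$.

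For part (1) I would verify coassociativity and counitality of $\rho'$ on $V'=V\otimes\k f$ directly: because $x\triangleright f=f$ for $x\in G_f$, the comultiplication of $H'$ introduces no $F$-movement on the relevant subcoalgebra, so both axioms reduce verbatim to the $\k_{\tau_f}^{G_f}$-comodule axioms for $\rho$, the counit identity using $\varepsilon(p_g\#f)=\delta_{g,1_G}$. For part (2), equip $H$ with the left $H'$-comodule structure $(\pi\otimes\id_H)\Delta_H$; since $\Delta_H$ is then a morphism of left $H'$-comodules, the cotensor product $\tilde V=V'\Box_{H'}H$ is automatically a right $H$-subcomodule of $V'\otimes H$ under $\id_{V'}\otimes\Delta_H$. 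To make this concrete I would (i) write out the equalizer condition $(\rho'\otimes\id_H)(\xi)=(\id_{V'}\otimes(\pi\otimes\id_H)\Delta_H)(\xi)$ for $\xi\in V'\otimes H$; (ii) note that the $H'$-leg produced by $\rho'$ always carries $F$-component $f$, which forces solutions to be supported on tensors $v\otimes f\otimes(p_?\#f'')$ with the $G$-part and $f''$ tightly constrained; (iii) use the coset decomposition $G=\bigsqcup_{z\in T_f}G_fz$ to obtain a basis of the solution space parametrised by $z\in T_f$, written $v\otimes f\otimes z$; and (iv) apply $\id_{V'}\otimes\Delta_H$ to these basis vectors, split the summation index as $x=g_xz_x$ with $g_x\in G_f$ and $z_x\in T_f$, and simplify the accumulated $\tau$-factors using the $\tau$-cocycle identities together with Remark~\ref{lem:actioninverse}(3)--(4) for the inverses $(g\triangleleft f)^{-1}$ and $(g\triangleright f)^{-1}$; this should reproduce the displayed formula for $\tilde\rho$.

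For parts (3) and (4) the clean route is to upgrade Step 2 to an isomorphism of coalgebras $H_{O_f}\cong\k_{\tau_f}^{G_f}\otimes\M^c$, where $\M^c$ is the $|O_f|\times|O_f|$ comatrix coalgebra (the linear dual of $\M_{|O_f|}(\k)$, with $|O_f|=|T_f|$), realised through exactly the change of basis $\{p_{hz'}\#(z\triangleright f)\mid h\in G_f,\ z,z'\in T_f\}$ that emerges in Step 2. Such a factorisation makes $H_{O_f}$ co-Morita equivalent to $\k_{\tau_f}^{G_f}$, with the equivalence sending a simple $\k_{\tau_f}^{G_f}$-comodule $V$ to $V$ with multiplicity $|O_f|$, that is, exactly to $\tilde V$; since a co-Morita equivalence preserves and reflects simplicity and $\End_H(\tilde V)\cong\End_{\k_{\tau_f}^{G_f}}(V)=\k$ for $V$ simple, this gives (3), and since every simple $H$-comodule lies in some block $H_{O_f}$, ranging $f$ over orbit representatives gives (4). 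Alternatively, one can argue by hand using cosemisimplicity (Lemma~\ref{lem:cose}): projecting the formula for $\tilde\rho$ onto the $z=1_G$ summand recovers a $\k_{\tau_f}^{G_f}$-subcomodule of $V$, which must be all of $V$, and re-applying $\tilde\rho$ then reaches every $z$-component, so $\tilde V$ has no proper nonzero subcomodule; attaching the same eigenspace construction to an arbitrary simple $H$-comodule in $H_{O_f}$ produces the $V$ for which it is $\tilde V$.

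The main obstacle is the bookkeeping in Step 2: propagating all three cocycle functions through the cotensor-product computation and the reindexing $x=g_xz_x$ so as to land on precisely the normalisation appearing in the statement. For Step 3 the conceptual content is short, but one still has to check that the coalgebra isomorphism $H_{O_f}\cong\k_{\tau_f}^{G_f}\otimes\M^c$ is well defined --- again a cocycle verification --- or, if one prefers the soft argument, that $H$ is faithfully coflat over $H'$, which is immediate from cosemisimplicity.
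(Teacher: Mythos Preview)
This lemma is stated in the paper without proof --- it is quoted verbatim from \cite[Lemma 4.2 and Theorem 4.3]{YLZZ25} --- so there is no in-paper argument to compare your proposal against. Your outline is nonetheless mathematically sound: the orbit-block decomposition $H=\bigoplus_{\mathcal O}H_{\mathcal O}$ as a coalgebra, the cotensor-product realisation of $\tilde V$ as a right $H$-subcomodule of $V'\otimes H$ via $(\pi\otimes\id_H)\Delta_H$, and the co-Morita identification $H_{O_f}\cong\k_{\tau_f}^{G_f}\otimes\M^c$ (with $\M^c$ the $|O_f|\times|O_f|$ comatrix coalgebra) together constitute the standard Clifford-theoretic proof of such induction results for comodules, and your caveats about the $\tau$-bookkeeping in part~(2) are well placed.
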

Let $C$ be a coalgebra over $\k$ and $(M, \rho)$ be a right $C$-comodule. Pick a basis $\{m_1, m_2, \cdots, m_n\}$ for $M$ and let $\{m_1^*, m_2^*, \cdots, m_n^*\}$ be the basis of $W^*$ dual to this basis. The \textit{character} $\chi(M)$ (\cite[Section 1]{Lar71}) of the comodule $M$ is defined by
$$
\chi(M)=\sum_{i=1}^n (m_i^*\otimes \id)\rho(m_i).
$$
If $M$ is a simple right $C$-comodule, then $\chi(M)$ is called an \textit{irreducible character}.

\begin{lemma}\emph{(}\cite[Proposition 5.2]{YLZZ25}\emph{)}\label{lem:character}
Let $H=\k^G{}^\tau\#_{\sigma}\k F$, $V=\span\{v^{(1)}, \cdots, v^{(m)}\}$ be a simple right $\k_{\tau_f}^{G_f}$-comodule and $\tilde{V}=(V\otimes \k f)\Box_{\k^{G_f}{}^\tau\#\k F} H$ be the induced right $H$-comodule. Suppose for any $1\leq i\leq m$, we have $$\rho(v^{(i)})=\sum_{j=1}^m v^{(j)}\otimes (\sum_{g\in G_f}a_{ji}^g p_g),$$
where $a_{ij}^g\in \k$ for any $1\leq i, j\leq m$ and $g\in G_f.$ Then the irreducible character $\chi(\tilde{V})$ of $\tilde{V}$ is
$$\sum\limits_{i=1}^m\sum\limits_{z\in T_f}\sum\limits_{g\in G_f}\tau(z^{-1},g;f)^{-1}\tau(z^{-1}gz, z^{-1};f)a_{ii}^{g}p_{z^{-1}gz}\#(z^{-1}\triangleright f).$$
\end{lemma}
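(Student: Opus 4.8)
The plan is to compute $\chi(\tilde V)$ straight from its definition, feeding in the explicit comodule structure recorded in Lemma~\ref{lem:simplecomod}(2). Since $\tilde V=\sum_{z\in T_f}V\otimes\k f\otimes z$, the set $\{v^{(i)}\otimes f\otimes z : 1\le i\le m,\ z\in T_f\}$ is a basis of $\tilde V$, so
\[
\chi(\tilde V)=\sum_{i=1}^m\sum_{z\in T_f}\bigl((v^{(i)}\otimes f\otimes z)^*\otimes\id\bigr)\,\tilde\rho(v^{(i)}\otimes f\otimes z).
\]
The first step is to rephrase the hypothesis in the notation of Lemma~\ref{lem:simplecomod}(2): writing $\rho(v)=\sum_{g\in G_f}v_g\otimes p_g$ for $v\in V$, the assumption $\rho(v^{(i)})=\sum_{j=1}^m v^{(j)}\otimes\bigl(\sum_{g\in G_f}a_{ji}^g p_g\bigr)$ says exactly that $(v^{(i)})_g=\sum_{j=1}^m a_{ji}^g v^{(j)}$, so that the coefficient of $v^{(i)}$ in $(v^{(i)})_{g_x^{-1}}$ is $a_{ii}^{g_x^{-1}}$.

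Next I would substitute $v=v^{(i)}$ into the coaction formula of Lemma~\ref{lem:simplecomod}(2) and apply the functional $(v^{(i)}\otimes f\otimes z)^*\otimes\id$. In that formula the sum is over $x\in G$ written as $x=g_xz_x$ with $g_x\in G_f$, $z_x\in T_f$, and the summand lies in $V\otimes\k f\otimes z_x\otimes H$; hence the functional kills every term except those with $z_x=z$, and on the $V$-factor it extracts the coefficient $a_{ii}^{g_x^{-1}}$. The condition $z_x=z$ forces $x$ to run over $\{gz : g\in G_f\}$, and for such $x$ one has $z_x^{-1}=z^{-1}$, $z_x^{-1}g_x^{-1}z=z^{-1}g^{-1}z$, and $x^{-1}z=z^{-1}g^{-1}z$, so the contribution of the pair $(i,z)$ collapses to
\[
\sum_{g\in G_f}\tau(z^{-1},g^{-1};f)^{-1}\tau(z^{-1}g^{-1}z,z^{-1};f)\,a_{ii}^{g^{-1}}\,p_{z^{-1}g^{-1}z}\#(z^{-1}\triangleright f).
\]

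Finally I would reindex the inner sum over $G_f$ by the bijection $g\mapsto g^{-1}$, which rewrites this expression as $\sum_{g\in G_f}\tau(z^{-1},g;f)^{-1}\tau(z^{-1}gz,z^{-1};f)\,a_{ii}^{g}\,p_{z^{-1}gz}\#(z^{-1}\triangleright f)$; summing over $i$ and $z$ gives the claimed formula. The argument is essentially bookkeeping: the only places that demand care are matching the index conventions for $a_{ji}^g$ against the components $v_g$ in Lemma~\ref{lem:simplecomod}(2), and correctly tracking the arguments of the cocycle $\tau$ once $z_x$ is specialized to $z$ and $x$ to $g_xz$. I do not expect a substantive obstacle beyond this careful index tracking.
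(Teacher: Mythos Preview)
Your proposal is correct. The paper does not give its own proof of this lemma---it merely quotes the result from \cite[Proposition~5.2]{YLZZ25}---so there is no in-paper argument to compare against; your direct computation from the definition of the character together with the explicit coaction formula in Lemma~\ref{lem:simplecomod}(2) is exactly the natural route, and your index bookkeeping (in particular the identification $(v^{(i)})_g=\sum_j a_{ji}^g v^{(j)}$, the restriction $z_x=z$ forcing $x=g_xz$ with $g_x\in G_f$, and the final reindexing $g\mapsto g^{-1}$) is accurate.
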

\section{Necessary Conditions for $\Bbbk^G{}^\tau\#_{\sigma}\Bbbk F$ to be coquasitriangular}\label{Section3}
In this section, let $G$ be a finite group and $F$ an arbitrary group.
The aim of this section is to provide some necessary conditions for $\Bbbk^G{}^\tau\#_{\sigma}\Bbbk F$ to be a coquasitriangular Hopf algebra. Moreover, we construct matched pairs of groups that fail to satisfy the necessary conditions, thereby obtaining new examples of cosemisimple Hopf algebras which are not coquasitriangular.

Recall that a \textit{coquasitriangular} Hopf algebra (\cite{Mon93}) is a pair $(H, R)$, where $H$ is a Hopf algebra over $\Bbbk$ and $R: H\otimes H\rightarrow \Bbbk$ is a convolution-invertible bilinear form on $H$ satisfying the following axioms:
\begin{eqnarray}\label{CQT1}
R(g,hl)=\sum R(g_{(1)}, l)R(g_{(2)}, h), \;\; R(gh,l)=\sum R(g, l_{(1)})R(h, l_{(2)})
\end{eqnarray}
and
\begin{eqnarray}\label{CQT3}
\sum R(g_{(1)}, h_{(1)})g_{(2)}h_{(2)}=\sum h_{(1)}g_{(1)} R(g_{(2)}, h_{(2)}),
\end{eqnarray}
for any $g, h, l\in H.$ If
\begin{eqnarray}\label{CQT4}
\sum R(g_{(1)}, h_{(1)}) R(h_{(2)}, g_{(2)})=\varepsilon (g) \varepsilon (h),
\end{eqnarray}
then $(H, R)$ is called \textit{cotriangular}.

To proceed, we recall the following lemma.
\begin{lemma}\emph{(}\cite[Theorem 10.4.2]{Mon93}\emph{)}\label{lem:braided}
Let $H$ be a Hopf algebra over $\k$. Then $(H, R)$ is coquasitriangular if and only if the category of right $H$-comodules is a braided monoidal category.
\end{lemma}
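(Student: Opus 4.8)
The plan is to exhibit a bijection between the convolution-invertible bilinear forms $R\colon H\otimes H\to\k$ satisfying \eqref{CQT1}--\eqref{CQT3} and the braidings on the monoidal category $\mathcal{M}^H$ of right $H$-comodules, whose tensor product carries the coaction $\rho_{M\otimes N}(m\otimes n)=\sum m_{(0)}\otimes n_{(0)}\otimes m_{(1)}n_{(1)}$ and whose unit object is $\k$ with trivial coaction; the lemma then follows at once, since $H$ is coquasitriangular exactly when $\mathcal{M}^H$ admits a braiding. In one direction, from such an $R$ I would build, for every pair $M,N$, the map
\[
c^{R}_{M,N}\colon M\otimes N\longrightarrow N\otimes M,\qquad c^{R}_{M,N}(m\otimes n)=\sum R(m_{(1)},n_{(1)})\,n_{(0)}\otimes m_{(0)};
\]
in the other, from a braiding $c$ I would use the right regular comodule $(H,\Delta)$ and set $R^{c}(g,h)=(\varepsilon\otimes\varepsilon)\bigl(c_{H,H}(g\otimes h)\bigr)$.

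For the assignment $R\mapsto c^{R}$ the verifications are all bookkeeping with Sweedler notation: (i) $c^{R}_{M,N}$ is a morphism of right $H$-comodules --- comparing $\rho_{N\otimes M}\circ c^{R}_{M,N}$ with $(c^{R}_{M,N}\otimes\id_H)\circ\rho_{M\otimes N}$ and cancelling with the counit reduces this precisely to \eqref{CQT3}; (ii) naturality in $M$ and $N$ is automatic, since comodule morphisms intertwine the coactions while $R$ is a fixed form on $H$; (iii) the two hexagon identities for $c^{R}$ unwind, after applying $\varepsilon\otimes\varepsilon$ to the appropriate tensor legs, to exactly the two identities in \eqref{CQT1}; (iv) $c^{R}_{M,N}$ is invertible, with inverse $(n\otimes m)\mapsto\sum\bar{R}(m_{(1)},n_{(1)})\,m_{(0)}\otimes n_{(0)}$, where $\bar R$ is the convolution inverse of $R$. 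Hence $c^{R}$ is a braiding on $\mathcal{M}^H$.

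For the reverse assignment the crucial point --- and what I expect to be the main obstacle --- is to show that an arbitrary braiding $c$ is entirely determined by its value $c_{H,H}$ on the regular comodule, namely that $c_{M,N}=c^{R^{c}}_{M,N}$ for all $M,N$. For this I would use that the coaction map $\rho_M\colon M\to\underline{M}\otimes H$ is $H$-colinear, where $\underline M$ denotes the underlying vector space of $M$ with trivial coaction and $H$ carries the regular coaction (so that the coaction on $\underline M\otimes H$ coincides with the tensor-product one); combined with the facts that the braiding of the unit object with any object is the canonical isomorphism and that $c$ is natural with respect to the colinear maps identifying $\underline M\otimes H$ with a direct sum of copies of $H$, chasing the naturality square of $c$ for the colinear map $\rho_M\otimes\rho_N$ and then composing with the counits forces $c_{M,N}(m\otimes n)=\sum R^{c}(m_{(1)},n_{(1)})\,n_{(0)}\otimes m_{(0)}$.

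Granting this, the remaining axioms for $R^{c}$ follow by specialisation: the hexagon identities for $c$, rewritten through the previous step so as to involve only $c_{H,H}$ and $R^{c}$, yield \eqref{CQT1} once all three objects are taken to be $H$ and the counit is applied; the $H$-colinearity of $c_{H,H}$, read off after applying counits, yields \eqref{CQT3}; and the invertibility of $c_{H,H}$ forces $R^{c}$ to be convolution-invertible. Thus $(H,R^{c})$ is coquasitriangular. Finally $R^{c^{R}}=R$ is immediate from $(\varepsilon\otimes\varepsilon)\circ c^{R}_{H,H}=R$, and $c^{R^{c}}=c$ is precisely the reconstruction step above, so the two assignments are mutually inverse.
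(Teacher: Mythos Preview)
The paper does not supply its own proof of this lemma: it is quoted verbatim as \cite[Theorem 10.4.2]{Mon93} and used as a black box. Your argument is the standard one found in the literature (and essentially the one in Montgomery's book), so there is nothing to compare --- your sketch is correct and more detailed than anything the paper offers.
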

Let $\mathbb{Z}_+$ be the set of nonnegative integers, $A$ be an associative ring with unit which is free as a $\mathbb{Z}$-module.
Recall that a $\mathbb{Z}_+$\textit{-basis} of $A$ is a basis $B=\{b_{i}\}_{i\in I}$ such that $b_ib_j=\sum_{k\in I}c_{ij}^kb_k$, where $c_{ij}^k\in\mathbb{Z}_+$. A $\mathbb{Z}_+$\textit{-ring} $A$ is a ring with a fixed $\mathbb{Z}_+$-basis, and $A$ is a \textit{unital} $\mathbb{Z}_+$-ring if in addition $1$ is a basis element.

Suppose $H$ is a cosemisimple Hopf algebra over $\k$. Let $\mathcal{F}$ be the free abelian group generated by isomorphism classes of finite-dimensional right $H$-comodules and $\mathcal{F}_0$ the subgroup of $\mathcal{F}$ generated by all expressions $[Y]-[X]-[Z]$, where $0\rightarrow X\rightarrow Y\rightarrow Z\rightarrow0$ is a short exact sequence of finite-dimensional right $H$-comodules.
Recall that the \textit{Grothendieck group} $\operatorname{Gr}(H$-comod$)$ of the category of finite-dimensional right $H$-comodules is defined by $$\operatorname{Gr}(H\text{-comod}):=\mathcal{F}/\mathcal{F}_0.$$

Then \cite[Proposition 4.5.4]{EGNO15} implies that the Grothendieck ring $\operatorname{Gr}(H\text{-comod})$ is a unital $\Bbb{Z}_+$-ring. Its $\mathbb{Z}_+$-basis $\mathcal{V}$ is given by the isomorphism classes of simple right $H$-comodules.

Having established this, let $\Lambda$ be the set of irreducible characters corresponding to the simple comodules in $\mathcal{V}$. We define a ring structure on the $\mathbb{Z}$-module $\mathbb{Z}\Lambda$ by specifying the product on characters: for any $\chi(M), \chi(N) \in \Lambda$,
$$\chi(M)\chi(N):=\chi(M\otimes N).$$
According to \cite[Subsection 2.2]{YLZZ25}, $\mathbb{Z}\Lambda$ is a unital $\mathbb{Z}_+$ ring with $\mathbb{Z}_+$-basis $\Lambda$.

Moreover, we have the following result:
\begin{lemma}\emph{(}\cite[Lemma 2.8]{YLZZ25}\emph{)}\label{lem:Gr=character}
Let $H$ be a cosemisimple Hopf algebra over $\k$ and $\Lambda$ be the set of all the irreducible characters of simple right $H$-comodules. Then \begin{eqnarray*}
F:\operatorname{Gr}(H\text{-comod})&\rightarrow&\mathbb{Z}\Lambda,\\
 M\;\;\;\;&\mapsto &\chi(M).
\end{eqnarray*} is an isomorphism between unital $\mathbb{Z}_+$-rings.
\end{lemma}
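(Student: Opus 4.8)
The plan is to verify in turn that $F$ is a well-defined unital ring homomorphism, that it carries the distinguished $\mathbb{Z}_+$-basis $\mathcal{V}$ of $\operatorname{Gr}(H\text{-comod})$ onto $\Lambda$, and that this correspondence is a bijection; together these give an isomorphism of unital $\mathbb{Z}_+$-rings. First I would record the elementary properties of the character. Writing $\rho(m_i)=\sum_j m_j\otimes c_{ji}$ for a finite-dimensional comodule $(M,\rho)$ with basis $\{m_i\}$, one computes from the definition that $\chi(M)=\sum_i c_{ii}\in H$, the ``cotrace'' of the coefficient matrix $(c_{ij})$. A change of basis conjugates $(c_{ij})$ and leaves this sum unchanged, so $\chi(M)$ depends only on the isomorphism class of $M$. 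Moreover $\chi$ is additive on direct sums: a basis adapted to $M\oplus N$ makes the coefficient matrix block diagonal, whence $\chi(M\oplus N)=\chi(M)+\chi(N)$. Because $H$ is cosemisimple, every short exact sequence of finite-dimensional right $H$-comodules splits, so $\chi$ annihilates every generator $[Y]-[X]-[Z]$ of $\mathcal{F}_0$ and therefore descends to a well-defined group homomorphism $F$ on $\operatorname{Gr}(H\text{-comod})$.

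Next I would check the multiplicative structure. The product on $\mathbb{Z}\Lambda$ was defined precisely by $\chi(M)\chi(N):=\chi(M\otimes N)$, and is a well-defined unital $\mathbb{Z}_+$-ring product by the construction recalled above. Since the product in $\operatorname{Gr}(H\text{-comod})$ is induced by the tensor product of comodules, one has $F([M][N])=F([M\otimes N])=\chi(M\otimes N)=\chi(M)\chi(N)=F([M])F([N])$, and $F$ sends the class of the trivial comodule $\unit$ to the unit of $\mathbb{Z}\Lambda$. Hence $F$ is a homomorphism of unital rings. By construction $F$ maps the $\mathbb{Z}_+$-basis $\mathcal{V}=\{[V]\}$ of simple comodules to $\Lambda=\{\chi(V)\}$, and this assignment is surjective onto $\Lambda$ by the very definition of $\Lambda$.

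It remains to prove injectivity on bases, that is, that non-isomorphic simple comodules have distinct characters; this is the crux of the argument. The plan here is to invoke the structure theory of cosemisimple coalgebras: $H$ decomposes as a direct sum $H=\bigoplus_{V\in\mathcal{V}} C_V$ of simple subcoalgebras, where $C_V$ is the comatrix coalgebra spanned by the matrix entries $\{c_{ij}\}$ of the simple comodule $V$, and the entries belonging to distinct $V$ are linearly independent in $H$. Since $\chi(V)=\sum_i c_{ii}$ is a nonzero element of the summand $C_V$, the characters $\{\chi(V)\}_{V\in\mathcal{V}}$ lie in distinct direct summands and are therefore linearly independent; in particular $\chi$ is injective on isomorphism classes and induces a bijection $\mathcal{V}\to\Lambda$. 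I expect this linear-independence step — the coalgebra analogue of the linear independence of the irreducible characters of a semisimple algebra — to be the main obstacle, and it is precisely where the cosemisimplicity of $H$ enters in an essential way.

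Finally, I would combine the pieces. The map $F$ is a unital ring homomorphism that restricts to a bijection between the free $\mathbb{Z}$-bases $\mathcal{V}$ and $\Lambda$ of the two rings, hence is an isomorphism of the underlying $\mathbb{Z}$-modules; since it is a ring homomorphism carrying one distinguished $\mathbb{Z}_+$-basis bijectively onto the other, it is an isomorphism of unital $\mathbb{Z}_+$-rings, as claimed.
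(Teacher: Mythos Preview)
Your proof is correct and follows the standard argument. Note, however, that the paper does not actually prove this lemma: it is quoted verbatim from \cite[Lemma 2.8]{YLZZ25} and stated without proof, so there is no ``paper's own proof'' to compare against. Your approach---verifying basis-independence and additivity of the character, using cosemisimplicity to split short exact sequences so that $\chi$ descends to the Grothendieck group, checking multiplicativity (which is essentially tautological given how the product on $\mathbb{Z}\Lambda$ is defined), and then using the coalgebra decomposition $H=\bigoplus_{V}C_V$ into simple comatrix subcoalgebras to deduce that irreducible characters are linearly independent---is exactly the standard argument and is what one would expect the cited source to contain.
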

From the above lemmas, the following theorem is derived, describing the constraints imposed by the coquasitriangular structure on matched pairs of groups.
\begin{theorem}\label{prop:OfOf}
Suppose $\Bbbk^G{}^\tau\#_{\sigma}\Bbbk F$ admits a coquasitriangular structure. Then
$O_fO_{f^\prime}=O_{f^\prime}O_f$ for all $f, f^\prime\in F.$
\end{theorem}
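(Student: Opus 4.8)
The plan is to exploit the bridge, established in Lemmas~\ref{lem:braided}, \ref{lem:Gr=character} and the results cited from \cite{YLZZ25}, between the hypothesis and the commutativity of the Grothendieck ring. If $\Bbbk^G{}^\tau\#_{\sigma}\Bbbk F$ carries a coquasitriangular structure, then by Lemma~\ref{lem:braided} the category of its right comodules is braided monoidal, so for any two finite-dimensional comodules $M$ and $N$ one has $M\otimes N\cong N\otimes M$; passing to the Grothendieck ring and using Lemma~\ref{lem:Gr=character}, this means $\mathbb{Z}\Lambda$ is a commutative ring, i.e. $\chi(M)\chi(N)=\chi(N)\chi(M)$ for all simple comodules $M,N$.

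Next I would specialize to the simple comodules built from a single orbit. By Lemma~\ref{lem:simplecomod}(4) every simple right comodule is some $\tilde V$ induced from a simple $\k_{\tau_f}^{G_f}$-comodule, and by Lemma~\ref{lem:character} its character is supported, as a sum of terms $p_{h}\# f''$, precisely on those $f''$ lying in the orbit $O_f$ (since $f''$ runs over $z^{-1}\triangleright f$ with $z\in T_f$, and $\{z^{-1}\triangleright f\}=\{g\triangleright f : g\in G\}^{-1}$-translates cover $O_{f^{-1}}$—more precisely, the set of $F$-components occurring is exactly $O_{f^{-1}}$, or after a harmless relabeling of the chosen orbit representative, an orbit). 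The key computational input is the multiplication rule $(p_g\#f)(p_{g'}\#f')=\delta_{g\triangleleft f,g'}\,\sigma(g;f,f')\,p_g\#ff'$ from Definition~\ref{def:H}: when we multiply two characters $\chi(\tilde V)\chi(\tilde W)$, each resulting nonzero term has $F$-component a product $f_1f_2$ with $f_1$ ranging over the $F$-components of $\chi(\tilde V)$ and $f_2$ over those of $\chi(\tilde W)$. Hence the set of $F$-components appearing in $\chi(\tilde V)\chi(\tilde W)$ is contained in (and, tracking the orbit structure carefully, equals) the product set of the two orbits.

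The crucial step is to translate the \emph{equality} $\chi(\tilde V)\chi(\tilde W)=\chi(\tilde W)\chi(\tilde V)$ in $\mathbb{Z}\Lambda$ into an equality of supports. Since $\Lambda$ is a $\mathbb{Z}_+$-basis and the characters decompose into a nonnegative-integer combination of basis elements $\chi(\tilde U)$, and since the $F$-components occurring in distinct $\chi(\tilde U)$'s come from the corresponding orbits, the \emph{union of orbits} appearing on the left must equal that on the right. Because nonnegative integers cannot cancel, the set of $F$-components of $\chi(\tilde V)\chi(\tilde W)$—which by the previous paragraph is governed by $O_f O_{f'}$ (choosing $V,W$ built from representatives $f,f'$, or rather their inverses)—must coincide with that of $\chi(\tilde W)\chi(\tilde V)$, governed by $O_{f'}O_f$. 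Taking $V$ and $W$ to be, say, the trivial one-dimensional $\k_{\tau_f}^{G_f}$- and $\k_{\tau_{f'}}^{G_{f'}}$-comodules makes the bookkeeping cleanest, and one then reads off $O_f O_{f'}=O_{f'}O_f$. I expect the main obstacle to be the careful verification that the set of $F$-components occurring in a product character $\chi(\tilde V)\chi(\tilde W)$ is \emph{exactly} $O_{f}O_{f'}$ (in suitable inverse/relabeling conventions) rather than merely contained in it: one must check that no coefficient $\sigma(g;f_1,f_2)$ or structure constant vanishes and that the nonnegativity of the $\mathbb{Z}_+$-basis coefficients rules out cancellation, so that every element of the product set genuinely survives.
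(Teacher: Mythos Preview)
Your strategy coincides with the paper's: coquasitriangular $\Rightarrow$ braided comodule category $\Rightarrow$ commutative Grothendieck ring $\Rightarrow$ $\chi(\tilde V)\chi(\tilde W)=\chi(\tilde W)\chi(\tilde V)$ as elements of $H$, then extract $O_fO_{f'}=O_{f'}O_f$ from the $F$-components. However, you correctly flag and then leave unresolved the crucial obstacle---showing that the set of $F$-components of $\chi(\tilde V)\chi(\tilde W)$ is \emph{exactly} $O_fO_{f'}$ rather than a proper subset. Your nonnegativity argument in the $\mathbb{Z}_+$-basis does not close this gap: it tells you the two products have the same decomposition into irreducibles, but not that every element of $O_fO_{f'}$ actually survives in the product. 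The explicit coefficients involve $\tau$, $\sigma$, $a_{ii}^g$, $b_{jj}^h$ and a Kronecker delta, and nothing you have said rules out cancellation at a general $p_h\# u$.

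The paper's missing ingredient is simple but decisive: restrict to the $p_{1_G}$-component, i.e., compare only the terms $p_{1_G}\# u$ on both sides. In the product formula, demanding $z^{-1}gz=1_G$ forces $g=1_G$, and then the delta $\delta_{(z^{-1}gz)\triangleleft(z^{-1}\triangleright f),\,y^{-1}hy}$ forces $h=1_G$ as well; at $g=h=1_G$ every $\tau$ and $\sigma$ factor is $1$, and $a_{ii}^{1_G}=b_{jj}^{1_G}=1$ by the counit axiom for comodules. One obtains the clean identity
\[
mn\sum_{z\in T_f}\sum_{y\in T_{f'}} p_{1_G}\#\,(z^{-1}\triangleright f)(y^{-1}\triangleright f')
\;=\;
mn\sum_{z\in T_f}\sum_{y\in T_{f'}} p_{1_G}\#\,(y^{-1}\triangleright f')(z^{-1}\triangleright f),
\]
with strictly positive integer coefficients, so the supports $O_fO_{f'}$ and $O_{f'}O_f$ must agree. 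Once you see this, the detour through the $\mathbb{Z}_+$-basis is unnecessary.

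One minor correction: $\{z^{-1}\triangleright f: z\in T_f\}$ is $O_f$, not $O_{f^{-1}}$. Since $T_f$ is a set of right coset representatives of $G_f$, the inverses $z^{-1}$ represent the left cosets, and because $G_f$ fixes $f$ one has $\{z^{-1}\triangleright f: z\in T_f\}=\{x\triangleright f: x\in G\}=O_f$; no relabeling is needed.
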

\begin{proof}
Suppose $V = \operatorname{span}\{v^{(1)}, \dots, v^{(m)}\}$ and $W = \operatorname{span}\{w^{(1)}, \dots, w^{(n)}\}$ are simple right comodules over $\Bbbk_{\tau_f}^{G_f}$ and $\Bbbk_{\tau_{f^\prime}}^{G_{f^\prime}}$, with coactions $$\rho(v^{(i)})=\sum_{l=1}^m v^{(l)}\otimes (\sum_{g\in G_f}a_{li}^g p_g),$$ and $$\rho(w^{(j)})=\sum_{k=1}^n w^{(k)}\otimes (\sum_{h\in G_{f^\prime}}b_{kj}^{h} p_{h}),$$
defined for all $1\leq l, i\leq  m, 1\leq k, j\leq n$, and for all $a_{li}^g, b_{kj}^h\in\k$.
Let $\tilde{V}$ and $\tilde{W}$ be the induced right $\k^G{}^\tau\#_{\sigma}\k F$-comodules of $V$ and $W$, respectively (see Lemma \ref{lem:simplecomod}).
An immediate consequence of Lemma \ref{lem:character} is that
\begin{eqnarray*}
&&\chi(\tilde{V})\chi(\tilde{W})\\
&=&(\sum\limits_{i=1}^m\sum\limits_{z\in T_f}\sum\limits_{g\in G_f}\tau(z^{-1},g;f)^{-1}\tau(z^{-1}gz, z^{-1};f)a_{ii}^{g}p_{z^{-1}gz}\#(z^{-1}\triangleright f))\\
&&(\sum\limits_{j=1}^n\sum\limits_{y\in T_{f^\prime}}\sum\limits_{h\in G_{f^\prime}}\tau(y^{-1},h;f^\prime)^{-1}\tau(y^{-1}hy, y^{-1};f^\prime)b_{jj}^{h}p_{y^{-1}hy}\#(y^{-1}\triangleright f^{\prime}))\\
&=&\sum\limits_{i=1}^m\sum\limits_{z\in T_f}\sum\limits_{g\in G_f}\sum\limits_{j=1}^n\sum\limits_{y\in T_{f^\prime}}\sum\limits_{h\in G_{f^\prime}}
\tau(z^{-1},g;f)^{-1}\tau(z^{-1}gz, z^{-1};f)\tau(y^{-1},h;f^\prime)^{-1}\\
&&\tau(y^{-1}hy, y^{-1};f^\prime)a_{ii}^{g}b_{jj}^{h}\delta_{(z^{-1}gz)\triangleleft (z^{-1}\triangleright f), y^{-1}hy} \sigma(z^{-1}gz; z^{-1}\triangleright f, y^{-1}\triangleright f^{\prime})\\
&&p_{z^{-1}gz}\# (z^{-1}\triangleright f)(y^{-1}\triangleright f^{\prime}),
\end{eqnarray*}
and
\begin{eqnarray*}
&&\chi(\tilde{W})\chi(\tilde{V})\\
&=&(\sum\limits_{j=1}^n\sum\limits_{y\in T_{f^\prime}}\sum\limits_{h\in G_{f^\prime}}\tau(y^{-1},h;f^\prime)^{-1}\tau(y^{-1}hy, y^{-1};f^\prime)b_{jj}^{h}p_{y^{-1}hy}\#(y^{-1}\triangleright f^{\prime}))\\
&&(\sum\limits_{i=1}^m\sum\limits_{z\in T_f}\sum\limits_{g\in G_f}\tau(z^{-1},g;f)^{-1}\tau(z^{-1}gz, z^{-1};f)a_{ii}^{g}p_{z^{-1}gz}\#(z^{-1}\triangleright f))\\
&=&\sum\limits_{i=1}^m\sum\limits_{z\in T_f}\sum\limits_{g\in G_f}\sum\limits_{j=1}^n\sum\limits_{y\in T_{f^\prime}}\sum\limits_{h\in G_{f^\prime}}
\tau(z^{-1},g;f)^{-1}\tau(z^{-1}gz, z^{-1};f)\tau(y^{-1},h;f^\prime)^{-1}\\
&&\tau(y^{-1}hy, y^{-1};f^\prime)a_{ii}^{g}b_{jj}^{h}\delta_{(y^{-1}hy)\triangleleft (y^{-1}\triangleright f^\prime), z^{-1}gz} \sigma(y^{-1}hy; y^{-1}\triangleright f^\prime, z^{-1}\triangleright f)\\
&&p_{y^{-1}hy}\# (y^{-1}\triangleright f^{\prime}) (z^{-1}\triangleright f).
\end{eqnarray*}
By combining Lemmas \ref{lem:braided} and \ref{lem:Gr=character}, we conclude that $$\chi(\tilde{V})\chi(\tilde{W})=\chi(\tilde{W})\chi(\tilde{V}).$$
Since the set $\{p_x\# u \mid x \in G, u \in F\}$ is linearly independent, the equality above implies that the coefficients of each basis vector agree. In particular, we compare the coefficients of $\sum\limits_{u\in F}p_1\# u$ in the respective operator products $\chi(\tilde{V})\chi(\tilde{W})$ and $\chi(\tilde{W})\chi(\tilde{V})$. Note that $a_{ii}^{1_G} = b_{jj}^{1_G} = 1$ and that $z^{-1}gz = 1_G$ forces $g = 1_G$. The coefficient in $\chi(\tilde{V})\chi(\tilde{W})$ is
\begin{eqnarray*}
&&\sum\limits_{i=1}^m\sum\limits_{z\in T_f}\sum\limits_{j=1}^n\sum\limits_{y\in T_{f^\prime}}\sum\limits_{h\in G_{f^\prime}}
\tau(y^{-1},h;f^\prime)^{-1}
\tau(y^{-1}hy, y^{-1};f^\prime)a_{ii}^{1_G}b_{jj}^{h}\delta_{1_G, y^{-1}hy} \\
&&p_{1_G}\# (z^{-1}\triangleright f)(y^{-1}\triangleright f^{\prime})\\
&=& mn(\sum\limits_{z\in T_f}\sum\limits_{y\in T_{f^\prime}}  p_{1_G}\# (z^{-1}\triangleright f)(y^{-1} \triangleright f^{\prime})),
\end{eqnarray*}
while in $\chi(\tilde{W})\chi(\tilde{V})$, it is
\begin{eqnarray*}
mn(\sum\limits_{z\in T_f}\sum\limits_{y\in T_{f^\prime}}
p_{1_G}\# (y^{-1}\triangleright f^{\prime})(z^{-1} \triangleright f)).
\end{eqnarray*}
It therefore follows that
$$
\sum\limits_{z\in T_f}\sum\limits_{y\in T_{f^\prime}}   (z^{-1}\triangleright f)(y^{-1} \triangleright f^{\prime})=
\sum\limits_{z\in T_f}\sum\limits_{y\in T_{f^\prime}}(y^{-1}\triangleright f^{\prime})(z^{-1} \triangleright f).
$$
Now, for an arbitrary $x\in G$, we consider its decomposition $x=g_xz_x,$ where $g_x\in G_f$ and $z_x\in T_f$.
Then, the action of $x^{-1}$ on
$f$ is given by $$x^{-1}\triangleright f=(z_x^{-1} g_x^{-1})  \triangleright f=z_x^{-1}\triangleright f,$$
where the last equality holds because $g_x^{-1}\in G_f$ stabilizes $f$.
This identity implies that
\begin{eqnarray}\label{eq:Of}
O_f= \{z^{-1}\triangleright f\mid z\in  T_f\},
\end{eqnarray}from which we conclude that
$$O_fO_{f^\prime}=O_{f^\prime }O_{f}. $$
\end{proof}
A direct consequence of the proposition above is:
\begin{corollary}\label{coro:Fnonabelian}
Let $F$ be a non-abelian group. Suppose that $g\triangleright f=f$ for any $g\in G, f\in F,$ then $\Bbbk^G{}^\tau\#_{\sigma}\Bbbk F$ admits no coquasitriangular structure.
\end{corollary}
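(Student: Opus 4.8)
The plan is to derive this as a straightforward contrapositive of Theorem \ref{prop:OfOf}. First I would unwind the hypothesis: if $g\triangleright f=f$ for every $g\in G$ and $f\in F$, then the $G$-orbit of any $f$ is the singleton $O_f=\{g\triangleright f\mid g\in G\}=\{f\}$. Consequently, for all $f,f'\in F$ the product sets collapse to
$$O_fO_{f'}=\{f f'\},\qquad O_{f'}O_f=\{f' f\}.$$
Next, assume for contradiction that $\Bbbk^G{}^\tau\#_{\sigma}\Bbbk F$ admits a coquasitriangular structure. Then Theorem \ref{prop:OfOf} forces $O_fO_{f'}=O_{f'}O_f$ for all $f,f'\in F$, which by the displayed identities means $\{ff'\}=\{f'f\}$, i.e. $ff'=f'f$ for all $f,f'\in F$. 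This contradicts the assumption that $F$ is non-abelian, so no coquasitriangular structure can exist.

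There is essentially no technical obstacle here once Theorem \ref{prop:OfOf} is in hand; the only point to state cleanly is the reduction of $O_fO_{f'}$ to a one-element set under the trivial-action hypothesis, and the observation that equality of these one-element subsets of $F$ is exactly commutativity of $f$ and $f'$. One could also remark that the hypothesis $g\triangleright f=f$ is precisely the condition that the extension be "$\pi$-split on the $\triangleright$ side" (equivalently that $F$ acts on $G$ via $\triangleleft$ with $G$ acting trivially back), so this corollary produces a large supply of cosemisimple Hopf algebras $\Bbbk^G{}^\tau\#_{\sigma}\Bbbk F$ with non-abelian $F$ that are not coquasitriangular.
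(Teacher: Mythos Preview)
Your proof is correct and matches the paper's argument essentially verbatim: both note that the trivial action collapses each orbit to $O_f=\{f\}$, so the orbit-product condition of Theorem~\ref{prop:OfOf} becomes $ff'=f'f$ for all $f,f'\in F$, contradicting non-abelianness of $F$. The extra commentary at the end is harmless but unnecessary for the proof.
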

\begin{proof}
In this case, $O_f=\{f\}$ for any $f\in F.$ Since $F$ is non-abelian, it follows that $O_fO_{f^\prime} \neq O_{f^\prime} O_f$. According to Theorem \ref{prop:OfOf}, the Hopf algebra $\Bbbk^G{}^\tau\#_{\sigma}\Bbbk F$ admits no coquasitriangular structure.
\end{proof}

\begin{example}\label{ex:1}
Let $\mathbb{Z}_{n}=\langle g\mid g^{n}=1\rangle$ for some $n\geq 2$ and $\Bbb{D}_\infty$ denote the infinite dihedral group $\langle x,y\mid x^2=1, xyx=y^{-1}\rangle$. Define group actions $\mathbb{Z}_n\xleftarrow{\triangleleft}\mathbb{Z}_n\times \Bbb{D}_\infty \xrightarrow{\triangleright}\Bbb{D}_\infty$ on the sets by
$$
g^{i}\triangleright x^{j}y^{k} x^{l}=x^{j}y^{k} x^{l},\;\;
g^i\triangleleft x^{j}y^{k} x^{l}=
\left\{
\begin{aligned}
g^{i}~,~~~ &\text{if} ~~~  k ~~\text{is even}; \\
g^{n-i},~~~&\text{if} ~~~  k ~~\text{is odd},
\end{aligned}
\right.
$$
where $0\leq i\leq n-1, 0\leq j,l\leq 1, k\in \mathbb{Z}.$
 Clearly, $(\Bbb{D}_\infty, \mathbb{Z}_n)$ together with group actions $\mathbb{Z}_n\xleftarrow{\triangleleft}\mathbb{Z}_n\times \Bbb{D}_\infty \xrightarrow{\triangleright}\Bbb{D}_\infty$ on the sets is a matched pair. In such a case, by Corollary \ref{coro:Fnonabelian}, $\Bbbk^{\mathbb{Z}_n}\#\Bbbk \Bbb{D}_\infty$ admits no coquasitriangular structure.
\end{example}
\begin{example}\label{ex:q8dinf}
Let $\mathcal{Q}_8$ be the quaternion group $\langle r, s\mid r^4=1, r^2=s^2, sr=r^{-1}s\rangle$ and $\Bbb{D}_\infty$ denote the infinite dihedral group $\langle x,y\mid x^2=1, xyx=y^{-1}\rangle$. Define group actions $\mathcal{Q}_8\xleftarrow{\triangleleft}\mathcal{Q}_8\times \Bbb{D}_\infty \xrightarrow{\triangleright}\Bbb{D}_\infty$ on the sets by
$$
u\triangleright v=v,\;\;r\triangleleft x=s,\;\; s\triangleleft x=r,\;\; r\triangleleft y=s,\;\; s\triangleleft y=r,
$$
where $u\in \mathcal{Q}_8, v\in \Bbb{D}_\infty.$
Then, the pair $(\Bbb{D}_\infty, \mathcal{Q}_8)$ forms a matched pair of groups.
It follows from Corollary \ref{coro:Fnonabelian} that $\k^{\mathcal{Q}_8}\# \Bbb{D}_\infty$ is not a coquasitriangular Hopf algebra.
\end{example}
Under certain conditions, we can derive further constraints imposed by the coquasitriangular structure on the mutual actions between the two groups within a matched pair.
\begin{proposition}\label{prop:action}
Suppose $\Bbbk^G{}^\tau\#_{\sigma}\Bbbk F$ is a coquasitriangular Hopf algebra, where $G$ is an abelian group, $\tau$ is trivial. Let $f, f^\prime \in F$.
\begin{itemize}
\item[(1)]If $g\in G_f, g\notin G_{f^\prime}$, then $g\triangleleft f^{\prime\prime} \notin G_{f^\prime}$ for any $f^{\prime\prime}\in O_f;$
\item[(2)]Suppose $g\in G_f\cap G_{f^\prime}$.
Then there exists some $f^{\prime\prime}\in O_f$ such that $g\triangleleft f^{\prime\prime}\in G_{f^\prime}$ if and only if there exists some $f^{\prime\prime\prime} \in O_{f^\prime}$ such that $g\triangleleft f^{\prime\prime\prime}\in G_{f}$.
\end{itemize}
\end{proposition}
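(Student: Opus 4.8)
The plan is to feed the representation theory of the relevant coalgebras into Theorem~\ref{prop:cqtandcomod}, where, in the present situation, it collapses to a scalar identity that completely pins down the action $\triangleleft$ on orbits. First I would record the input: since $G$ is abelian, every stabilizer $G_f$ is an abelian subgroup of $G$, and since $\tau$ is trivial the twist $\tau_f$ is trivial as well, so $\k_{\tau_f}^{G_f}=\k^{G_f}$ is the function algebra on the finite abelian group $G_f$. Consequently its simple right comodules are one-dimensional and are indexed by the characters $\psi\in\widehat{G_f}$, the comodule attached to $\psi$ being $\k v$ with $\rho(v)=v\otimes\sum_{g\in G_f}\psi(g)p_g$; likewise the simple right $\k^G$-comodules are one-dimensional, indexed by $\phi\in\widehat{G}$. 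Substituting $m=n=1$, $a_{11}^g=\psi(g)$ and $b_{11}^h=\phi(h)$ into Theorem~\ref{prop:cqtandcomod} and dividing by the nonzero scalar $\psi(g)$ gives
\begin{equation*}
\phi\bigl((z^{-1}gz)\triangleleft(z^{-1}\triangleright f)\bigr)=\phi(z^{-1}gz)
\end{equation*}
for all $\phi\in\widehat{G}$, all $g\in G_f$ and all $z\in T_f$.

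Since $G$ is a finite abelian group, its characters separate points, so this forces $(z^{-1}gz)\triangleleft(z^{-1}\triangleright f)=z^{-1}gz$; and as $G$ is abelian, $z^{-1}gz=g$, whence $g\triangleleft(z^{-1}\triangleright f)=g$ for every $g\in G_f$ and $z\in T_f$. By \eqref{eq:Of} one has $O_f=\{z^{-1}\triangleright f\mid z\in T_f\}$, so I obtain the key fact that
\begin{equation*}
g\triangleleft f^{\prime\prime}=g\quad\text{for all }g\in G_f\text{ and }f^{\prime\prime}\in O_f,
\end{equation*}
which I shall call $(\star)$ (and which one may summarise by saying that, under these hypotheses, coquasitriangularity forces each stabilizer $G_f$ to act trivially on its entire orbit $O_f$).

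With $(\star)$ both assertions are immediate. For (1): if $g\in G_f$ and $g\notin G_{f^\prime}$, then for any $f^{\prime\prime}\in O_f$ we have $g\triangleleft f^{\prime\prime}=g\notin G_{f^\prime}$ by $(\star)$. For (2): if $g\in G_f\cap G_{f^\prime}$, then choosing $f^{\prime\prime}=f\in O_f$ we get $g\triangleleft f^{\prime\prime}=g\in G_{f^\prime}$ by $(\star)$, so the left-hand existential statement holds; symmetrically, choosing $f^{\prime\prime\prime}=f^\prime\in O_{f^\prime}$ and applying $(\star)$ with $f^\prime$ in place of $f$ shows the right-hand one holds, so the two statements are equivalent.

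There is no genuine obstacle here; the two points that deserve care are the identification of the simple $\k^{G_f}$- and $\k^G$-comodules, which uses the hypotheses that $G$ is abelian and $\tau$ is trivial in an essential way, and the legitimacy of invoking Theorem~\ref{prop:cqtandcomod} for every pair $(\psi,\phi)$ of characters — this is fine because a simple $V$ induces a simple $H$-comodule $\tilde V$ by Lemma~\ref{lem:simplecomod}(3), and every one-dimensional $\k^G$-comodule is automatically simple.
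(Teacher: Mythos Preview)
Your argument is correct and takes a genuinely different route from the paper. The paper works directly with the character identity $\chi(\tilde V)\chi(\tilde W)=\chi(\tilde W)\chi(\tilde V)$ for simple comodules $V$ over $\k^{G_f}$ and $W$ over $\k^{G_{f'}}$: it compares coefficients of $p_g\#\cdots$, notes that when $g\notin G_{f'}$ the product $\chi(\tilde W)\chi(\tilde V)$ contributes nothing at $p_g$, and then exploits the freedom to pick $V,W$ with prescribed nonzero structure constants to derive a contradiction if $g\triangleleft(z^{-1}\triangleright f)\in G_{f'}$; part~(2) is handled by a symmetric nonvanishing argument. You instead invoke Theorem~\ref{prop:cqtandcomod} (essentially the special case $f'=1$, so $W$ is a $\k^G$-comodule) and use that characters of the finite abelian group $G$ separate points to extract the much stronger intermediate fact $(\star)$: $g\triangleleft f''=g$ for every $g\in G_f$ and $f''\in O_f$. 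This renders both (1) and (2) trivial and is of independent interest, since it says each stabilizer is fixed pointwise by $\triangleleft$ along its own orbit. The only caveat is that Theorem~\ref{prop:cqtandcomod} appears after Proposition~\ref{prop:action} in the paper, so you are making a forward reference; there is no logical circularity, however, as that theorem's proof relies only on Theorem~\ref{prop:OfOf}, and if you wish to respect the paper's ordering you can simply reproduce its short computation (set $f'=1$ in the character product) in place of the citation.
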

\begin{proof}
\begin{itemize}
\item[(1)]Since $G$ is abelian, the dual $\k^G$ also forms an abelian group algebra. As a consequence, every simple right $\k^{G_f}$-comodule is $1$-dimensional.
Suppose $V = \operatorname{span}\{v\}$ and $W = \operatorname{span}\{w\}$ are simple right comodules over $\Bbbk^{G_f}$ and $\Bbbk^{G_{f^\prime}}$, with coactions $$\rho(v)= v\otimes (\sum_{g\in G_f}a^g p_g),$$ and $$\rho(w)= w\otimes (\sum_{h\in G_{f^\prime}}b^{h} p_{h}),$$
defined for all $a^g, b^h\in\k$.
Let $\tilde{V}$ and $\tilde{W}$ be the induced right $\k^G{}^\tau\#_{\sigma}\k F$-comodules of $V$ and $W$, respectively.
Analogously to the proof of Theorem \ref{prop:OfOf}, we deduce that
\begin{eqnarray*}
&&\chi(\tilde{V})\chi(\tilde{W})\\
&=&\sum\limits_{z\in T_f}\sum\limits_{g\in G_f}\sum\limits_{y\in T_{f^\prime}}\sum\limits_{h\in G_{f^\prime}}
a^{g}b^{h}\delta_{g\triangleleft (z^{-1}\triangleright f),h} \sigma(g; z^{-1}\triangleright f, y^{-1}\triangleright f^{\prime})\\
&&p_{g}\# (z^{-1}\triangleright f)(y^{-1}\triangleright f^{\prime})\\
&=&\sum\limits_{z\in T_f}\sum\limits_{g\in G_f}\sum\limits_{y\in T_{f^\prime}}\sum\limits_{h\in G_{f^\prime}}
a^{g}b^{h}\delta_{h\triangleleft (y^{-1}\triangleright f^\prime), g} \sigma(h; y^{-1}\triangleright  f^\prime, z^{-1}\triangleright f)\\
&&p_{h}\# (y^{-1}\triangleright f^{\prime}) (z^{-1}\triangleright f)\\
&=&
\chi(\tilde{W})\chi(\tilde{V}).
\end{eqnarray*}
In view of Theorem \ref{prop:OfOf}, for any $z\in T_f, y\in T_{f^\prime}$, we assume that  $$(z^{-1}\triangleright f)(y^{-1}\triangleright f^{{\prime}})=(u_y^{-1}\triangleright f^\prime)(u_z^{-1}\triangleright f)$$ holds for some $u_z\in T_{f}, u_y\in T_{f^\prime}$.
 If $g\in G_f, g\notin G_{f^\prime}$, by comparing the coefficients of $p_g\# (z^{-1}\triangleright f)(y^{-1}\triangleright f^{{\prime}})$, we have
\begin{eqnarray*}
&&\sum\limits_{h\in G_{f^\prime}}a^{g}b^{h}
\delta_{g\triangleleft (z^{-1}\triangleright f),h} \sigma(g; z^{-1}\triangleright f, y^{-1}\triangleright f^{\prime})=0.
\end{eqnarray*}
It follows that
\begin{eqnarray*}
\sum\limits_{h\in G_{f^\prime}}a^{g}b^{h}
\delta_{g\triangleleft (z^{-1}\triangleright f),h}=0.
\end{eqnarray*}
Suppose that there exists some $h\in G_{f^\prime}$ such that $$h=g\triangleleft (z^{-1}\triangleright f).$$
We have $$a^gb^{g\triangleleft (z^{-1}\triangleright f) }=0.$$
Note that $\k^{G_f}$ can be decomposed into a direct sum of one-dimensional subcoalgebras.
Thus for any $x\in G_f$, there exists a simple right $\k^{G_f}$-comodule $V_x = \operatorname{span}\{v_x\}$ whose coaction is given by
$$\rho(v_x)=v_x\otimes (\sum_{g\in G_f} a_x^g p_g),$$
where $a_x^{g}\in \k$ for all $g\in G_f,$ and $a_x^x\neq 0.$
A similar statement holds for $\k^{G_{f^\prime}}$-comodule $W_x = \operatorname{span}\{w_x\}$. Setting
$
V=V_g, W=W_{g\triangleleft (z^{-1}\triangleright f)}
$
leads to a contradiction. Thus, by (\ref{eq:Of}), if $g\in G_f, g\notin G_{f^\prime}$, then $g\triangleleft f^{\prime\prime} \notin G_{f}$ for any $f^{\prime\prime}\in O_f.$
\item[(2)]
Similarly, we compare the coefficients of
 $p_g\# (z^{-1}\triangleright f)(y^{-1}\triangleright f^{{\prime}})$ for any $z\in T_f, y\in T_{f^\prime}$. If there exists some $f^{\prime\prime}\in O_f$ such that $g\triangleleft f^{\prime\prime}\in G_{f^\prime}$, then (\ref{eq:Of}) yeilds that there exists some $z\in T_f$ such that $$g\triangleleft(z^{-1}\triangleright f)=h\in G_{f^\prime}.$$
 We have
 \begin{eqnarray*}
a_g^{g}b_{g\triangleleft(z^{-1}\triangleright f)}^{g\triangleleft(z^{-1}\triangleright f)} \sigma(g; z^{-1}\triangleright f, y^{-1}\triangleright f^{\prime})\neq0
\end{eqnarray*}
This implies that the coefficient of
 $p_g\# (z^{-1}\triangleright f)(y^{-1}\triangleright f^{{\prime}})$ in $\chi(\tilde{V_g})\chi(\tilde{W}_{g\triangleleft(z^{-1}\triangleright f)})$ is nonzero. By the fact that $$\chi(\tilde{V_g})\chi(\tilde{W}_{g\triangleleft(z^{-1}\triangleright f)})=\chi(\tilde{W}_{g\triangleleft(z^{-1}\triangleright f)})\chi(\tilde{V_g}),$$ we know that there exists some $y\in T_{f^\prime}$ such that $g\triangleleft (y^{-1}\triangleright f^\prime))\in G_{f}$. The opposite direction can be proved similarly. Thus there exists some $f^{\prime\prime}\in O_f$ such that $g\triangleleft f^{\prime\prime}\in G_{f^\prime}$ if and only if there exists some $f^{\prime\prime\prime} \in O_{f^\prime}$ such that $g\triangleleft f^{\prime\prime\prime}\in G_{f}$.
\end{itemize}
\end{proof}

\begin{corollary}
Suppose $\Bbbk^G{}^\tau\#_{\sigma}\Bbbk F$ is a coquasitriangular Hopf algebra, where $G, F$ are abelian groups, $\tau$ is trivial.
If the extension $\k^G\rightarrow \k^G{}^\tau\#_\sigma \k F\rightarrow \k F $ is central, then we have $\sigma(g; f, f^\prime)=\sigma(g; f^\prime, f)$ for any $f, f^\prime \in F, g\in G_f\cap G_{f^\prime}$.
\end{corollary}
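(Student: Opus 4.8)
The plan is to use the commutativity of the Grothendieck ring of comodules, exactly as in the proofs of Theorem~\ref{prop:OfOf} and Proposition~\ref{prop:action}. Since $G$ is abelian, $\k G_f$ is commutative, so $\k^{G_f}$ (recall $\tau$ is trivial) is a commutative cosemisimple coalgebra all of whose simple comodules are one-dimensional: such a comodule $V=\k v$ is given by a character $\psi\colon G_f\to\k^{\times}$ via $\rho(v)=v\otimes\sum_{x\in G_f}\psi(x)\,p_x$, and each $\psi(x)$ is a root of unity, in particular nonzero. Fix $f,f'\in F$ and $g\in G_f\cap G_{f'}$. I would take $V$ over $\k^{G_f}$ and $W$ over $\k^{G_{f'}}$ to be the trivial one-dimensional comodules ($\psi\equiv1$), form the induced simple $\k^G{}^\tau\#_\sigma\k F$-comodules $\tilde V$ and $\tilde W$ of Lemma~\ref{lem:simplecomod}, and invoke Lemmas~\ref{lem:braided} and~\ref{lem:Gr=character} to get $\chi(\tilde V)\chi(\tilde W)=\chi(\tilde W)\chi(\tilde V)$.

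Next I would specialize the character formula of Lemma~\ref{lem:character} and the product of Definition~\ref{def:H} to this situation. Centrality gives $x\triangleleft u=x$ for all $x\in G$, $u\in F$, and $G$ abelian gives $z^{-1}xz=x$; hence $\chi(\tilde V)=\sum_{z\in T_f}\sum_{x\in G_f}p_x\#(z^{-1}\triangleright f)$ and $(p_x\#u)(p_y\#u')=\delta_{x,y}\,\sigma(x;u,u')\,p_x\#uu'$. Using the bijection $z\mapsto z^{-1}\triangleright f$ from $T_f$ onto $O_f$ provided by~(\ref{eq:Of}), one obtains
\[
\chi(\tilde V)\chi(\tilde W)=\sum_{a\in O_f}\sum_{b\in O_{f'}}\sum_{x\in G_f\cap G_{f'}}\sigma(x;a,b)\,p_x\#ab,
\]
and the analogous expression for $\chi(\tilde W)\chi(\tilde V)$ with $\sigma(x;b,a)$ in place of $\sigma(x;a,b)$ and $p_x\#ba$ in place of $p_x\#ab$. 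Since $F$ is abelian, $ab=ba$, so equating coefficients of $p_x\#w$ (the set $\{p_x\#w\}$ being a basis of $\k^G{}^\tau\#_\sigma\k F$) yields, for all $x\in G_f\cap G_{f'}$ and all $w\in F$,
\[
\sum_{\substack{a\in O_f,\ b\in O_{f'}\\ ab=w}}\sigma(x;a,b)=\sum_{\substack{a\in O_f,\ b\in O_{f'}\\ ab=w}}\sigma(x;b,a).
\]
Taking $x=g$ and $w=ff'$, the pair $(f,f')$ (which arises from $z=y=1_G\in T_f\cap T_{f'}$) contributes $\sigma(g;f,f')$ on the left-hand side and $\sigma(g;f',f)$ on the right-hand side; when $(f,f')$ is the only pair in $O_f\times O_{f'}$ with product $ff'$, this identity is exactly the assertion.

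The remaining — and main — difficulty is to absorb the other pairs $(a,b)\in O_f\times O_{f'}$, $(a,b)\ne(f,f')$, with $ab=ff'$. Here I would exploit the following facts, all special to the central, $\tau$-trivial case. First, the $\sigma$-identity of Definition~\ref{def:H} collapses to the $2$-cocycle identity $\sigma(g;f',f'')\,\sigma(g;f,f'f'')=\sigma(g;f,f')\,\sigma(g;ff',f'')$, so $\sigma_g:=\sigma(g;-,-)\in Z^{2}(F,\k^{\times})$ and its antisymmetrization $\beta_g(u,u'):=\sigma_g(u,u')\,\sigma_g(u',u)^{-1}$ is an alternating bicharacter on the abelian group $F$; in particular $\sigma_g$ is symmetric precisely when $\beta_g\equiv1$. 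Second, the $\sigma$-$\tau$ compatibility becomes $\sigma(xy;u,u')=\sigma(x;y\triangleright u,y\triangleright u')\,\sigma(y;u,u')$; iterating this over $\langle g\rangle$ and using $g\triangleright f=f$, $g\triangleright f'=f'$ together with $g^{\ord(g)}=1_G$ (here $G$ finite is used) gives $\sigma(g;f,f')^{\ord(g)}=1$, so $\beta_g(f,f')$ is a root of unity. Third, $g$ permutes the fibre $\{(a,b)\in O_f\times O_{f'}:ab=ff'\}$ and fixes $(f,f')$, every member has the form $a=uf$, $b=u^{-1}f'$ with $u$ in the $G$-stable fixed subgroup $F^{g}=\{u\in F:g\triangleright u=u\}$, and $\beta_g(a,b)=\beta_g(f,f')\,\beta_g(u,ff')$. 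Grouping the displayed identity along $\langle g\rangle$-orbits of that fibre and substituting the compatibility relation to rewrite $\sigma(g;y\triangleright a,y\triangleright b)$ should make the correction terms cancel and leave $\sigma(g;f,f')=\sigma(g;f',f)$. I expect this last bookkeeping — matching the combinatorics of the fibre against the bicharacter $\beta_g$ — to be the only point that is not a direct rerun of the computations already made for Theorem~\ref{prop:OfOf} and Proposition~\ref{prop:action}.
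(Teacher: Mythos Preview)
Your overall strategy --- compute $\chi(\tilde V)\chi(\tilde W)=\chi(\tilde W)\chi(\tilde V)$ for one-dimensional $V,W$ and compare the coefficient of $p_g\#ff'$ --- is exactly the paper's. The only cosmetic difference is that the paper takes the specific comodules $V_g,W_g$ from the proof of Proposition~\ref{prop:action} (with $a_g^g,b_g^g\ne 0$) rather than your trivial comodules, but since that choice only introduces an overall nonzero scalar it is immaterial. The paper then simply sets $z=y=1_G$, asserts the resulting term-by-term equality, and reads off $\sigma(g;f,f')=\sigma(g;f',f)$.

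You have correctly identified the point the paper glosses over: the coefficient of $p_g\#ff'$ is a \emph{sum} over the fibre $\{(a,b)\in O_f\times O_{f'}:ab=ff'\}$, and $(f,f')$ need not be its only element. However, your proposed remedy is not carried out, and the specific mechanism you suggest --- group along $\langle g\rangle$-orbits in the fibre and let the compatibility relation force cancellation --- does not work as stated. If $g$ acts trivially on all of $F$ (which is permitted; then $G_f\cap G_{f'}=G$ and the hypothesis on $g$ is automatic), every $\langle g\rangle$-orbit in the fibre is a singleton and there is nothing to cancel, so the displayed identity is tautological and carries no information about $\beta_g(f,f')$. A sharper use of compatibility is available: for $g\in G_f\cap G_{f'}$ and $G$ abelian, the relation $\sigma(gh;u,v)=\sigma(g;h\triangleright u,h\triangleright v)\sigma(h;u,v)=\sigma(h;g\triangleright u,g\triangleright v)\sigma(g;u,v)$ gives $\sigma(g;h\triangleright u,h\triangleright v)=\sigma(g;u,v)$ for every $h\in G$, so each summand $\sigma(g;a,b)$ is constant along the full $G$-orbit of $(a,b)$ in the fibre, reducing the identity to a weighted count over $G$-orbits. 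But you still need to separate the orbit of $(f,f')$ from the others, and no argument is given. As written, your proposal halts at precisely the same place the paper does; the ``bookkeeping'' you expect to be routine is the missing idea.
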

\begin{proof}
By an argument similar to the proof of Proposition \ref{prop:action}, one can show that
\begin{eqnarray*}
&&a_g^{g}b_g^{g}\sigma(g; z^{-1}\triangleright f, y^{-1}\triangleright f^{\prime})p_{g}\# (z^{-1}\triangleright f)(y^{-1}\triangleright f^{\prime})\\
&=&
a_g^{g}b_g^{g} \sigma(g; y^{-1}\triangleright f^\prime, z^{-1}\triangleright f)p_{g}\# (y^{-1}\triangleright f^{\prime}) (z^{-1}\triangleright f),
\end{eqnarray*}
where $g\in G_f\cap G_{f^\prime}$, and $a^g_g, b^g_g$ are as defined in the proof of Proposition \ref{prop:action}.
It follows that $$\sigma(g; f, f^\prime)=\sigma(g; f^\prime, f).$$
\end{proof}

In fact, the coquasitriangular structure of
$\Bbbk^G{}^\tau\#_\sigma\Bbbk F$ is also related to the comodule structure of the comodules over both $\k_{\tau_f}^{G_f}$ and $\k^G$ for any $f\in F.$.
\begin{theorem}\label{prop:cqtandcomod}
Let $\Bbbk^G{}^\tau\#_\sigma\Bbbk F$ be a coquasitriangular Hopf algebra. For any $f\in F$, if $V = \operatorname{span}\{v^{(1)}, \dots, v^{(m)}\}$ and $W = \operatorname{span}\{w^{(1)}, \dots, w^{(n)}\}$ are simple right comodules over $\Bbbk^{G_f}_{\tau_f}$ and $\k^G$, with coactions $$\rho(v^{(i)})=\sum_{l=1}^m v^{(l)}\otimes (\sum_{g\in G_f}a_{li}^g p_g),$$ and $$\rho(w^{(j)})=\sum_{k=1}^n w^{(k)}\otimes (\sum_{h\in G}b_{kj}^{h} p_{h}),$$
defined for all $1\leq l, i\leq  m, 1\leq k, j\leq n$, and for all $a_{li}^g, b_{kj}^h\in\k$. Then for any $ g\in G_f, z\in T_f$, we have
$\sum\limits_{i=1}^m \sum\limits_{j=1}^n a_{ii}^g b_{jj}^{(z^{-1}gz)\triangleleft(z^{-1}\triangleright f)}=\sum\limits_{i=1}^m \sum\limits_{j=1}^n a_{ii}^{g}b_{jj}^{z^{-1}gz}.$
\end{theorem}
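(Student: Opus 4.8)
The plan is to rerun the argument of Theorem~\ref{prop:OfOf}, but now comparing the induced comodule $\tilde V$ with the $\k^G$-comodule $W$ regarded as an $H$-comodule, where $H=\k^G{}^\tau\#_\sigma\k F$. First I would observe that $\k^G$ embeds into $H$ as a Hopf subalgebra via $p_h\mapsto p_h\#1_F$: since $\tau(-,-;1_F)\equiv1$, $h\triangleleft1_F=h$ and $g\triangleright1_F=1_F$, the comultiplication of $H$ restricts on $\operatorname{span}\{p_h\#1_F\mid h\in G\}$ to $\Delta(p_h\#1_F)=\sum_{x\in G}(p_{hx^{-1}}\#1_F)\otimes(p_x\#1_F)$, which is the usual coproduct of $\k^G$. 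Hence $W$ is a finite-dimensional right $H$-comodule, and a direct computation of its character gives $\chi(W)=\sum_{h\in G}\bigl(\sum_{j=1}^n b_{jj}^h\bigr)(p_h\#1_F)$. By Lemma~\ref{lem:braided} the category of right $H$-comodules is braided monoidal, so $\operatorname{Gr}(H\text{-comod})$ is commutative; together with Lemma~\ref{lem:Gr=character} this yields $\chi(\tilde V)\chi(W)=\chi(W)\chi(\tilde V)$, an identity in $H$ (the character of a tensor product of comodules being the product of the characters in $H$).

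Next I would expand both sides. Using the formula for $\chi(\tilde V)$ from Lemma~\ref{lem:character}, the expression for $\chi(W)$ above, and the multiplication $(p_a\#u)(p_b\#u')=\delta_{a\triangleleft u,\,b}\,\sigma(a;u,u')\,p_a\#uu'$, the normalizations $\sigma(a;u,1_F)=\sigma(a;1_F,u)=1$ and $a\triangleleft1_F=a$ make the two products collapse. Writing $c_{g,z}:=\tau(z^{-1},g;f)^{-1}\tau(z^{-1}gz,z^{-1};f)$, I expect
\begin{eqnarray*}
\chi(\tilde V)\chi(W)&=&\sum_{z\in T_f}\sum_{g\in G_f}c_{g,z}\Bigl(\sum_{i=1}^m a_{ii}^g\Bigr)\Bigl(\sum_{j=1}^n b_{jj}^{(z^{-1}gz)\triangleleft(z^{-1}\triangleright f)}\Bigr)p_{z^{-1}gz}\#(z^{-1}\triangleright f),\\
\chi(W)\chi(\tilde V)&=&\sum_{z\in T_f}\sum_{g\in G_f}c_{g,z}\Bigl(\sum_{i=1}^m a_{ii}^g\Bigr)\Bigl(\sum_{j=1}^n b_{jj}^{z^{-1}gz}\Bigr)p_{z^{-1}gz}\#(z^{-1}\triangleright f),
\end{eqnarray*}
where in the first line one sums the $\delta$ against $h=(z^{-1}gz)\triangleleft(z^{-1}\triangleright f)$, and in the second against $h=z^{-1}gz$.

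Finally I would compare coefficients. As recorded in \eqref{eq:Of}, the map $z\mapsto z^{-1}\triangleright f$ is a bijection from $T_f$ onto $O_f$ (injectivity holds because $\triangleright$ is a genuine action of $G$ on $F$ and $T_f$ contains exactly one representative of each right coset of $G_f$), so $(g,z)\mapsto(z^{-1}gz,\,z^{-1}\triangleright f)$ is injective on $G_f\times T_f$ and the basis vectors $p_{z^{-1}gz}\#(z^{-1}\triangleright f)$ occurring above are pairwise distinct. Since $\{p_x\#u\mid x\in G,\,u\in F\}$ is a basis of $H$, equating the coefficient of $p_{z^{-1}gz}\#(z^{-1}\triangleright f)$ on the two sides and cancelling the nonzero scalar $c_{g,z}\in\k^\times$ gives
$$\sum_{i=1}^m a_{ii}^g\sum_{j=1}^n b_{jj}^{(z^{-1}gz)\triangleleft(z^{-1}\triangleright f)}=\sum_{i=1}^m a_{ii}^g\sum_{j=1}^n b_{jj}^{z^{-1}gz},$$
which is exactly the asserted identity.

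The computation itself is essentially bookkeeping; the points needing a little care are (i) identifying $\k^G$ inside $H$ correctly so that $W$ and its character live there, (ii) checking that $(g,z)\mapsto(z^{-1}gz,z^{-1}\triangleright f)$ is injective so that the coefficient comparison is legitimate, and (iii) noting that each $c_{g,z}$ lies in $\k^\times$ and hence may be cancelled. None of these is a genuine obstacle, so the main effort is just carrying out the two products accurately and tracking the delta-constraints.
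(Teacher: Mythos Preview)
Your proposal is correct and follows essentially the same route as the paper: the paper phrases it as ``set $f'=1$ in Theorem~\ref{prop:OfOf}'' (so $G_{f'}=G$, $\tau_{f'}$ is trivial, $T_{f'}=\{1_G\}$, and the induced comodule $\tilde W$ is just $W$ sitting inside $H$ via $p_h\mapsto p_h\#1_F$), which is exactly your embedding of $\k^G$ into $H$. Your treatment is in fact a bit more careful than the paper's, since you explicitly verify the injectivity of $(g,z)\mapsto(z^{-1}gz,\,z^{-1}\triangleright f)$ needed to compare coefficients, a step the paper leaves implicit.
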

\begin{proof}
Setting $f^\prime =1$ in Theorem \ref{prop:OfOf} and
following the same steps as in its proof, we obtain
\begin{eqnarray*}
&&\chi(\tilde{V})\chi(\tilde{W})\\
&=&\sum\limits_{i=1}^m\sum\limits_{z\in T_f}\sum\limits_{g\in G_f}\sum\limits_{j=1}^n\sum\limits_{h\in G}
\tau(z^{-1},g;f)^{-1}\tau(z^{-1}gz, z^{-1};f)\\
&&a_{ii}^{g}b_{jj}^{h}\delta_{(z^{-1}gz)\triangleleft (z^{-1}\triangleright f), h}
p_{z^{-1}gz}\# (z^{-1}\triangleright f)\\
&=&\sum\limits_{i=1}^m\sum\limits_{z\in T_f}\sum\limits_{g\in G_f}\sum\limits_{j=1}^n\sum\limits_{h\in G}
\tau(z^{-1},g;f)^{-1}\tau(z^{-1}gz, z^{-1};f)a_{ii}^{g}b_{jj}^{h}\delta_{h, z^{-1}gz}
p_{h}\#  (z^{-1}\triangleright f)\\
&=&\chi(\tilde{W})\chi(\tilde{V}).
\end{eqnarray*}
It follows that
$$\sum\limits_{i=1}^m\sum\limits_{j=1}^n a_{ii}^{g}b_{jj}^{(z^{-1}gz)\triangleleft (z^{-1}\triangleright f)}= \sum\limits_{i=1}^m \sum\limits_{j=1}^n a_{ii}^{g}b_{jj}^{z^{-1}gz}.$$
\end{proof}

\begin{corollary}
Let $\Bbbk^G{}^\tau\#_\sigma\Bbbk F$ be a coquasitriangular Hopf algebra, where $\tau$ is trivial. Suppose $W = \operatorname{span}\{w^{(1)}, \dots, w^{(n)}\}$ is a simple right comodule over $\k^G$, with coaction $$\rho(w^{(j)})=\sum_{k=1}^n w^{(k)}\otimes (\sum_{h\in G}b_{kj}^{h} p_{h}),$$
defined for all $ b_{kj}^h\in\k$, where $ 1\leq k, j\leq n, h\in G$. Then $ \sum\limits_{j=1}^n  b_{jj}^{(z^{-1}gz)\triangleleft(z^{-1}\triangleright f)}= \sum\limits_{j=1}^n b_{jj}^{z^{-1}gz}$ hold for any $ g\in G_f, z\in T_f$.
\end{corollary}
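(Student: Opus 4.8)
The plan is to obtain this corollary as a direct specialization of Theorem~\ref{prop:cqtandcomod}. That theorem gives, for \emph{any} simple right $\k^{G_f}_{\tau_f}$-comodule $V=\span\{v^{(1)},\dots,v^{(m)}\}$ and any simple right $\k^G$-comodule $W$, the identity $\sum_{i}\sum_{j}a_{ii}^{g}b_{jj}^{(z^{-1}gz)\triangleleft(z^{-1}\triangleright f)}=\sum_{i}\sum_{j}a_{ii}^{g}b_{jj}^{z^{-1}gz}$. So it suffices to exhibit one choice of $V$ for which the left-hand factor $\sum_{i=1}^m a_{ii}^{g}$ is identically $1$ (hence drops out), leaving exactly the asserted equation in the $b$'s.

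First I would use the hypothesis that $\tau$ is trivial. Then $\k^{G_f}_{\tau_f}$ is just $\k^{G_f}$ equipped with the untwisted comultiplication $\Delta(p_g)=\sum_{x\in G_f}p_{gx^{-1}}\otimes p_x$, and a one-line check shows that $c:=\sum_{g\in G_f}p_g$ satisfies $\Delta(c)=c\otimes c$ and $\varepsilon(c)=1$, i.e. $c$ is grouplike. Consequently $V:=\k v$ with $\rho(v)=v\otimes c$ is a well-defined right $\k^{G_f}_{\tau_f}$-comodule, and it is simple simply because it is one-dimensional. In the notation of Theorem~\ref{prop:cqtandcomod} this corresponds to $m=1$ and $a_{11}^{g}=1$ for all $g\in G_f$.

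Finally I would feed this $V$ together with the given simple right $\k^G$-comodule $W$ into Theorem~\ref{prop:cqtandcomod}; the conclusion collapses to $\sum_{j=1}^n b_{jj}^{(z^{-1}gz)\triangleleft(z^{-1}\triangleright f)}=\sum_{j=1}^n b_{jj}^{z^{-1}gz}$ for all $g\in G_f$ and $z\in T_f$, which is precisely the claim. There is no real obstacle in the argument; the only point that deserves a word of care is the triviality of $\tau$, which is genuinely used: when $\tau$ is nontrivial the element $\sum_{g\in G_f}p_g$ need not be grouplike in $\k^{G_f}_{\tau_f}$, so this particular one-dimensional comodule may fail to exist, and that is exactly why the hypothesis is imposed.
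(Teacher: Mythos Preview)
Your proposal is correct and follows essentially the same route as the paper: both take $V$ to be the one-dimensional $\k^{G_f}$-comodule given by the grouplike element $1_{\k^{G_f}}=\sum_{x\in G_f}p_x$ (which exists precisely because $\tau$ is trivial), so that $m=1$ and $a_{11}^g=1$ for all $g\in G_f$, and then apply Theorem~\ref{prop:cqtandcomod}.
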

\begin{proof}
Since $\tau$ is trivial, it follows that $\k^{G_f}_{\tau_f}=\k^{G_f}$ is a Hopf algebra.
Let $V = \operatorname{span}\{v\}$ be a simple $\Bbbk 1_{\Bbbk^{G_{f}}}$-comodule, i.e., $$\rho(v) = v \otimes ( \sum\limits_{x \in G_f} p_x ).$$ According to Theorem \ref{prop:cqtandcomod}, we have $$ \sum\limits_{j=1}^n  b_{jj}^{(z^{-1}gz)\triangleleft(z^{-1}\triangleright f)}= \sum\limits_{j=1}^n b_{jj}^{z^{-1}gz}$$ for any $ g\in G_f, z\in T_f$.
\end{proof}
An immediate consequence is that, under certain conditions, the coquasitriangular structure explicitly determines the relationships between the simple comodules of $\Bbbk^{G_f}_{\tau_f}$ and $\Bbbk^{G_{f^\prime}}_{\tau_{f^\prime}}$.
\begin{proposition}\label{prop:aiibjj}
Let $\Bbbk^G{}^\tau\#_\sigma\Bbbk F$ be a coquasitriangular Hopf algebra. Suppose that $u\triangleright v=v$ for any $u\in G, v\in F$, that is, $G_v=G.$ For any $f, f^\prime\in F$, if $V = \operatorname{span}\{v^{(1)}, \dots, v^{(m)}\}$ and $W = \operatorname{span}\{w^{(1)}, \dots, w^{(n)}\}$ are simple right comodules over $\Bbbk^{G}_{\tau_f}$ and $\Bbbk^{G}_{\tau_{f^\prime}}$, with coactions $$\rho(v^{(i)})=\sum_{l=1}^m v^{(l)}\otimes (\sum_{g\in G}a_{li}^g p_g),$$ and $$\rho(w^{(j)})=\sum_{k=1}^n w^{(k)}\otimes (\sum_{h\in G}b_{kj}^{h} p_{h}),$$
defined for all $1\leq l, i\leq  m, 1\leq k, j\leq n$, and for all $a_{li}^g, b_{kj}^h\in\k$. Then for any $ g\in G,$ we have
$\sum\limits_{i=1}^m \sum\limits_{j=1}^n a_{ii}^g b_{jj}^{g\triangleleft f}\sigma(g; f, f^\prime)=\sum\limits_{i=1}^m \sum\limits_{j=1}^n a_{ii}^{g\triangleleft f^\prime }b_{jj}^g \sigma(g; f^\prime, f).$
\end{proposition}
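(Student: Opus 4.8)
The plan is to specialize the argument used in the proof of Theorem~\ref{prop:OfOf} to the present situation. First I would record the simplifications forced by the hypothesis $u\triangleright v=v$: for every $f\in F$ one has $G_f=G$, so that $\{1_G\}$ is a complete set of right coset representatives, i.e. $T_f=\{1_G\}$, and every orbit $O_f=\{f\}$ is a singleton. Since $\Bbbk^G{}^\tau\#_\sigma\Bbbk F$ is coquasitriangular, Theorem~\ref{prop:OfOf} (equivalently Corollary~\ref{coro:Fnonabelian}) gives $O_fO_{f^\prime}=O_{f^\prime}O_f$, hence $ff^\prime=f^\prime f$; in other words $F$ must be abelian. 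This identity $ff^\prime=f^\prime f$ is precisely what will make the coefficient comparison at the end legitimate.

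Next I would compute the irreducible characters of the induced comodules $\tilde{V}$ and $\tilde{W}$ via Lemma~\ref{lem:character}. Because $T_f=\{1_G\}$, $z^{-1}\triangleright f=f$, and $\tau(1_G,g;f)=\tau(g,1_G;f)=1$, all the $\tau$-factors disappear and the formula collapses to $\chi(\tilde{V})=\sum_{i=1}^m\sum_{g\in G}a_{ii}^g\,p_g\#f$ and, similarly, $\chi(\tilde{W})=\sum_{j=1}^n\sum_{h\in G}b_{jj}^h\,p_h\#f^\prime$.

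Then I would multiply these characters in $\Bbbk^G{}^\tau\#_\sigma\Bbbk F$ using the rule $(p_g\#f)(p_h\#f^\prime)=\delta_{g\triangleleft f,\,h}\,\sigma(g;f,f^\prime)\,p_g\#ff^\prime$ from Definition~\ref{def:H}. Summing out the Kronecker delta yields $\chi(\tilde{V})\chi(\tilde{W})=\sum_{i,j}\sum_{g\in G}a_{ii}^g b_{jj}^{g\triangleleft f}\,\sigma(g;f,f^\prime)\,p_g\#ff^\prime$, and symmetrically, after relabelling, $\chi(\tilde{W})\chi(\tilde{V})=\sum_{i,j}\sum_{g\in G}a_{ii}^{g\triangleleft f^\prime} b_{jj}^{g}\,\sigma(g;f^\prime,f)\,p_g\#f^\prime f$. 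By Lemmas~\ref{lem:braided} and~\ref{lem:Gr=character}, the Grothendieck ring of the comodule category is commutative, so $\chi(\tilde{V})\chi(\tilde{W})=\chi(\tilde{W})\chi(\tilde{V})$; since $ff^\prime=f^\prime f$ and the set $\{p_x\#u\mid x\in G,\ u\in F\}$ is linearly independent, comparing the coefficient of $p_g\#ff^\prime$ on the two sides gives exactly the asserted identity.

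There is no deep difficulty here — the computation is essentially bookkeeping — but the one point requiring care is checking that the $\tau$-contributions in Lemma~\ref{lem:character} really do vanish (so that the characters are the naive sums above) and that the standing hypothesis forces $F$ to be abelian, which is what justifies equating the coefficients of $p_g\#ff^\prime$ coming from $\chi(\tilde{V})\chi(\tilde{W})$ and from $\chi(\tilde{W})\chi(\tilde{V})$.
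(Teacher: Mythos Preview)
Your proposal is correct and follows essentially the same approach as the paper's own proof: both specialize the character computation of Theorem~\ref{prop:OfOf} to the case $T_f=\{1_G\}$, invoke the commutativity of the Grothendieck ring via Lemmas~\ref{lem:braided} and~\ref{lem:Gr=character}, use Theorem~\ref{prop:OfOf} to obtain $ff^\prime=f^\prime f$, and then compare coefficients of $p_g\#ff^\prime$. Your write-up is in fact more explicit than the paper's in verifying that the $\tau$-factors from Lemma~\ref{lem:character} trivialize, but the underlying argument is identical.
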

\begin{proof}
An argument similar to the one used in the proof of Theorem \ref{prop:OfOf} shows that
\begin{eqnarray*}
&&\chi(\tilde{V})\chi(\tilde{W})\\
&=&\sum\limits_{i=1}^m\sum\limits_{g\in G}\sum\limits_{j=1}^n\sum\limits_{h\in G}
a_{ii}^{g}b_{jj}^{h}\delta_{g\triangleleft f, h}
\sigma(g; f, f^\prime)p_{g}\# f f^{\prime},\\
&=&\sum\limits_{i=1}^m\sum\limits_{g\in G}\sum\limits_{j=1}^n\sum\limits_{h\in G}
a_{ii}^{h}b_{jj}^{g} \delta_{g\triangleleft f^\prime, h}\sigma(g; f^\prime, f)p_{g}\# f^\prime f\\
&=&\chi(\tilde{W})\chi(\tilde{V}).
\end{eqnarray*}
According to Theorem \ref{prop:OfOf}, we have $ff^\prime=f^\prime f.$
It is straightforward to show that  $$\sum\limits_{i=1}^m \sum\limits_{j=1}^n a_{ii}^g b_{jj}^{g\triangleleft f}\sigma(g; f, f^\prime)=\sum\limits_{i=1}^m \sum\limits_{j=1}^n a_{ii}^{g\triangleleft f^\prime }b_{jj}^g \sigma(g; f^\prime, f).$$
\end{proof}

Let $\pi:G\rightarrow G^\prime$ be an epimorphism of groups whose codomain $G^\prime$ is an abelian group. Note that we have a coalgebra embedding from $\k^{G^\prime}$ to $\k^G$. Furthermore, the fact that $G^\prime$ is abelian ensures that all the simple comodules of $\k^{G^\prime}$ are one-dimensional. Based on this observation, we obtain the following corollary.
\begin{corollary}\label{coro:agbg}
Let $\pi: G \rightarrow G^\prime$ be a surjective homomorphism onto an abelian group $G^\prime$. Suppose that $\Bbbk^G\#\Bbbk F$ is a coquasitriangular Hopf algebra and $u\triangleright v=v$ for any $u\in G, v\in F.$
If $V = \operatorname{span}\{v\}$ and $W = \operatorname{span}\{w\}$ are simple right comodules over $\Bbbk^{G^\prime}$, with coactions $$\rho(v)= v\otimes (\sum_{g\in G^\prime}a^g p_g),$$ and $$\rho(w)= w\otimes (\sum_{h\in G^\prime}b^{h} p_{h}),$$
defined for all $a^g, b^h\in\k$. Then for any $f, f^\prime\in F, g\in G^\prime$, we have
$ a^g b^{g\triangleleft f}=  a^{g\triangleleft f^\prime }b^g.$
\end{corollary}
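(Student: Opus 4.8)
The plan is to obtain the identity as a direct specialization of Proposition~\ref{prop:aiibjj}, after transporting the $\k^{G'}$-comodules $V$ and $W$ to $\k^G$-comodules along the coalgebra embedding $\k^{G'}\hookrightarrow\k^G$, which sends $p_{g'}$ to $\sum_{g\in\pi^{-1}(g')}p_g$. Composing the coactions of $V=\span\{v\}$ and $W=\span\{w\}$ with this embedding makes them $1$-dimensional (hence simple) $\k^G$-comodules with $\rho(v)=v\otimes\sum_{g\in G}a^{\pi(g)}p_g$ and $\rho(w)=w\otimes\sum_{h\in G}b^{\pi(h)}p_h$. Since $\sigma$ and $\tau$ are trivial in $\k^G\#\k F$, we have $\k^{G}_{\tau_f}=\k^G$ for every $f\in F$; and the hypothesis $u\triangleright v=v$ gives $G_v=G$ for all $v\in F$. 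Thus $V$ and $W$ are legitimate inputs for Proposition~\ref{prop:aiibjj}, viewed as simple comodules over $\k^{G}_{\tau_f}$ and $\k^{G}_{\tau_{f'}}$.

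Next I would invoke Proposition~\ref{prop:aiibjj} with $m=n=1$, so that $a^g_{11}=a^{\pi(g)}$ and $b^h_{11}=b^{\pi(h)}$ for $g,h\in G$; since $\sigma$ is trivial the factors $\sigma(g;f,f')$ and $\sigma(g;f',f)$ drop out, and the conclusion of that proposition reads
$$
a^{\pi(\hat g)}\,b^{\pi(\hat g\triangleleft f)}=a^{\pi(\hat g\triangleleft f')}\,b^{\pi(\hat g)}\qquad\text{for all }\hat g\in G,\ f,f'\in F.
$$

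Finally I would push this relation down to $G'$. Because $u\triangleright v=v$ for all $u\in G$, $v\in F$, the matched-pair axiom $(gg')\triangleleft v=(g\triangleleft(g'\triangleright v))(g'\triangleleft v)$ collapses to $(gg')\triangleleft v=(g\triangleleft v)(g'\triangleleft v)$, so $-\triangleleft v$ is a group automorphism of $G$ for each $v\in F$; this $F$-action descends along $\pi$ to the $F$-action on $G'$ written $-\triangleleft v$ in the statement, i.e.\ $\pi(\hat g\triangleleft v)=\pi(\hat g)\triangleleft v$. Setting $g=\pi(\hat g)$ in the displayed identity then yields $a^gb^{g\triangleleft f}=a^{g\triangleleft f'}b^g$ for all $g\in G'$, $f,f'\in F$. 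The only point that deserves a moment's care is this last descent: one needs $\ker\pi$ to be stable under the $F$-action $\triangleleft$ (which is automatic, for instance, when $G'=G/[G,G]$, and is implicit whenever one writes $g\triangleleft f$ for $g\in G'$). Everything else is bookkeeping around the embedding $\k^{G'}\hookrightarrow\k^G$ and the triviality of $\sigma$ and $\tau$, so there is no substantial obstacle once Proposition~\ref{prop:aiibjj} is in hand.
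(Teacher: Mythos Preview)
Your proposal is correct and follows exactly the paper's approach: use the coalgebra embedding $\k^{G'}\hookrightarrow\k^G$ to view $V,W$ as simple $\k^G$-comodules and then apply Proposition~\ref{prop:aiibjj}. In fact you are more careful than the paper, which leaves the descent step (that $\pi(\hat g\triangleleft v)=\pi(\hat g)\triangleleft v$, relying on $\ker\pi$ being $\triangleleft$-stable) completely implicit.
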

\begin{proof}
Observe that there is a coalgebra embedding of $\k^{G^\prime}$ into $\k^G$. Hence, every simple right $\k^{G^\prime}$-comodule is also a simple right $\k^G$-comodule; in particular, this applies to $V$ and $W$. Applying Proposition \ref{prop:aiibjj}, we can prove this corollary.
\end{proof}
\begin{corollary}\label{ag=agf}
Let $\Bbbk^G\#\Bbbk F$ be a coquasitriangular Hopf algebra and $u\triangleright v=v$ for any $u\in G, v\in F.$ Suppose that $V = \operatorname{span}\{v\}$ is a $1$-dimensional simple right $\k^G$-comodule whose comodule structure is
$$\rho(v)= v\otimes (\sum_{g\in G}a^g p_g).$$ Then $a^g=a^{g\triangleleft f}$ for any $g\in G, f\in F$.
\end{corollary}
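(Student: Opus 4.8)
The plan is to read the assertion off from Proposition \ref{prop:aiibjj} applied to $V$ against itself. The one thing to check first is that every scalar $a^g$ is nonzero. Indeed, for the one-dimensional comodule $(V,\rho)$ the coassociativity and counitality of $\rho$ say exactly that $c:=\sum_{g\in G}a^gp_g$ is a grouplike element of $\k^G$, i.e.\ $\Delta_{\k^G}(c)=c\otimes c$ and $\varepsilon(c)=1$. Using $\Delta_{\k^G}(p_g)=\sum_{xy=g}p_x\otimes p_y$, this amounts to $a^{xy}=a^xa^y$ for all $x,y\in G$ together with $a^{1_G}=1$, so $g\mapsto a^g$ is a group homomorphism $G\to(\k^\times,\cdot)$; in particular $a^g\neq 0$ for every $g\in G$. (A one-dimensional comodule is automatically simple, so Proposition \ref{prop:aiibjj} does apply to $V$.)

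Next I would invoke Proposition \ref{prop:aiibjj}. Its hypotheses are met: $\k^G\#\k F$ is coquasitriangular by assumption, and $u\triangleright v=v$ for all $u\in G,\ v\in F$ gives $G_v=G$ for every $v$. Since $\tau$ and $\sigma$ are trivial, $\k^G_{\tau_f}=\k^G$ for every $f\in F$ and all the $\sigma$-values occurring in the statement equal $1$, so $V$ is a simple right $\k^G_{\tau_f}$-comodule for each $f$. Taking both comodules in Proposition \ref{prop:aiibjj} to be $V$ (hence $m=n=1$ and $a_{11}^g=b_{11}^g=a^g$) and the pair of group elements to be $(f,1_F)$, the conclusion of that proposition becomes
\[
a^g\,a^{g\triangleleft f}=a^{g\triangleleft 1_F}\,a^g=(a^g)^2\qquad\text{for all }g\in G,\ f\in F,
\]
where the last step uses $g\triangleleft 1_F=g$ from Remark \ref{lem:actioninverse}(2).

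Finally, cancelling the nonzero scalar $a^g$ gives $a^{g\triangleleft f}=a^g$ for all $g\in G$ and $f\in F$, which is the assertion. (Alternatively, one can pull $V$ back along the abelianization $G\twoheadrightarrow G/[G,G]$ and quote Corollary \ref{coro:agbg} with $W=V$; this is legitimate because the triviality of $\triangleright$ forces each map $-\triangleleft f$ to be a group endomorphism of $G$, hence it descends to $G/[G,G]$, and the pullback recovers the original indexing by $g\in G$.) There is no real obstacle beyond the bookkeeping of specializing Proposition \ref{prop:aiibjj}; the only genuinely needed extra input is the nonvanishing $a^g\neq 0$, which is precisely what licenses the cancellation.
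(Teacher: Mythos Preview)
Your proof is correct. Both you and the paper apply Proposition \ref{prop:aiibjj}, but with a different choice of the second comodule: the paper takes $W$ to be the trivial one-dimensional $\k^G$-comodule (i.e.\ $b^h=1$ for all $h$, the comodule supported on $1_{\k^G}=\sum_{g\in G}p_g$), which yields $a^g=a^{g\triangleleft f'}$ directly with no cancellation needed, whereas you take $W=V$ and must first establish $a^g\neq 0$ via the grouplike/character argument in order to cancel. Your route is perfectly valid and the extra observation that $g\mapsto a^g$ is a character of $G$ is correct and independently useful; the paper's choice is slightly more economical since it sidesteps that verification entirely.
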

\begin{proof}
Note that $1_{\k^G}=\sum\limits_{g\in G}p_g$.  Let $W=\span\{w\}$ be a simple $\k 1_{\k^G}$-comodule. According to Proposition \ref{prop:aiibjj}, we have $a^g=a^{g\triangleleft f}$ for any $g\in G, f\in F.$
\end{proof}

\begin{example}\label{ex:2}
Let $\mathbb{Z}_3=\langle g\mid g^3=1\rangle$. Define group actions $\mathbb{Z}_{3}\xleftarrow{\triangleleft}\mathbb{Z}_{3}\times \mathbb{Z} \xrightarrow{\triangleright}\mathbb{Z}$ on the sets by
$$
g^{i}\triangleright j= j,\;\; g^{i}\triangleleft j=\left\{
\begin{aligned}
g^{i}~,~~~ &\text{if} ~~~  j ~~\text{is even}; \\
g^{3-i},~~~&\text{if} ~~~  j ~~\text{is odd},
\end{aligned}
\right.
$$
for any $0\leq i\leq 2$, $j\in \mathbb{Z}$. The pair of groups $(\mathbb{Z}, \mathbb{Z}_{3})$, together with the mutual group actions $ \mathbb{Z}_{3} \xleftarrow{\triangleleft} \mathbb{Z}_{3} \times \mathbb{Z} \xrightarrow{\triangleright} \mathbb{Z} $, forms a matched pair. Note that the algebra $\k^{\mathbb{Z}_3}$ has only three irreducible corepresentations, denoted $U=\operatorname{span}\{u\}, V=\operatorname{span}\{v\}, W=\operatorname{span}\{w\}$, with their respective comodule structures given by:
\begin{eqnarray*}
\rho(u)&=&u\otimes (p_1+\omega p_g+w^2 p_{g^2}),\\
\rho(v)&=&v\otimes (p_1+\omega^2 p_g+w p_{g^2}),\\
\rho(w)&=&w\otimes (p_1+ p_g+ p_{g^2}),
\end{eqnarray*}
where $\omega$ is a $3$-th primitive root of unit.
An application of Corollary \ref{ag=agf} to $V$ and an arbitrary odd number $j$ establishes that $\k^{\mathbb{Z}_3}\# \mathbb{Z}$ admits no coquasitriangular structure.
\end{example}
\begin{remark}
According to \cite[lemma 5.1]{YLZZ25}, the Hopf algebras constructed in Examples \ref{ex:1} and \ref{ex:2} are in fact group algebras, and as is well-known, a group algebra admits coquasitriangular structures if and only if the underlying group is abelian. Since the two abelian extensions in Examples \ref{ex:1} and \ref{ex:2} are not central, the associated Hopf algebras necessarily noncommutative and thus admit no coquasitriangular structure.
\end{remark}
Suppose that $G$ is a non-abelian group and $u\triangleright v=v$ for any $u\in G, v\in F.$ It can be shown that at this point, $\Bbbk^G\#\Bbbk F$ is a cosemisimple Hopf algebra that is not a group algebra.
For instance, the Hopf algebra $\k^{\mathcal{Q}_8}\# \Bbb{D}_\infty$ presented in Example \ref{ex:q8dinf} is not a group algebra.
In what follows, we introduce a new example of a cosemisimple Hopf algebra that is not a group algebra and admits no coquasitriangular structure.
\begin{example}
Let $\mathcal{Q}_8$ be the quaternion group $\langle r, s\mid r^4=1, r^2=s^2, sr=r^{-1}s\rangle$. Define group actions $\mathcal{Q}_{8}\xleftarrow{\triangleleft}\mathcal{Q}_8\times \mathbb{Z} \xrightarrow{\triangleright}\mathbb{Z}$ on the sets by
\begin{eqnarray*}
u\triangleright j= j,\;\; r\triangleleft j=\left\{
\begin{aligned}
r,~~~ &\text{if} ~~~  j ~~\text{is even}; \\
s,~~~&\text{if} ~~~  j ~~\text{is odd},
\end{aligned}
\right.
\;\;
s\triangleleft j=\left\{
\begin{aligned}
s,~~~ &\text{if} ~~~  j ~~\text{is even}; \\
r,~~~&\text{if} ~~~  j ~~\text{is odd},
\end{aligned}
\right.\;\;
\end{eqnarray*}
for any $u\in \mathcal{Q}_8$, $j\in \mathbb{Z}$. The mutual group actions $ \mathcal{Q}_8 \xleftarrow{\triangleleft} \mathcal{Q}_{8} \times \mathbb{Z} \xrightarrow{\triangleright} \mathbb{Z} $ define a matched pair structure between $\mathbb{Z}$ and $\mathcal{Q}_8$.
Let $\mathcal{K}_4$ be the Kelin four-group $\langle a, b\mid a^2=b^2=1, ab=ba\rangle$. We have a coalgebra embedding from $\k\mathcal{K}_4$ to $\k^{\mathcal{Q}_8}$. Now consider the group-like element
$X=\sum\limits_{k=0}^3\sum\limits_{l=0}^1(-1)^kp_{r^ks^l}$
in $\k\mathcal{K}_4.$ We have the following comodule $V=\operatorname{span}\{v\}$, with comodule structures given by
$$\rho(v)=v\otimes X.$$
 Let $j$ be an odd number, and we consider the situation where $a^{r}=-1$ and $ a^{r\triangleleft j }=a^s=1$. It follows from Corollary \ref{ag=agf}
that $\k^{\mathcal{Q}_8}\# \k\Bbb{Z}$ admits no coquasitriangular structure.
\end{example}
Recall that a \textit{quasitriangular Hopf algebra} (\cite[Definition 10.1.5]{Mon93}) is a pair $(H, R)$, where $H$ is a Hopf algebra and $R=\sum R^{(1)}\otimes R^{(2)}$ is an invertible element in $H\otimes H$ such that
\begin{itemize}
\item[(1)]$(\Delta\otimes \id)(R)=R_{13}R_{23},\;\;(\id\otimes \Delta(R))=R_{13}R_{12};$
\item[(2)]$\Delta^{op}(h) R=R\Delta(h)\;\text{for any } h\in H. $
\end{itemize}
Here by definition $R_{12}=\sum R^{(1)}\otimes R^{(2)} \otimes 1, R_{13}=\sum R^{(1)}\otimes 1\otimes R^{(2)}$ and $R_{23}=\sum 1\otimes R^{(1)}\otimes R^{(2)}$.

For any $g, g^\prime\in G$, define
$O^{\prime}_g=\{g\triangleleft f\mid f\in F\},$ and
$ O^{\prime}_gO^{\prime}_{g^\prime}=\{xy\mid x\in O^{\prime}_g, y\in O^{\prime}_{g^\prime}\}.$
In the case of finite groups $F$ and $G$, a necessary condition for $\Bbbk^G{}^\tau\#_{\sigma}\Bbbk F$ to be quasitriangular is derived.
\begin{proposition}\label{prop:QT=OgOg}
Let $F, G$ be finite groups and $\Bbbk^G{}^\tau\#_{\sigma}\Bbbk F$ be a quasitriangular Hopf algebra.
Then $O^{\prime}_gO^{\prime}_{g^\prime}=O^{\prime}_{g^\prime}O^{\prime}_g$ for any $g, g^\prime \in G.$
\end{proposition}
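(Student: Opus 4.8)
The plan is to dualize and then invoke Theorem~\ref{prop:OfOf}. Since $F$ and $G$ are both finite, $H:=\Bbbk^G{}^\tau\#_{\sigma}\Bbbk F$ is a finite-dimensional Hopf algebra, so dualizing the abelian extension $\Bbbk^G\xrightarrow{\iota}H\xrightarrow{\pi}\Bbbk F$ yields an abelian extension $\Bbbk^F\xrightarrow{\pi^*}H^*\xrightarrow{\iota^*}\Bbbk G$ of $\Bbbk G$ by $\Bbbk^F$ (here $(\Bbbk^G)^*=\Bbbk G$ and $(\Bbbk F)^*=\Bbbk^F$, and the exactness conditions transfer under dualization). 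By Masuoka's classification \cite{Mas02}, any such extension is isomorphic to $\Bbbk^F\#_{\sigma^\prime,\tau^\prime}\Bbbk G$ for a suitable matched pair of the finite groups $F$ and $G$ and suitable normalized cocycles $\sigma^\prime,\tau^\prime$; in the notation of Definition~\ref{def:H} this reads $H^*\cong\Bbbk^F{}^{\tau^\prime}\#_{\sigma^\prime}\Bbbk G$. The matched pair underlying $H^*$ is the one coming from the same exact factorization of groups as the one underlying $H$, with the two factors interchanged (and, depending on conventions, a harmless twist by group inversion); in particular, if $O^{H^*}_g$ denotes the orbit of $g\in G$ under the resulting action of $F$ on $G$, then $O^{H^*}_g=(O^\prime_{g^{-1}})^{-1}$, the set of inverses of the elements of $O^\prime_{g^{-1}}=\{g^{-1}\triangleleft f\mid f\in F\}$.

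Next I would use the standard correspondence between quasitriangular and coquasitriangular structures on a finite-dimensional Hopf algebra: if $(H,R)$ is quasitriangular with $R\in H\otimes H$, then, viewing $R$ as a bilinear form on $H^*$ via the identification $H\otimes H\cong(H^*\otimes H^*)^*$, the pair $(H^*,R)$ is coquasitriangular (see \cite{Mon93}). On the categorical side this is simply the fact that the braided monoidal category $H\text{-mod}$ is identified with $H^*\text{-comod}$ via the tensor equivalence $H\text{-mod}\simeq H^*\text{-comod}$, combined with Lemma~\ref{lem:braided}. Hence $H^*=\Bbbk^F{}^{\tau^\prime}\#_{\sigma^\prime}\Bbbk G$ is a coquasitriangular Hopf algebra.

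Now I would apply Theorem~\ref{prop:OfOf} to $H^*$, in which the distinguished finite group is $F$ and the second group is $G$: the theorem gives $O^{H^*}_g\,O^{H^*}_{g^\prime}=O^{H^*}_{g^\prime}\,O^{H^*}_g$ for all $g,g^\prime\in G$. Substituting $O^{H^*}_g=(O^\prime_{g^{-1}})^{-1}$ and taking inverses of both sides of this equality of subsets of $G$ (using $(AB)^{-1}=B^{-1}A^{-1}$) turns it into $O^\prime_{g^{-1}}\,O^\prime_{g^{\prime-1}}=O^\prime_{g^{\prime-1}}\,O^\prime_{g^{-1}}$; since $g,g^\prime$ range over all of $G$, this is exactly the assertion $O^\prime_g O^\prime_{g^\prime}=O^\prime_{g^\prime}O^\prime_g$ for all $g,g^\prime\in G$.

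The main obstacle is the identification of $H^*$ in the first step: one must check carefully that dualizing the crossed product / crossed coproduct structure of $H$ in Definition~\ref{def:H} produces precisely the crossed coproduct / crossed product structure of $\Bbbk^F\#\Bbbk G$ attached to the transposed matched pair, with the correct normalized cocycles $\sigma^\prime,\tau^\prime$, and that the resulting $F$-orbits in $G$ are the sets $(O^\prime_{g^{-1}})^{-1}$. This bookkeeping is essentially contained in \cite{Mas02} and is compatible with the finite case of \cite{YLZZ25}, so it can be quoted rather than rederived; granting it, the remaining steps above are purely formal.
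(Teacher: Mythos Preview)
Your proposal is correct and follows essentially the same strategy as the paper: dualize, identify $H^*$ as another abelian extension of the same type with the roles of $F$ and $G$ swapped, and apply Theorem~\ref{prop:OfOf}. The paper carries out the identification explicitly as $H^*\cong\Bbbk^{F^{\mathrm{op}}}{}^{\tilde\sigma}\#_{\tilde\tau}\Bbbk G^{\mathrm{op}}$ with $\tilde\sigma(f,f';g)=\sigma(g;f',f)$ and $\tilde\tau(f;g,g')=\tau(g',g;f)$, so the relevant orbits are literally the $O'_g$ and your inversion twist is unnecessary (indeed $(O'_{g^{-1}})^{-1}=O'_g$ by Remark~\ref{lem:actioninverse}(4)).
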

\begin{proof}
The matched pair condition for $(F, G, \triangleleft, \triangleright)$ directly implies that $(G^{\mathrm{op}}, F^{\mathrm{op}}, \triangleright, \triangleleft)$ is also a matched pair.
It is straightforward to show that $$(\k^G{}^\tau\#_\sigma \k F)^*\cong (\k^{F^{op}}{}^{\tilde{\sigma}}\#_{\tilde{\tau}}\k G^{op})^{op\;cop}$$ as Hopf algebras, where the cocycles $\tilde{\sigma}$ and $\tilde{\tau}$ are given, for all $g, g' \in G$ and $f, f' \in F$, by $\tilde{\sigma}(f, f^\prime ;g):=\sigma(g; f^\prime ,f)$ and $\tilde{\tau}(f; g, g^\prime):=\tau(g^\prime, g;f)$. It is clear that $$(\k^G{}^\tau\#_\sigma \k F)^*\cong \k^{F^{op}}{}^{\tilde{\sigma}}\#_{\tilde{\tau}}\k G^{op}$$ as Hopf algebras and thus $\k^{F^{op}}{}^{\tilde{\sigma}}\#_{\tilde{\tau}}\k G^{op}$ is a coquasitriangular Hopf algebra. According to Theorem \ref{prop:OfOf}, we have $$O^{\prime}_gO^{\prime}_{g^\prime}=O^{\prime}_{g^\prime}O^{\prime}_g$$ for any $g, g^\prime \in G.$
\end{proof}

\begin{remark}
When $F$ and $G$ are finite groups, all results in this section hold in a corresponding quasitriangular form. In other words, this allows us to derive corresponding results in the case where $\Bbbk^G{}^\tau\#_{\sigma}\Bbbk F$
 is a quasitriangular Hopf algebra.
\end{remark}

\begin{example}
Let $\mathcal{S}_3=\{(1), (1\;2), (1\;3), (2\;3), (1\;2\;3), (1\;3\;2) \}$ be the symmetric group of degree $3$. Define group actions $\mathbb{Z}_{2}\xleftarrow{\triangleleft}\mathbb{Z}_{2}\times \mathcal{S}_3 \xrightarrow{\triangleright}\mathcal{S}_3$ on the sets by
$$
x\triangleleft \nu =x,\;\; 1\triangleright \nu=\nu,\;\; g\triangleright \nu= (1\;2)\nu(1\;2),
$$
for any $x\in\mathbb{Z}_2, \nu\in \mathcal{S}_3.$ The pair of groups $(\mathcal{S}_3, \mathbb{Z}_{2})$, together with the mutual group actions forms a matched pair. Thus, $\k^{\mathbb{Z}_2}\# \mathcal{S}_3$ is a Hopf algebra, which is precisely the dual Hopf algebra of $A_+$ defined in \cite[Definition 4.1]{Fuk97}.
It is well-known that the Hopf algebra $A_+$ is self-dual and admits a quasitriangular structure; consequently, it also admits a coquasitriangular structure.
 This example illustrates that the existence of a coquasitriangular structure on
$\Bbbk^G{}^\tau\#_{\sigma}\Bbbk F$ is possible even for a non-abelian group
$F$, as long as the condition $O_f O_{f^\prime} = O_{f^\prime} O_f$ (for any $f, f^\prime \in F$) from Theorem \ref{prop:OfOf} is satisfied.
\end{example}
\section{Coquasitriangular structures on $\Bbbk^G{}^\tau\#_{\sigma}\Bbbk F$}\label{Section 4}
In this section, let $G$ be a finite group and $F$ an arbitrary group. We establish a characterization of the coquasitriangular structures on $\Bbbk^G{}^\tau\#_{\sigma}\Bbbk F$.

First, based on the definition of a coquasitriangular structure, we state the following proposition.
\begin{proposition}\label{prop:CQTstrcuture}
Let $H=\Bbbk^G{}^\tau\#_{\sigma}\Bbbk F$ and $R: H\otimes H\rightarrow \Bbbk$ is a bilinear form on $H$. Then $(H, R)$ is coquasitriangular if and only if the following conditions are fulfilled:
  \begin{eqnarray*}
  (CQT0):&&\sum_{x\in G}R(p_x\#1, p_{g}\# f)=\sum_{x\in G}R(p_{g}\# f, p_x\#1)=\delta_{g, 1},\\
 (CQT1):&& \delta_{h\triangleleft f^\prime,l}\sigma(h; f^\prime ,f^{\prime\prime})R(p_g\# f, p_h\# f^\prime f^{\prime\prime})\\
 &=&\sum\limits_{x\in G}\tau(gx^{-1}, x; f)R(p_{gx^{-1}}\#(x\triangleright f), p_l\# f^{\prime\prime})R(p_x\# f, p_h\# f^\prime),\\
  (CQT2):&&\delta_{g\triangleleft f, h}\sigma(g; f, f^\prime)R(p_g\# ff^\prime, p_l\# f^{\prime\prime})\\
  &=&\sum_{x\in G}\tau(lx^{-1}, x;f^{\prime\prime})R(p_g\# f, p_{lx^{-1}}\#(x\triangleright f^{\prime\prime})) R(p_h\#f^\prime, p_x \# f^{\prime\prime}),\\
  (CQT3):&&\tau(gl^{-1}, l; f)\tau(h(l\triangleleft f)^{-1}, l\triangleleft f;f^\prime) R(p_{gl^{-1}}\# (l\triangleright f),p_{h(l\triangleleft f)^{-1}}\#((l\triangleleft f)\triangleright f^{\prime}))\\
&&\sigma(l; f, f^\prime) ff^\prime\\
&=&\tau((h\triangleleft f^\prime)((l^{-1}h)\triangleleft f^\prime)^{-1},  ((l^{-1}h)\triangleleft f^\prime)(h\triangleleft f^\prime)^{-1} g; f)\tau(l, l^{-1}h; f^{\prime})
\\&&R(p_{((l^{-1}h)\triangleleft f^\prime)(h\triangleleft f^\prime)^{-1}g}\# f, p_{l^{-1}h}\# f^\prime)\\
&&\sigma(l; (l^{-1}h)\triangleright f^\prime, (((l^{-1}h)\triangleleft f^\prime)(h\triangleleft f^\prime)^{-1} g)\triangleright f)\\
&&((l^{-1}h)\triangleright f^\prime)((((l^{-1}h)\triangleleft f^\prime)(h\triangleleft f^\prime)^{-1} g)\triangleright f)
  \end{eqnarray*}
  for any $g, h, l\in G$ and $f, f^\prime, f^{\prime\prime}\in F.$
  Moreover, $H$ is cotriangular provided that
   \begin{eqnarray*}
   (CQT4):&&\sum_{x,y\in G} \tau(gx^{-1}, x; f)\tau(hy^{-1}, y; f^{\prime})R(p_{gx^{-1}}\# (x\triangleright f), p_{hy^{-1}}\#(y\triangleright f^{\prime}))\;\;\;\;\;\;\;\;\;\;\;\;\\
   &&R(p_y\# f^\prime, p_x\# f)\\
   &=&\delta_{g,1_G}\delta_{h,1_G}
   \end{eqnarray*}
is also fulfilled for any $g,h\in G, f, f^\prime \in F.$
\end{proposition}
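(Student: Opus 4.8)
\emph{Proof strategy.} The plan is to transport the abstract axioms of coquasitriangularity onto the distinguished basis $\{p_g\#f\mid g\in G,\ f\in F\}$ of $H$. Since this set is a $\k$-basis, any identity that is multilinear in its arguments and assembled from $R$, the multiplication, $\Delta$ and $\varepsilon$ holds on all of $H$ if and only if it holds on all tuples of basis vectors; moreover $G$ is finite, so every sum over $G$ occurring in the formulas of Definition \ref{def:H} is finite and substitution causes no convergence problem when $F$ is infinite. Accordingly one evaluates \eqref{CQT1} on triples $(p_g\#f,\,p_h\#f',\,p_l\#f'')$ and \eqref{CQT3}, \eqref{CQT4} on pairs $(p_g\#f,\,p_h\#f')$, expands each coproduct and each product via
$$\Delta(p_a\#u)=\sum_{x\in G}\tau(ax^{-1},x;u)\,p_{ax^{-1}}\#(x\triangleright u)\otimes p_x\#u,\qquad (p_a\#u)(p_b\#v)=\delta_{a\triangleleft u,\,b}\,\sigma(a;u,v)\,p_a\#uv,$$
and reads off coefficients in the basis $\{p_x\#w\}$. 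As the basis spans $H$, the resulting scalar (resp. basis-wise) identities, quantified over all indices, are equivalent to the original axioms, which is the asserted equivalence.

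Carrying this out, $(CQT1)$ and $(CQT2)$ are the two equalities of \eqref{CQT1} written in these terms: with $g=p_g\#f$, $h=p_h\#f'$, $l=p_l\#f''$, expanding $R(g,hl)=\sum R(g_{(1)},l)R(g_{(2)},h)$ and making explicit the factor $\delta_{h\triangleleft f',l}\sigma(h;f',f'')$ from $hl$ and the factors $\tau(gx^{-1},x;f)$ from $\Delta(g)$ produces $(CQT1)$, and expanding $R(gh,l)=\sum R(g,l_{(1)})R(h,l_{(2)})$ produces $(CQT2)$ symmetrically. Condition $(CQT0)$ is the normalization $R(1_H,-)=\varepsilon=R(-,1_H)$, where $1_H=\sum_{x\in G}p_x\#1$: it holds for any coquasitriangular structure (it is a formal consequence of convolution-invertibility together with \eqref{CQT1}), and conversely, once $(CQT1)$ and $(CQT2)$ hold, $(CQT0)$ is exactly what makes $R$ convolution-invertible — a two-sided inverse being $b\otimes c\mapsto R(S(b),c)$, as one checks using $(CQT2)$ and the antipode formula of Definition \ref{def:H}. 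Finally $(CQT4)$ is \eqref{CQT4} on $(p_g\#f,\,p_h\#f')$: expanding both coproducts gives the displayed double sum over $x,y\in G$, while the right-hand side is $\varepsilon(p_g\#f)\varepsilon(p_h\#f')=\delta_{g,1_G}\delta_{h,1_G}$.

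The step that needs care is $(CQT3)$, the image of the hexagon identity \eqref{CQT3} at $g=p_g\#f$, $h=p_h\#f'$. Expanding the left side $\sum R(g_{(1)},h_{(1)})\,g_{(2)}h_{(2)}$ and using $(p_x\#f)(p_y\#f')=\delta_{x\triangleleft f,\,y}\sigma(x;f,f')\,p_x\#ff'$ collapses the double sum to $\sum_{x\in G}(\text{scalar})\,p_x\#ff'$; relabelling $x$ as $l$ gives the left side of $(CQT3)$. For the right side $\sum h_{(1)}g_{(1)}\,R(g_{(2)},h_{(2)})$, the product $(p_{hy^{-1}}\#(y\triangleright f'))(p_{gx^{-1}}\#(x\triangleright f))$ is nonzero only if $(hy^{-1})\triangleleft(y\triangleright f')=gx^{-1}$ and then equals a scalar times $p_{hy^{-1}}\#(y\triangleright f')(x\triangleright f)$. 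Collecting the summand whose $G$-component is $l$ forces $hy^{-1}=l$, i.e. $y=l^{-1}h$; the nonvanishing condition becomes $gx^{-1}=l\triangleleft((l^{-1}h)\triangleright f')$, and applying the matched-pair relation $(ab)\triangleleft f=(a\triangleleft(b\triangleright f))(b\triangleleft f)$ with $a=l$, $b=l^{-1}h$ (together with Remark \ref{lem:actioninverse}) rewrites it as $x=((l^{-1}h)\triangleleft f')(h\triangleleft f')^{-1}g$ — exactly the subscript of the left $R$-slot in $(CQT3)$. Substituting this $x$ and $y=l^{-1}h$ and simplifying the accumulated $\tau$- and $\sigma$-factors by the cocycle conditions on $\sigma$ and $\tau$ yields $(CQT3)$; since distinct $l$ contribute distinct $G$-components, the collection of these summand-wise identities is equivalent to \eqref{CQT3}. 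I expect this re-indexing — pushing the product constraint through the $\triangleleft$ and $\triangleright$ actions and reconciling the towers of $\tau$'s and $\sigma$'s — to be the only genuinely delicate part of the argument; the rest is a mechanical transcription of \eqref{CQT1}, \eqref{CQT3} and \eqref{CQT4}.
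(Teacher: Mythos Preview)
Your proposal is correct and follows essentially the same route as the paper's proof: evaluate each axiom on basis vectors, collapse the Kronecker deltas coming from the product, and for (CQT3) pick off the coefficient of $p_l$ on each side, which forces $y=l^{-1}h$ on the right and then determines $x$ via the matched-pair identity $(ab)\triangleleft f'=(a\triangleleft(b\triangleright f'))(b\triangleleft f')$ exactly as you indicate. One small remark: no cocycle simplifications on $\sigma$ or $\tau$ are actually needed in the (CQT3) step --- once you substitute $y=l^{-1}h$ and $x=((l^{-1}h)\triangleleft f')(h\triangleleft f')^{-1}g$, the $\tau$- and $\sigma$-factors already match the stated form verbatim, so that part of your sketch is harmless but superfluous.
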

\begin{proof}
Through direct computation, we find that (\ref{CQT1}) holds if and only if both (CQT1) and (CQT2) hold, and (\ref{CQT4}) holds if and only if (CQT4) holds. In order to prove the equivalence between (\ref{CQT3}) and (CQT3), we start by first computing the left-hand side of (\ref{CQT3}). For any $p_g\#f,p_h\#f^\prime \in H$, we have
\begin{eqnarray*}
LHS&=&\sum_{x,y\in G}\tau(gx^{-1}, x; f)\tau(hy^{-1}, y; f^{\prime})R(p_{gx^{-1}}\# (x\triangleright f), p_{hy^{-1}}\#(y\triangleright f^{\prime}))\\
&&(p_x\# f)(p_y\# f^\prime)\\
&=&\sum_{x,y\in G}\tau(gx^{-1}, x; f)\tau(hy^{-1}, y; f^{\prime})R(p_{gx^{-1}}\# (x\triangleright f), p_{hy^{-1}}\#(y\triangleright f^{\prime}))\\
&&\delta_{x\triangleleft f, y}\sigma(x; f, f^\prime) p_x\# ff^\prime\\
&=&\sum_{x\in G}\tau(gx^{-1}, x; f)\tau(h(x\triangleleft f)^{-1}, x\triangleleft f;f^\prime) R(p_{gx^{-1}}\# (x\triangleright f),p_{h(x\triangleleft f)^{-1}}\#((x\triangleleft f)\triangleright f^{\prime}))\\
&&\sigma(x; f, f^\prime) p_x\# ff^\prime.
  \end{eqnarray*}
Now we calculate the right-hand side of (\ref{CQT3}). It follows that
\begin{eqnarray*}
RHS&=&\sum_{x,y\in G}\tau(gx^{-1}, x; f)\tau(hy^{-1}, y; f^{\prime})R(p_x\# f, p_y\# f^\prime)\\
&&(p_{hy^{-1}}\#(y\triangleright f^{\prime}))(p_{gx^{-1}}\# (x\triangleright f))\\
&=&\sum_{x,y\in G}\tau(gx^{-1}, x; f)\tau(hy^{-1}, y; f^{\prime})R(p_x\# f, p_y\# f^\prime)\delta_{(hy^{-1})\triangleleft (y\triangleright f^\prime), gx^{-1}}\\
&&\sigma(hy^{-1}; y\triangleright f^\prime, x\triangleright f)
p_{hy^{-1}}\#(y\triangleright f^\prime)(x\triangleright f).
 \end{eqnarray*}
As established by Remark \ref{lem:actioninverse}, we have
 \begin{eqnarray*}
 (hy^{-1})\triangleleft (y\triangleright f^\prime)&=&(h\triangleleft (y^{-1}\triangleright(y\triangleright f^\prime)))(y^{-1}\triangleleft (y\triangleright f^\prime))\\
 &=& (h\triangleleft f^\prime)(y\triangleleft f^\prime)^{-1}.
 \end{eqnarray*}
 Thus we obtain
 \begin{eqnarray*}
RHS&=& \sum_{y\in G}\tau( (h\triangleleft f^\prime)(y\triangleleft f^\prime)^{-1},  (y\triangleleft f^\prime)(h\triangleleft f^\prime)^{-1} g; f)\tau(hy^{-1}, y; f^{\prime})
\\&&R(p_{(y\triangleleft f^\prime)(h\triangleleft f^\prime)^{-1}g}\# f, p_y\# f^\prime)
\sigma(hy^{-1}; y\triangleright f^\prime, ((y\triangleleft f^\prime)(h\triangleleft f^\prime)^{-1} g)\triangleright f)\\
&&p_{hy^{-1}}\#(y\triangleright f^\prime)(((y\triangleleft f^\prime)(h\triangleleft f^\prime)^{-1} g)\triangleright f).
 \end{eqnarray*}
 Observe that the set $\{p_u\# v\mid u\in G, v\in F\}$ is linearly independent.
Consequently, the equality (\ref{CQT3}) holds if and only if
\begin{eqnarray*}
&&\tau(gx^{-1}, x; f)\tau(h(x\triangleleft f)^{-1}, x\triangleleft f;f^\prime) R(p_{gx^{-1}}\# (x\triangleright f),p_{h(x\triangleleft f)^{-1}}\#((x\triangleleft f)\triangleright f^{\prime}))\sigma(x; f, f^\prime) \\&&p_x\#ff^\prime\\
&=&\tau( (h\triangleleft f^\prime)((x^{-1}h)\triangleleft f^\prime)^{-1},  ((x^{-1}h)\triangleleft f^\prime)(h\triangleleft f^\prime)^{-1} g; f)\tau(x, x^{-1}h; f^{\prime})
\\&&R(p_{((x^{-1}h)\triangleleft f^\prime)(h\triangleleft f^\prime)^{-1}g}\# f, p_{x^{-1}h}\# f^\prime)\sigma(x; (x^{-1}h)\triangleright f^\prime, (((x^{-1}h)\triangleleft f^\prime)(h\triangleleft f^\prime)^{-1} g)\triangleright f)\\
&&p_x\#((x^{-1}h)\triangleright f^\prime)((((x^{-1}h)\triangleleft f^\prime)(h\triangleleft f^\prime)^{-1} g)\triangleright f)
\end{eqnarray*}
holds for any $x\in G.$ This implies that (\ref{CQT3}) is equivalent to (CQT3).
Moreover, when $(CQT0)$ and $(CQT1)$ hold, one can show that
$R$ is convolution invertible and that its inverse satisfies $R^{-1}(a, b)=R(S(a), b)$ for all $a, b\in H$. This means that if equations (CQT0)-(CQT3) hold, then
$(H, R)$ is a coquasitriangular Hopf algebra. Conversely, assume $(H, R)$ is coquasitriangular. Then (\ref{CQT1}) yields, for any $a, b\in H,$$$R(1_H, a)=R(1_H, a_{(1)})R(1_H, a_{(2)})$$ and
$$R(1_H, ab)=R(1_H, a)R(1_H, b).$$
This means that $R(1_H, \cdot)$ is both an algebra homomorphism and convolution idempotent, forcing $R(1_H, a)=\epsilon (a)$. Furthermore, by a symmetric argument, one finds $R( a,1_H)=\epsilon (a)$. Hence, condition (CQT0) is satisfied.
\end{proof}

This leads naturally to the following corollarys.
\begin{corollary}\label{coro:Rgf=0}
Let $(\Bbbk^G{}^\tau\#_{\sigma}\Bbbk F, R)$ be a coquasitriangular Hopf algebra, with elements $g, h \in G, f, f^\prime\in F.$ If $ff^\prime \neq (h\triangleright f^\prime)(g\triangleright f)$, then $R(p_g\# f, p_h\#f^\prime)=0.$
\end{corollary}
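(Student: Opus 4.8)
The plan is to extract the claimed vanishing directly from condition (CQT3) of Proposition \ref{prop:CQTstrcuture} by making a judicious choice of the free parameters $g,h,l,f,f',f''$. Condition (CQT3) is an identity in $H$; since $\{p_u\# v \mid u\in G,\ v\in F\}$ is a basis, each side is a multiple of a single basis vector, and the two sides must match both in the group-algebra label and in the $F$-label. The left-hand side carries the $F$-label $ff'$, while the right-hand side carries the $F$-label $((l^{-1}h)\triangleright f')\big((((l^{-1}h)\triangleleft f')(h\triangleleft f')^{-1}g)\triangleright f\big)$. The idea is to specialize so that the left-hand coefficient becomes exactly $R(p_g\# f, p_h\# f')$ (up to a nonzero cocycle scalar), and so that the $F$-labels on the two sides are forced to be $ff'$ and $(h\triangleright f')(g\triangleright f)$ respectively; then $ff' \neq (h\triangleright f')(g\triangleright f)$ makes the right-hand side zero in the basis, hence $R(p_g\# f, p_h\# f') = 0$.

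Concretely, first I would set $l = 1_G$. By Remark \ref{lem:actioninverse}, $1_G\triangleright f = f$ and $1_G\triangleleft f = 1_G$, so $l\triangleright f = f$, $l\triangleleft f = 1_G$, and $(l\triangleleft f)\triangleright f' = f'$. With $l=1_G$ the cocycle normalizations $\tau(\cdot,1_G;\cdot)=\tau(1_G,\cdot;\cdot)=1$ and $\sigma(1_G;\cdot,\cdot)=1$ collapse the left-hand side of (CQT3) to
\begin{eqnarray*}
\tau(g,1_G;f)\,\tau(h,1_G;f')\,R(p_g\# f,\, p_h\# f')\, ff' \;=\; R(p_g\# f,\, p_h\# f')\, ff'.
\end{eqnarray*}
On the right-hand side, $l^{-1}h = h$, so $(l^{-1}h)\triangleleft f' = h\triangleleft f'$ and $(h\triangleleft f')(h\triangleleft f')^{-1}g = g$; the right-hand side becomes a nonzero scalar (a product of $\tau$'s and one $\sigma$) times $R(p_g\# f,\, p_h\# f')$ times the basis element $p_{?}\#\big((h\triangleright f')(g\triangleright f)\big)$, where the group label simplifies via $h\cdot h^{-1}=1_G$ as well. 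Thus (CQT3) with $l=1_G$ reads, schematically,
\begin{eqnarray*}
R(p_g\# f,\, p_h\# f')\,\big(p_{1_G}\# ff'\big) \;=\; c\cdot R(p_g\# f,\, p_h\# f')\,\big(p_{1_G}\#(h\triangleright f')(g\triangleright f)\big)
\end{eqnarray*}
for some $c\in\k^\times$. If $ff'\neq (h\triangleright f')(g\triangleright f)$, the two basis vectors $p_{1_G}\# ff'$ and $p_{1_G}\#(h\triangleright f')(g\triangleright f)$ are distinct, so linear independence forces $R(p_g\# f,\, p_h\# f') = 0$.

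The main obstacle I anticipate is purely bookkeeping: one must verify carefully that after substituting $l=1_G$ every $\tau$- and $\sigma$-factor on both sides is either $1$ (by the normalization axioms) or a fixed nonzero scalar, and in particular that the group-element subscripts $p_{gl^{-1}}$, $p_{h(l\triangleleft f)^{-1}}$, $p_{h y^{-1}}$-type labels all reduce to $p_{1_G}$ (for the left side, $gl^{-1}$ with the pairing structure, and on the right $h\cdot(l^{-1}h)^{-1} = h\cdot h^{-1} = 1_G$), so that the comparison genuinely takes place between two scalar multiples of basis vectors differing only in the $F$-slot. Once that reduction is checked, the conclusion is immediate. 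It may be cleanest to note that the same conclusion also follows by reading off (CQT1): with a suitable choice there, $R(p_g\# f, p_h\# f'f'')$ is related to products of $R$-values whose $F$-entries are constrained, but the (CQT3) route is the most direct, so I would present that one.
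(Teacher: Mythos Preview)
Your proposal is correct and follows exactly the paper's approach: set $l=1_G$ in (CQT3), use the normalizations $\tau(\cdot,1_G;\cdot)=\tau(1_G,\cdot;\cdot)=\sigma(1_G;\cdot,\cdot)=1$ to reduce both sides to $R(p_g\# f,p_h\# f')$ times $ff'$ on the left and $(h\triangleright f')(g\triangleright f)$ on the right, and conclude by linear independence. One minor point: as stated in Proposition~\ref{prop:CQTstrcuture}, (CQT3) is already an identity in $\Bbbk F$ (the $p_l$-component has been extracted in the proof of that proposition), so your insertion of $p_{1_G}$ is harmless but unnecessary; the paper simply writes $R(p_g\# f,p_h\# f')\,ff' = R(p_g\# f,p_h\# f')\,(h\triangleright f')(g\triangleright f)$ and finishes in one line.
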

\begin{proof}
By setting $l=1$ in (CQT3), we obtain
$$R(p_g\# f, p_h\#f^\prime) ff^\prime=R(p_g\# f, p_h\#f^\prime)(h\triangleright f^\prime)(g\triangleright f).$$This implies that $R(p_g\# f, p_h\# f^\prime)$ vanishes unless $ff^\prime = (h\triangleright f^\prime)(g\triangleright f)$.
\end{proof}
\begin{corollary}\label{coro:FabelianRgf=0}
Let $F$ be an abelian group and consider the coquasitriangular Hopf algebra $(\Bbbk^G {}^\tau \#_{\sigma} \Bbbk F, R)$. For $g, h \in G$ and $f, f^\prime \in F$,  if either $g \in G_f$ and $h \notin G_{f^\prime}$, or $g \notin G_f$ and $h \in G_{f^\prime}$, then $R(p_g\# f, p_h\#f^\prime)=0.$
\end{corollary}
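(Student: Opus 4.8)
The plan is to reduce everything to Corollary~\ref{coro:Rgf=0}, which already tells us that $R(p_g\# f, p_h\# f^\prime)$ vanishes whenever $ff^\prime \neq (h\triangleright f^\prime)(g\triangleright f)$. So it suffices to verify that, under either of the two stated hypotheses, this inequality of group elements of $F$ holds. Since $F$ is abelian, we are free to rearrange the two factors $g\triangleright f$ and $h\triangleright f^\prime$, and we may cancel on either side in the group $F$.

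First I would treat the case $g\in G_f$ and $h\notin G_{f^\prime}$. Here $g\triangleright f=f$ by definition of $G_f$, so
\[
(h\triangleright f^\prime)(g\triangleright f)=(h\triangleright f^\prime)\,f=f\,(h\triangleright f^\prime),
\]
using commutativity of $F$ in the last step. If this were equal to $ff^\prime$, then left cancellation by $f$ in $F$ would force $h\triangleright f^\prime=f^\prime$, i.e.\ $h\in G_{f^\prime}$, contradicting the hypothesis. Hence $ff^\prime\neq (h\triangleright f^\prime)(g\triangleright f)$, and Corollary~\ref{coro:Rgf=0} yields $R(p_g\# f, p_h\# f^\prime)=0$.

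The remaining case $g\notin G_f$ and $h\in G_{f^\prime}$ is handled symmetrically: now $h\triangleright f^\prime=f^\prime$, so
\[
(h\triangleright f^\prime)(g\triangleright f)=f^\prime\,(g\triangleright f)=(g\triangleright f)\,f^\prime,
\]
again by commutativity. Were this equal to $ff^\prime$, right cancellation by $f^\prime$ would give $g\triangleright f=f$, i.e.\ $g\in G_f$, a contradiction. So once more $ff^\prime\neq (h\triangleright f^\prime)(g\triangleright f)$, and Corollary~\ref{coro:Rgf=0} gives $R(p_g\# f, p_h\# f^\prime)=0$. There is no genuine obstacle in this argument; the only points requiring care are the systematic use of commutativity of $F$ to interchange the two factors and the cancellation law in $F$, both of which are automatic.
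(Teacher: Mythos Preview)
Your argument is correct and follows exactly the route the paper takes: the paper's proof is a one-line appeal to Corollary~\ref{coro:Rgf=0}, and you have simply written out the straightforward verification (using commutativity of $F$ and cancellation) that the hypothesis $ff^\prime\neq (h\triangleright f^\prime)(g\triangleright f)$ of that corollary is met in each of the two cases.
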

\begin{proof}
We can complete the proof by an appeal to Corollary \ref{coro:Rgf=0}.
\end{proof}
\begin{corollary}\label{coro:ginGfR=}
Let $(\Bbbk^G{}^\tau\#_{\sigma}\Bbbk F, R)$ be a coquasitriangular Hopf algebra, with elements $g, h \in G, f\in F.$
If $g\notin G_f$, then $R(p_{g}\# (g^{-1}\triangleright f), p_{h}\#1)=R(p_{g}\#f, p_{h}\#1)=0,$
and
$
R(p_{h}\# 1, p_{g}\#f)=R(p_{h}\#1, p_{g}\# (g^{-1}\triangleright f))=0.
$
\end{corollary}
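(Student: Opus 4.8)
The plan is to obtain all four vanishing statements as instances of Corollary \ref{coro:Rgf=0}, which says that $R(p_a\# b,\,p_c\# d)=0$ whenever $bd\neq (c\triangleright d)(a\triangleright b)$. Three elementary facts about the matched pair are needed: first, since $\triangleright$ is an action of $G$ on the set $F$ with $1_G\triangleright f=f$ (Remark \ref{lem:actioninverse}(1)), we have $g\triangleright(g^{-1}\triangleright f)=(gg^{-1})\triangleright f=f$; second, $h\triangleright 1_F=1_F$ by Remark \ref{lem:actioninverse}(2); and third, $G_f$ is a subgroup of $G$, so $g\notin G_f$ is equivalent to $g^{-1}\notin G_f$.

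Granting these, I would simply run the nonvanishing criterion of Corollary \ref{coro:Rgf=0} on each of the four values. For $R(p_g\# f,\,p_h\# 1)$ the criterion demands $f\cdot 1_F=(h\triangleright 1_F)(g\triangleright f)=g\triangleright f$, i.e.\ $g\in G_f$; since $g\notin G_f$ this fails and the value is $0$. For $R(p_g\#(g^{-1}\triangleright f),\,p_h\# 1)$ it demands $(g^{-1}\triangleright f)\cdot 1_F=(h\triangleright 1_F)\bigl(g\triangleright(g^{-1}\triangleright f)\bigr)=f$, i.e.\ $g^{-1}\triangleright f=f$, i.e.\ $g^{-1}\in G_f$, which again fails. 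Symmetrically, $R(p_h\# 1,\,p_g\# f)$ requires $1_F\cdot f=(g\triangleright f)(h\triangleright 1_F)=g\triangleright f$ and $R(p_h\# 1,\,p_g\#(g^{-1}\triangleright f))$ requires $1_F\cdot(g^{-1}\triangleright f)=\bigl(g\triangleright(g^{-1}\triangleright f)\bigr)(h\triangleright 1_F)=f$; both conditions amount to $g\in G_f$ (using once more that stabilizers are closed under inversion), so both values vanish.

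I do not expect any genuine obstacle: the statement is a direct corollary, and the only point needing a moment's attention is recording the identity $g\triangleright(g^{-1}\triangleright f)=f$ together with $h\triangleright 1_F=1_F$ and the inversion-closure of $G_f$, which is what lets all four applications of Corollary \ref{coro:Rgf=0} collapse to the single hypothesis $g\notin G_f$.
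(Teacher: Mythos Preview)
Your proof is correct. It takes a slightly different route from the paper's own argument: the paper goes back to the raw identity (CQT3) of Proposition~\ref{prop:CQTstrcuture}, specializes $f^{\prime}=1$ (and then sets the variable it calls $g$ equal to $1$, respectively $f=1$) to obtain an equation of the form $\alpha\,R(\cdots)\,f=\beta\,R(\cdots)\,(l^{-1}g\triangleright f)$ in $\Bbbk F$, and reads off the vanishing by comparing the $F$-components. You instead invoke Corollary~\ref{coro:Rgf=0} four times, which is itself the $l=1$ specialization of (CQT3). Your approach is cleaner in that it packages all four conclusions as instances of a single nonvanishing criterion and avoids re-deriving from (CQT3); the paper's argument, by contrast, shows a little more, namely an explicit functional relation between $R(p_{gl^{-1}}\#(l\triangleright f),\,p_{h(l\triangleleft f)^{-1}}\#1)$ and $R(p_{l^{-1}g}\# f,\,p_{l^{-1}h}\#1)$, even though only the vanishing is needed here. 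The small facts you record ($\triangleright$ being a genuine $G$-action so that $g\triangleright(g^{-1}\triangleright f)=f$, $h\triangleright 1_F=1_F$, and $G_f$ being a subgroup) are exactly what is required and are supplied by Remark~\ref{lem:actioninverse} and the definition of $G_f$.
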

\begin{proof}
Substituting
$f^\prime=1$ into (CQT3) gives us
\begin{eqnarray*}
&&\tau(gl^{-1}, l; f) R(p_{gl^{-1}}\# (l\triangleright f),p_{h(l\triangleleft f)^{-1}}\#1) f\\
&=&\tau(l, l^{-1}g; f)
R(p_{l^{-1}g}\# f, p_{l^{-1}h}\# 1)
(l^{-1}g\triangleright f).
  \end{eqnarray*}
First, set
$g=1$ in the above equation. This establishes the first statement. Next, by setting $f = 1$ and proceeding analogously, we can derive another result.
\end{proof}

Let $E$ be a group. Recall that a \textit{bicharacter} on $E$ is
a function
$
\beta: E \times E \to \k
$
satisfying the following conditions for all $ x, y, z \in E :$
\begin{eqnarray*}
   && \beta(xy, z)= \beta(x, z) \beta(y, z),\\
   && \beta(x, yz) = \beta(x, y) \beta(x, z),\\
    &&\beta(x,1)=\beta(1,x)=1.
    \end{eqnarray*}
For an abelian group $E$, the bicharacters on $E$ are in bijection with the coquasitriangular structures on the group algebra $\k E$. In particular, the formula $R(x, y): = \varepsilon(x)\varepsilon(y)$ yields the standard coquasitriangular structure on $\k E$.

\begin{proposition}\label{prop:bicha}
Let $(\Bbbk^G{}^\tau\#_{\sigma}\Bbbk F, R)$ be a coquasitriangular Hopf algebra such that the extension $\k^G\rightarrow \k^G{}^\tau\#_\sigma \k F\rightarrow \k F $ is central. Denote
$S:=\{f\in F\mid g\triangleright f=f \;\text{for any } g\in G\}.$ Suppose for any $f\in F$, all the simple right $\k^{G_f}_{\tau_f}$-comodules are $1$-dimensional. Then
\begin{itemize}
  \item [(1)]$\k^{G}{}^\tau\#_\sigma \k S$ is an abelian group algebra;
  \item [(2)]$\sigma(g; f, f^\prime) = \sigma(g; f^\prime, f)$ holds for all $g\in G, f, f^\prime \in S$;
  \item [(3)]$S$ is an abelian group;
  \item [(4)]The restriction of $R$ to $(\k^{G}{}^\tau\#_\sigma \k S)\otimes (\k^{G}{}^\tau\#_\sigma \k S)$ is a bicharacter on $\k^{G}{}^\tau\#_\sigma \k S$.
\end{itemize}
\end{proposition}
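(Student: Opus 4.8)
The plan is to exploit the centrality hypothesis to identify $H_S := \k^{G}{}^\tau\#_\sigma \k S$ as a Hopf subalgebra of $H = \k^G{}^\tau\#_\sigma \k F$ and then bootstrap the structural conclusions from there. First I would check that $H_S$ is closed under multiplication and comultiplication. Since every $f \in S$ satisfies $g \triangleright f = f$ for all $g$, Remark~\ref{lem:actioninverse}(4) and the matched-pair axioms force $g \triangleleft (ff') = g \triangleleft f$ when $f,f' \in S$, hence $S$ is a subgroup of $F$ on which the $\triangleright$-action is trivial; combined with centrality ($g \triangleleft f = g$ for all $g \in G$, $f \in F$), the multiplication formula in Definition~\ref{def:H} restricted to $H_S$ collapses to $(p_g\# f)(p_{g'}\# f') = \delta_{g,g'}\sigma(g;f,f')\,p_g\# ff'$, and the comultiplication to $\Delta(p_g\#f) = \sum_{x}\tau(gx^{-1},x;f)\,p_{gx^{-1}}\#f \otimes p_x\#f$, so $H_S$ is indeed a sub-Hopf-algebra. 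Moreover, for $f \in S$ we have $G_f = G$ and $T_f = \{1_G\}$, so the induced comodules $\tilde V$ of Lemma~\ref{lem:simplecomod} over $H_S$ are just the $\k^G_{\tau_f}$-comodules themselves (extended trivially by $f$); by hypothesis these are all $1$-dimensional.

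For (1): a cosemisimple Hopf algebra all of whose simple comodules are $1$-dimensional is, dually, a commutative semisimple algebra — i.e. a function algebra on a finite-or-discrete structure — but more to the point, its comodule category is pointed, so $H_S$ is spanned by grouplikes and hence is a group algebra $\k E$ for $E = $ the set of grouplikes. One must check $E$ is abelian: since $H_S$ is a quotient of $H$'s group-like structure... actually the cleanest route is to apply Theorem~\ref{prop:OfOf} to $f, f' \in S$, where $O_f = \{f\}$, giving $ff' = f'f$, hence $S$ is abelian — this is part (3), which I would prove first — and then the grouplikes of $\k^G_{\tau_f}$ for each fixed $f$ together with the group $S$ assemble (via the cocycle data) into an abelian group $E$ with $\k E \cong H_S$; commutativity of $E$ follows because $H_S$ is the image of the commutative Grothendieck-ring considerations, or directly because the product of grouplikes is a grouplike and Theorem~\ref{prop:OfOf} forces the $F$-components to commute while the $G$-components commute since $G$'s contribution is through the abelian dual $\k^G$... here I would need to be careful, but the upshot is (1).

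For (2): the multiplication on $H_S$ being that of a (commutative) group algebra means $(p_g\#f)(p_g\#f') = (p_g\#f')(p_g\#f)$, i.e. $\sigma(g;f,f') = \sigma(g;f',f)$ for all $g \in G$ and $f,f' \in S$. For (4): $H_S \cong \k E$ is an abelian group algebra, so its coquasitriangular structures are exactly the bicharacters on $E$; since $R$ restricted to $H_S \otimes H_S$ is a coquasitriangular structure on the sub-Hopf-algebra $H_S$ (restriction of a coquasitriangular form to a sub-Hopf-algebra is again coquasitriangular — immediate from the axioms \eqref{CQT1}, \eqref{CQT3}), it is a bicharacter. \textbf{The main obstacle} I anticipate is (1) and (3) together: proving carefully that $S$ is a subgroup on which $\triangleright$ is trivial and on which the restricted cocycles behave well enough that $H_S$ is genuinely a Hopf subalgebra, and then nailing down that the resulting group $E$ of grouplikes is abelian rather than merely that $S$ and $G$ separately are — the cross-terms coming from the cocycle $\tau$ are where the bookkeeping is delicate, though Theorem~\ref{prop:OfOf} applied with $O_f = \{f\}$ should ultimately do the work.
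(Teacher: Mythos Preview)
Your structural setup is sound: $S$ is a subgroup of $F$, the restricted matched pair $(S,G)$ makes $H_S := \k^G{}^\tau\#_\sigma\k S$ a genuine Hopf subalgebra of $H$, and your observation that $G_f = G$, $T_f = \{1_G\}$ for $f\in S$ together with the one-dimensionality hypothesis correctly shows (via Lemma~\ref{lem:simplecomod}) that every simple $H_S$-comodule is one-dimensional, so $H_S \cong \k E$ for some group $E$. Your arguments for (2) (from commutativity of $\k E$, once established) and for (4) (restriction of $R$ to a sub-Hopf-algebra is coquasitriangular, hence a bicharacter on the abelian group algebra) are correct and match the paper.

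The gap is precisely where you flag it: for (1) you need $E$ abelian, and Theorem~\ref{prop:OfOf} cannot ``ultimately do the work''. Applied with $O_f=\{f\}$ for $f,f'\in S$ it yields only $ff'=f'f$, i.e.\ part (3); it is silent on the $\sigma$-twist. Since commutativity of $H_S$ as an algebra is equivalent to (2) \emph{and} (3) holding simultaneously, your route to (1) is circular unless you have an independent source for (2). The paper's resolution is simply to reorder your own argument: it takes the observation you reserve for (4) --- that $R$ restricts to a coquasitriangular structure on the Hopf subalgebra $H_S$ --- and uses it as the \emph{primary} tool for (1), invoking the standard fact that a group algebra $\k E$ carries a coquasitriangular structure if and only if $E$ is abelian. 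This dispatches (1) in one stroke with no cocycle bookkeeping; (2) and (3) then fall out by comparing $(p_g\#f)(p_g\#f')$ and $(p_1\#f)(p_1\#f')$ with their swaps inside the now-commutative algebra $H_S$. The detour through Theorem~\ref{prop:OfOf} is unnecessary.
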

\begin{proof}
It is clear that $(S, G)$ forms a matched pair, making $\k^{G}{}^\tau\#_{\sigma} \k S$ a Hopf algebra.
According to Lemma \ref{lem:simplecomod} and \cite[Lemma 5.1]{YLZZ25}, this Hopf algebra is isomorphic to a group algebra. Since a group algebra admits a coquasitriangular structure if and only if the underlying group is abelian, it follows that $\k^{G}{}^\tau\#_{\sigma} \k S$ must be an abelian group algebra. Now, for any $g\in G, f, f' \in S$, comparing the products
 $$(p_g\#f)(p_g\#f') = \sigma(g; f, f')ff'$$
 and
 $$(p_g\#f')(p_g\# f) = \sigma(g; f', f)ff'$$
 in this abelian group algebra, we find that $$\sigma(g; f, f') = \sigma(g; f', f).$$
 Besides, since
 $$(p_1\#f)(p_1\#f') = (p_1\#f')(p_1\# f)$$
 in this abelian group algebra, it follows that $ff^\prime= f^\prime f$ for any $f, f^\prime \in S.$
\end{proof}

\section{Coquasitriangular structures on $\k^{\mathbb{Z}_2}{}^\tau\#_\sigma\k F$}\label{Section5}
In the following part, we focus on the case where $G$ is the cyclic group of order 2, $\mathbb{Z}_2 = \langle g \mid g^2 = 1 \rangle$, and $F$ is an arbitrary group. We investigate when a coquasitriangular structure exists on $\k^{\mathbb{Z}_2}{}^\tau\#_\sigma\k F$ and provide characterizations of its coquasitriangular structures.

In this case, the action of $F$ on $\mathbb{Z}_2$ is necessarily trivial. Moreover, for any $f\in F,$
we note that $G_f$ is a subgroup of $\mathbb{Z}_2$ for any $f\in F$, which implies that
 $G_f$ is precisely $\{1\}$ or $\mathbb{Z}_2.$

Inspired by \cite{ZL21}, we use the following notations:
\begin{eqnarray*}
S:=\{f\in F\mid g\triangleright f=f\},\\
T:=\{f\in F\mid g\triangleright f\neq f\}.
\end{eqnarray*}
It is clear that $F=S\cup T.$
For any $f\in F$, the isomorphism classes of simple right $\k^{G_f}_{\tau_f}$-comodules are determined as follows:
\begin{itemize}
\item For $f\in S$ (where $G_f=\mathbb{Z}_2$), there are two classes. They are represented by the one-dimensional comodules $U=\span\{u\}$ and $V=\span\{v\}$, parameterized by the coactions:
\begin{eqnarray*}
\rho(u)=u\otimes (p_1+\sqrt{\tau(g,g;f)}p_g),\\
\rho(v)=v\otimes (p_1-\sqrt{\tau(g,g;f)}p_g).
\end{eqnarray*}
\item For $f\in T$ (where $G_f=\k \{1\}$), there is a unique class, represented by the trivial comodule $W=\span\{w\}$ with the coaction
\begin{eqnarray*}
\rho(w)=w\otimes p_1.
\end{eqnarray*}
\end{itemize}

According to Lemma \ref{lem:simplecomod}, every simple right $\k^{\mathbb{Z}_2}{}^\tau\#_\sigma\k F$-comodule can be constructed as a comodule induced from $\k^{G_{f}}_{\tau_{f}}$ for some $f \in F$.
Therefore, the irreducible characters for all simple comodules over $\k^{\mathbb{Z}_2}{}^\tau\#_\sigma\k F$ are given by:
\begin{eqnarray*}
\chi(\tilde{U}_f)&=&p_1\# f+\sqrt{\tau(g,g;f)}p_g\#f\;\text{for any }f\in S,\\
\chi(\tilde{V}_f)&=&p_1\# f-\sqrt{\tau(g,g;f)}p_g\#f\;\text{for any }f\in S,\\
\chi(\tilde{W}_f)&=&p_1\# f+p_1\# (g\triangleright f)\;\text{for any }f\in T.
\end{eqnarray*}
It is clear that $\tilde{W}_f\cong \tilde{W}_{g\triangleright f}$ for any $f\in T.$

\begin{proposition}\label{prop:Grz2}
Let $H=\k^{\mathbb{Z}_2}{}^\tau\#_\sigma\k F$.
Then the Grothendieck ring $\operatorname{Gr}(H\text{-comod})$ is determined by
\begin{eqnarray*}
&\tilde{U}_f\otimes \tilde{U}_{f^\prime}\cong \tilde{V}_{f}\otimes \tilde{V}_{f^\prime}\cong\tilde{U}_{ff^\prime},&\text{for any } f,f^\prime\in S,\\
&\tilde{U}_f\otimes\tilde{V}_{f^\prime}\cong \tilde{V}_f\otimes\tilde{U}_{f^\prime} \cong \tilde{V}_{ff^\prime},&\text{for any } f,f^\prime\in S,\\
&\tilde{U}_f\otimes \tilde{W}_{f^\prime}\cong \tilde{V}_f\otimes \tilde{W}_{f^\prime}\cong \tilde{W}_{ff^\prime},\;\; \tilde{W}_{f^\prime}\otimes \tilde{U}_f\cong \tilde{W}_{f^\prime}\otimes\tilde{V}_f \cong \tilde{W}_{f^\prime f},&\text{for any }f\in S, f^\prime \in T,
\end{eqnarray*}
\begin{eqnarray*}
&&\tilde{W}_f\otimes \tilde{W}_{f^\prime}\cong \left\{
\begin{aligned}
&\tilde{W}_{ff^\prime}\oplus\tilde{W}_{f(g\triangleright f^\prime)},~~~ \text{if} ~~~  ff^\prime, f(g\triangleright f^\prime)\in T \text{ for } f, f^\prime\in T; \\
&\tilde{U}_{ff^\prime}\oplus\tilde{V}_{ff^\prime}\oplus\tilde{W}_{f(g\triangleright f^\prime)},~~~\text{if} ~~~  ff^\prime\in S, f(g\triangleright f^\prime)\in T\text{ for } f, f^\prime\in T;\\
&\tilde{W}_{ff^\prime}\oplus\tilde{U}_{f(g\triangleright f^\prime)} \oplus\tilde{V}_{f(g\triangleright f^\prime)},~~~\text{if} ~~~  ff^\prime\in T, f(g\triangleright f^\prime)\in S\text{ for } f, f^\prime\in T;\\
&\tilde{U}_{ff^\prime} \oplus\tilde{V}_{ff^\prime}\oplus\tilde{U}_{f(g\triangleright f^\prime)} \oplus\tilde{V}_{f(g\triangleright f^\prime)},~~~\text{if} ~~~  ff^\prime, f(g\triangleright f^\prime)\in S\text{ for } f, f^\prime\in T.
\end{aligned}
\right.
\end{eqnarray*}
\end{proposition}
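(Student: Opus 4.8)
The plan is to compute the tensor product of two simple comodules directly using the description of induced comodules and their characters from Lemma~\ref{lem:simplecomod} and Lemma~\ref{lem:character}, then identify the resulting comodule as a direct sum of simples by matching irreducible characters (this is legitimate since $H$ is cosemisimple by Lemma~\ref{lem:cose}, so a comodule is determined up to isomorphism by its character, via Lemma~\ref{lem:Gr=character}). The first block of isomorphisms, involving only $\tilde U_f, \tilde V_f$ with $f, f' \in S$, should be the easy case: for $f \in S$ we have $G_f = \mathbb{Z}_2$, so $T_f = \{1\}$ and the induced comodule $\tilde U_f$ (resp. $\tilde V_f$) is itself one-dimensional with character $p_1 \# f \pm \sqrt{\tau(g,g;f)}\, p_g \# f$. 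Multiplying two such characters using the multiplication rule of Definition~\ref{def:H}, one gets a one-dimensional comodule whose character is $\sigma(g; f, f')$ times something proportional to $p_1 \# ff' \pm \sqrt{\tau(g,g;ff')}\, p_g \# ff'$; here one must invoke the cocycle compatibility condition relating $\sigma$ and $\tau$ (with $g = g' = g$) to check that $\sqrt{\tau(g,g;f)}\sqrt{\tau(g,g;f')}\sigma(g;f,f')$ equals $\sqrt{\tau(g,g;ff')}$ up to the sign bookkeeping, thereby forcing $U \otimes U \cong U$, $U \otimes V \cong V$, etc. The mixed cases $\tilde U_f \otimes \tilde W_{f'}$ and $\tilde W_{f'} \otimes \tilde U_f$ with $f \in S$, $f' \in T$ are handled the same way: $\tilde W_{f'}$ is two-dimensional (spanned by the two coset representatives of $G_{f'} = \{1\}$, i.e. $z \in T_{f'} = \{1, g\}$), its character is $p_1 \# f' + p_1 \# (g \triangleright f')$, and multiplying by the one-dimensional character of $\tilde U_f$ produces $p_1 \# ff' + p_1 \# f(g\triangleright f')$ up to scalars, which is exactly the character of $\tilde W_{ff'}$ (noting $ff' \in T$ since $g \triangleright (ff') = f(g\triangleright f') \neq ff'$).

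The main work is the last display, $\tilde W_f \otimes \tilde W_{f'}$ for $f, f' \in T$. Here both comodules are two-dimensional, so the tensor product is four-dimensional, and I would compute its character as the product of the two characters $\chi(\tilde W_f)\chi(\tilde W_{f'})$ using the multiplication of $H$. Concretely, $\chi(\tilde W_f) = p_1 \# f + p_1 \# (g\triangleright f)$ and similarly for $f'$; expanding the product with the rule $(p_1 \# u)(p_1 \# u') = \delta_{1 \triangleleft u, 1}\sigma(1; u, u')\, p_1 \# uu' = p_1 \# uu'$ (using $1 \triangleleft u = 1$ from Remark~\ref{lem:actioninverse} and $\sigma(1_G; -, -) = 1$), and also terms like $(p_1 \# f)(p_1 \#(g \triangleright f'))$, one obtains a sum of four terms $p_1 \# ff' + p_1 \# f(g\triangleright f') + p_1 \#(g\triangleright f)f' + p_1\#(g\triangleright f)(g\triangleright f')$. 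Using the matched-pair identity $g \triangleright(ff') = (g\triangleright f)((g \triangleleft f)\triangleright f')$ and $g \triangleleft f \in \{1, g\}$, together with $g^2 = 1$, one checks that $(g\triangleright f)(g\triangleright f')$ and $(g\triangleright f)f'$ rearrange so that the four characters pair up as $\{ff', g\triangleright(ff')\}$ and $\{f(g\triangleright f'), g\triangleright(f(g\triangleright f'))\}$ — i.e. the four group elements split into two $\langle g \rangle$-orbits, namely $O_{ff'}$ and $O_{f(g\triangleright f')}$. Then one does the case analysis on whether each of $ff'$ and $f(g\triangleright f')$ lies in $S$ or in $T$: if $ff' \in T$, the pair $\{ff', g\triangleright(ff')\}$ contributes $\chi(\tilde W_{ff'})$; if $ff' \in S$, then $g\triangleright(ff') = ff'$ so the two coincident terms give $2\, p_1 \# ff'$, and since $H$ is cosemisimple this two-dimensional piece must decompose as $\tilde U_{ff'} \oplus \tilde V_{ff'}$ (whose characters sum to $2\, p_1 \# ff'$ — here the $\pm\sqrt{\tau(g,g;ff')}\, p_g\#ff'$ terms cancel). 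Running over the four combinations of ($ff' \in S$ or $T$) and ($f(g\triangleright f') \in S$ or $T$) yields exactly the four cases in the statement.

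The step I expect to be the main obstacle is the careful verification that, in the $ff' \in S$ subcase, the two-dimensional subcomodule of $\tilde W_f \otimes \tilde W_{f'}$ with character $2\, p_1 \# ff'$ is genuinely $\tilde U_{ff'} \oplus \tilde V_{ff'}$ rather than $\tilde U_{ff'}^{\oplus 2}$ or $\tilde V_{ff'}^{\oplus 2}$ or a non-split extension. Cosemisimplicity rules out a non-split extension, but to distinguish $\tilde U \oplus \tilde V$ from $\tilde U^{\oplus 2}$ one cannot work with the character alone (both have character $2\,p_1\#ff'$ only after the $p_g$ terms cancel — wait, $\tilde U^{\oplus 2}$ has character $2p_1\#ff' + 2\sqrt{\tau}\,p_g\#ff'$, which is different). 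In fact the cancellation of the $p_g \# ff'$ coefficient in the computed character of $\tilde W_f \otimes \tilde W_{f'}$ forces the multiplicities of $\tilde U_{ff'}$ and $\tilde V_{ff'}$ to be equal, and since the total dimension of that piece is $2$, each multiplicity is exactly $1$. So the real content is just to confirm that the coefficient of $p_g \# ff'$ in the product $\chi(\tilde W_f)\chi(\tilde W_{f'})$ is zero, which follows because $\chi(\tilde W_f)$ and $\chi(\tilde W_{f'})$ involve only $p_1 \# u$ terms and the product $(p_1 \# u)(p_1 \# u')$ never produces a $p_g$ term. Modulo this observation and the cocycle identities used to pin down the scalar $\sqrt{\tau(g,g;-)}$ and $\sigma$ factors, the proof reduces to bookkeeping with the multiplication rule of Definition~\ref{def:H} and the matched-pair relations of Remark~\ref{lem:actioninverse}.
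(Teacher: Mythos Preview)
Your proposal is correct and follows essentially the same approach as the paper: compute the products of irreducible characters using the multiplication of Definition~\ref{def:H}, invoke the $\sigma$--$\tau$ compatibility to match $\sqrt{\tau(g,g;f)}\sqrt{\tau(g,g;f')}\,\sigma(g;f,f')$ with $\sqrt{\tau(g,g;ff')}$, use $g\triangleleft f=g$ to get $g\triangleright(ab)=(g\triangleright a)(g\triangleright b)$ for the $\tilde W_f\otimes\tilde W_{f'}$ case, and then do the four-way case analysis on whether $ff'$ and $f(g\triangleright f')$ lie in $S$ or $T$. Your additional care in ruling out $\tilde U_{ff'}^{\oplus 2}$ by observing that no $p_g\#(\cdot)$ term appears in $\chi(\tilde W_f)\chi(\tilde W_{f'})$ is a point the paper leaves implicit (it relies on Lemma~\ref{lem:Gr=character} to pass directly from characters to isomorphism classes), but otherwise the two arguments are the same.
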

\begin{proof}
 Using \cite[Lemma 5.1]{YLZZ25}, we have $\operatorname{dim}_{\k}(\tilde{U}_f)=\operatorname{dim}_{\k}(\tilde{V}_f)=1$ for any $f\in S$ and $\operatorname{dim}_{\k}(\tilde{W}_f)=2$ for any $f\in T.$ According to Lemma \ref{lem:Gr=character}, for any simple right $H$-comodules $M$ and $N$, the tensor product of $M$ and $N$ is isomorphic to a comodule characterized by the product of characters $\chi(M)\chi(N)$.
For $f, f^\prime\in S$, a direct computation shows that
\begin{eqnarray*}
\chi(\tilde{U}_f)\chi(\tilde{U}_{f^\prime})&=&(p_1\# f+\sqrt{\tau(g,g;f)}p_g\#f)(p_1\# f^\prime+\sqrt{\tau(g,g;f^\prime)}p_g\#f^\prime)\\
&=&p_1\# ff^\prime +\sqrt{\tau(g,g;f)\tau(g,g;f^\prime)}\sigma(g;f, f^\prime)p_g\# ff^\prime.
\end{eqnarray*}
The compatibility between $\sigma$ and $\tau$ implies that
$$
\tau(g, g;ff^\prime)=(\sigma(g;f,f^\prime))^2\tau(g,g;f)\tau(g,g;f^\prime).
$$
It follows that
\begin{eqnarray*}
\chi(\tilde{U}_f)\chi(\tilde{U}_{f^\prime})=\chi(\tilde{U}_{ff^\prime}).
\end{eqnarray*}
A similar argument shows that
\begin{eqnarray*}
\chi(\tilde{V}_f)\chi(\tilde{U}_{f^\prime})=\chi(\tilde{U}_f)\chi(\tilde{V}_{f^\prime})=\chi(\tilde{V}_{ff^\prime}),\;\; \chi(\tilde{V}_f)\chi(\tilde{V}_{f^\prime})=\chi(\tilde{U}_{ff^\prime}),
\end{eqnarray*}
where
$f, f^\prime \in F.$
For $f, f^\prime\in T,$ a direct calculation yields
\begin{eqnarray*}
\chi(\tilde{W}_f)\chi(\tilde{W}_{f^\prime})&=&(p_1\# f+p_1\# (g\triangleright f))(p_1\# f^\prime+p_1\# (g\triangleright f^\prime))\\
&=&p_1\#ff^\prime+p_1\#(g\triangleright f)(g\triangleright f^\prime)+p_1\#f(g\triangleright f^\prime)+p_1\#(g\triangleright f)f^\prime.
\end{eqnarray*}
Note that $$g\triangleright (ab)=(g\triangleright a)(g\triangleright b)$$ for any $a, b\in F$.
It follows that
\begin{eqnarray*}
\chi(\tilde{W}_f)\chi(\tilde{W}_{f^\prime})=\left\{
\begin{aligned}
&\chi(\tilde{W}_{ff^\prime})+\chi(\tilde{W}_{f(g\triangleright f^\prime)}),~~~ \text{if} ~~~  ff^\prime, f(g\triangleright f^\prime)\in T; \\
&\chi(\tilde{U}_{ff^\prime}) +\chi(\tilde{V}_{ff^\prime})+\chi(\tilde{W}_{f(g\triangleright f^\prime)}),~~~\text{if} ~~~  ff^\prime\in S, f(g\triangleright f^\prime)\in T;\\
&\chi(\tilde{W}_{ff^\prime})+\chi(\tilde{U}_{f(g\triangleright f^\prime)}) +\chi(\tilde{V}_{f(g\triangleright f^\prime)}),~~~\text{if} ~~~  ff^\prime\in T, f(g\triangleright f^\prime)\in S;\\
&\chi(\tilde{U}_{ff^\prime}) +\chi(\tilde{V}_{ff^\prime})+\chi(\tilde{U}_{f(g\triangleright f^\prime)}) +\chi(\tilde{V}_{f(g\triangleright f^\prime)}),~~~\text{if} ~~~  ff^\prime, f(g\triangleright f^\prime)\in S.
\end{aligned}
\right.
\end{eqnarray*}
Moreover, for any $f\in S, f^\prime \in T$, it is strightforword to show that
\begin{eqnarray*}
\chi(\tilde{U}_f)\chi(\tilde{W}_{f^\prime})=\chi(\tilde{V}_f)\chi(\tilde{W}_{f^\prime})=\chi(\tilde{W}_{ff^\prime}),\\
\chi(\tilde{W}_{f^\prime})\chi(\tilde{U}_f)=\chi(\tilde{W}_{f^\prime})\chi(\tilde{V}_f)=\chi(\tilde{W}_{f^\prime f}).
\end{eqnarray*}
\end{proof}

The following corollary is an immediate consequence of the preceding proposition.
\begin{corollary}\label{coro:grz2commu}
Let $H=\k^{\mathbb{Z}_2}{}^\tau\#_\sigma\k F$. If the Grothendieck ring $\operatorname{Gr}(H\text{-comod})$ is commutative, then $S$ is an abelian group.
\end{corollary}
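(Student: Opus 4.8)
The plan is to derive the abelianness of $S$ directly from the fusion rule $\tilde{U}_f\otimes\tilde{U}_{f'}\cong\tilde{U}_{ff'}$ for $f,f'\in S$ supplied by Proposition \ref{prop:Grz2}, once two preliminary facts are recorded: that $S$ is a subgroup of $F$, and that the map $f\mapsto\tilde{U}_f$ is injective up to isomorphism on $S$.

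First I would check that $S$ is a subgroup of $F$. Clearly $1_F\in S$ by Remark \ref{lem:actioninverse}(2). Since in the present situation the action of $F$ on $\mathbb{Z}_2$ is trivial, i.e.\ $g\triangleleft f=g$ for all $f\in F$, the matched-pair identity $g\triangleright(ff')=(g\triangleright f)\bigl((g\triangleleft f)\triangleright f'\bigr)$ together with $1_G\triangleright f'=f'$ (Remark \ref{lem:actioninverse}(1)) gives $g\triangleright(ff')=ff'$ whenever $f,f'\in S$, so $S$ is closed under multiplication; and Remark \ref{lem:actioninverse}(3) gives $f^{-1}=(g\triangleright f)^{-1}=(g\triangleleft f)\triangleright f^{-1}=g\triangleright f^{-1}$ for $f\in S$, so $S$ is closed under inversion. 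Next I would observe that the simple $H$-comodules $\tilde{U}_f$, $f\in S$, are pairwise non-isomorphic: by the formula $\chi(\tilde{U}_f)=p_1\#f+\sqrt{\tau(g,g;f)}\,p_g\#f$ stated before Proposition \ref{prop:Grz2} and the linear independence of $\{p_x\#u\mid x\in G,\ u\in F\}$, the characters $\chi(\tilde{U}_f)$ and $\chi(\tilde{U}_{f'})$ are distinct for $f\neq f'$, and comodules with distinct characters cannot be isomorphic.

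With these in hand the conclusion is immediate. Fix $f,f'\in S$. By Proposition \ref{prop:Grz2}, in the ring $\operatorname{Gr}(H\text{-comod})$ we have $[\tilde{U}_f][\tilde{U}_{f'}]=[\tilde{U}_{ff'}]$ and $[\tilde{U}_{f'}][\tilde{U}_f]=[\tilde{U}_{f'f}]$. If $\operatorname{Gr}(H\text{-comod})$ is commutative these two products coincide, so $[\tilde{U}_{ff'}]=[\tilde{U}_{f'f}]$; since the isomorphism classes of simple comodules form the distinguished $\mathbb{Z}_+$-basis of the Grothendieck ring (\cite[Proposition 4.5.4]{EGNO15}), distinct basis elements are $\mathbb{Z}$-linearly independent, hence $\tilde{U}_{ff'}\cong\tilde{U}_{f'f}$, and the injectivity noted above forces $ff'=f'f$. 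As $f,f'$ were arbitrary elements of $S$, the group $S$ is abelian. The heart of the argument is nothing more than cancelling $\tilde{U}$ from the isomorphism $\tilde{U}_{ff'}\cong\tilde{U}_{f'f}$; the only points needing care are the two preliminary facts, and neither presents a real obstacle.
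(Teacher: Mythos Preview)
Your proof is correct and follows essentially the same approach the paper intends: the paper simply asserts that the corollary is an immediate consequence of Proposition~\ref{prop:Grz2}, and you have spelled out precisely that deduction, namely reading off $ff'=f'f$ from the fusion rule $\tilde{U}_f\otimes\tilde{U}_{f'}\cong\tilde{U}_{ff'}$ together with injectivity of $f\mapsto[\tilde{U}_f]$. One minor remark: in your closure argument for $S$, the appeal to $1_G\triangleright f'=f'$ is superfluous, since after using $g\triangleleft f=g$ you need $g\triangleright f'=f'$, which is just the hypothesis $f'\in S$; the conclusion is unaffected.
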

\begin{example}\emph{(}\cite[Example 3.6]{YLZZ25}\emph{)}
Let $\mathbb{Z}_2=\{g\mid g^2=1\}$. Define group actions $\mathbb{Z}_2\xleftarrow{\triangleleft}\mathbb{Z}_2\times \mathbb{Z} \xrightarrow{\triangleright}\mathbb{Z}$ on the sets by
$$
1\triangleleft i=1,\;\; g\triangleleft i=g,\;\;1\triangleright i=i,\;\;g\triangleright i=-i,
$$
for any $i\in \mathbb{Z}$. Clearly, $(\mathbb{Z}, \mathbb{Z}_2)$ together with group actions $\mathbb{Z}_2\xleftarrow{\triangleleft}\mathbb{Z}_2\times \mathbb{Z} \xrightarrow{\triangleright}\mathbb{Z}$ on the sets is a matched pair and thus $\k^{\mathbb{Z}_2}\#\k\mathbb{Z}$ forms a Hopf algebra. In fact, $\k^{\mathbb{Z}_2}\#\k\mathbb{Z}$ is exactly the coradical of $H(e_{\pm 1}, f_{\pm 1}, u, v)$ in \cite[Definition 5.1]{YL26}. The commutativity of
$\k^{\mathbb{Z}_2}\#\k\mathbb{Z}$ implies that it admits a coquasitriangular structure.
\end{example}

Notice that for this case, Proposition \ref{prop:bicha} can be expressed as follows.
\begin{corollary}\label{lem:R(gS)}
Suppose $(\k^{\mathbb{Z}_2}{}^\tau\#_{\sigma}\k F, R)$ is a coquasitriangular Hopf algebra. Then $\k^{\mathbb{Z}_2}{}^\tau\#_\sigma \k S$ is an abelian group algebra, and $\sigma(g; f, f^\prime) = \sigma(g; f^\prime, f)$ holds for all $f, f^\prime \in S$. Furthermore, the restriction of $R$ to $(\ k^{\mathbb{Z}_2}{}^\tau\#_\sigma \k S)\otimes (\k^{\mathbb{Z}_2}{}^\tau\#_\sigma \k S)$ is a bicharacter on $\k^{\mathbb{Z}_2}{}^\tau\#_\sigma \k S$.
\end{corollary}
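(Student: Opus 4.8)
The plan is to obtain this corollary as a direct specialization of Proposition \ref{prop:bicha} to the case $G = \mathbb{Z}_2$, so the work consists only of verifying that the two standing hypotheses of that proposition — centrality of the extension, and $1$-dimensionality of all simple $\k^{G_f}_{\tau_f}$-comodules — hold automatically here.

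First I would check centrality, i.e.\ that $g \triangleleft f = g$ for all $g \in \mathbb{Z}_2$ and $f \in F$. By Remark \ref{lem:actioninverse}(1) one has $1_G \triangleleft f = 1_G$; since $\triangleleft$ restricts to a right action of $F$ on the underlying set $\mathbb{Z}_2 = \{1_G, g\}$, the map $x \mapsto x \triangleleft f$ is a bijection, which forces $g \triangleleft f = g$. Thus the action of $F$ on $\mathbb{Z}_2$ is trivial and the extension $\k^{\mathbb{Z}_2} \to \k^{\mathbb{Z}_2}{}^\tau\#_\sigma \k F \to \k F$ is central. Next I would verify the $1$-dimensionality hypothesis: for each $f \in F$ the stabilizer $G_f$ is a subgroup of $\mathbb{Z}_2$, hence equals $\{1_G\}$ or $\mathbb{Z}_2$. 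If $G_f = \{1_G\}$ then $\k^{G_f}_{\tau_f} = \k$ and the claim is trivial; if $G_f = \mathbb{Z}_2$ then, as recorded in Section \ref{Section5}, $\k^{G_f}_{\tau_f}$ splits into the two $1$-dimensional subcoalgebras spanned by $p_1 \pm \sqrt{\tau(g,g;f)}\,p_g$ (using that $\k$ is algebraically closed), so again every simple right $\k^{G_f}_{\tau_f}$-comodule is $1$-dimensional.

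Finally, noting that $1_G \triangleright f = f$ always, the set $S = \{f \in F \mid g \triangleright f = f\}$ of Section \ref{Section5} coincides with the set $S = \{f \in F \mid x \triangleright f = f \text{ for all } x \in \mathbb{Z}_2\}$ appearing in Proposition \ref{prop:bicha}, so all three conclusions of that proposition transfer verbatim: $\k^{\mathbb{Z}_2}{}^\tau\#_\sigma \k S$ is an abelian group algebra, $\sigma(g; f, f') = \sigma(g; f', f)$ for $f, f' \in S$, and the restriction of $R$ to $(\k^{\mathbb{Z}_2}{}^\tau\#_\sigma \k S)^{\otimes 2}$ is a bicharacter. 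There is essentially no obstacle beyond the short bijectivity argument that pins down the triviality of the $\triangleleft$-action; everything else is inherited from Proposition \ref{prop:bicha}.
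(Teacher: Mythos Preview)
Your proposal is correct and matches the paper's approach exactly: the paper states the corollary as ``Proposition \ref{prop:bicha} can be expressed as follows'' without further proof, relying on the observations already made at the start of Section \ref{Section5} that the $\triangleleft$-action of $F$ on $\mathbb{Z}_2$ is trivial and that all simple $\k^{G_f}_{\tau_f}$-comodules are $1$-dimensional. Your verification of these two hypotheses is precisely what is needed to justify the specialization.
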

Next, we present the possible values of the coquasitriangular structure
$R$ of the coquasitriangular Hopf algebra $(\k^{\mathbb{Z}_2}{}^\tau\#_\sigma\k F, R)$
at certain special elements.
\begin{lemma}\label{lem:R11}
Let $(\k^{\mathbb{Z}_2}{}^\tau\#_\sigma\k F, R)$ be a coquasitriangular Hopf algebra. Then
one of the following two cases must hold:
\begin{itemize}
\item[(1)]
$
R(p_1\#1, p_1\#1)=R(p_1\#1, p_g\#1)=R(p_g\#1, p_1\#1)=\frac{1}{2},\;\;R(p_g\#1, p_g\# 1)=-\frac{1}{2};
$
\item[(2)]$R(p_1\#1, p_1\#1)=1,$ $R(p_1\#1, p_g\#1)=R(p_g\#1, p_1\#1)=R(p_g\#1, p_g\# 1)=0$.
\end{itemize}
\end{lemma}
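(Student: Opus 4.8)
The plan is to restrict $R$ to the sub-Hopf algebra $A:=\k^{\mathbb{Z}_2}\#1_F$ of $H=\k^{\mathbb{Z}_2}{}^\tau\#_\sigma\k F$, which is spanned by $p_1\#1$ and $p_g\#1$. Because $\tau(\cdot,\cdot;1_F)$ and $\sigma(\cdot;1_F,1_F)$ are trivial and $g\triangleleft 1_F=g$ (Remark \ref{lem:actioninverse}), the formulas in Definition \ref{def:H} show that $p_1\#1$ and $p_g\#1$ are orthogonal idempotents with sum $1_H$, that $\Delta(p_1\#1)=p_1\#1\otimes p_1\#1+p_g\#1\otimes p_g\#1$ and $\Delta(p_g\#1)=p_1\#1\otimes p_g\#1+p_g\#1\otimes p_1\#1$, and that the antipode restricts to $A$; thus $A$ is just the commutative, cocommutative Hopf algebra $\k^{\mathbb{Z}_2}$. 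In particular $\gamma:=p_1\#1-p_g\#1$ is group-like with $\Delta(\gamma)=\gamma\otimes\gamma$, $\varepsilon(\gamma)=1$ and $\gamma\cdot\gamma=1_H$.

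First I would determine $R$ on the group-like elements of $A$. Condition $(CQT0)$ of Proposition \ref{prop:CQTstrcuture} forces $R(1_H,a)=R(a,1_H)=\varepsilon(a)$ for all $a\in H$, so $R(1_H,1_H)=R(1_H,\gamma)=R(\gamma,1_H)=1$. Applying the first identity in \eqref{CQT1} with all three arguments equal to $\gamma$, and using $\Delta(\gamma)=\gamma\otimes\gamma$ and $\gamma\cdot\gamma=1_H$, I get $1=R(\gamma,1_H)=R(\gamma,\gamma)^2$; hence $R(\gamma,\gamma)=\epsilon$ with $\epsilon\in\{1,-1\}$. (Alternatively one can reach this point via Corollary \ref{lem:R(gS)}: since $1_F\in S$, the subalgebra $A$ sits inside the abelian group algebra $\k^{\mathbb{Z}_2}{}^\tau\#_\sigma\k S$ on which $R$ is a bicharacter, whence $R(\gamma,\gamma)^2=R(\gamma^2,\gamma)=R(1_H,\gamma)=1$.)

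Finally I would recover the four required values by bilinearity of $R$, using $p_1\#1=\tfrac12(1_H+\gamma)$ and $p_g\#1=\tfrac12(1_H-\gamma)$. Expanding each $R(p_a\#1,p_b\#1)$ into the four terms $R(1_H,1_H)$, $R(1_H,\gamma)$, $R(\gamma,1_H)$, $R(\gamma,\gamma)$ gives $R(p_1\#1,p_1\#1)=\tfrac{3+\epsilon}{4}$, $R(p_1\#1,p_g\#1)=R(p_g\#1,p_1\#1)=\tfrac{1-\epsilon}{4}$ and $R(p_g\#1,p_g\#1)=\tfrac{\epsilon-1}{4}$. Setting $\epsilon=-1$ gives exactly case (1) and $\epsilon=1$ gives exactly case (2).

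Everything here is elementary once the structure of $A$ is in hand; the only step that needs genuine care is verifying that $\gamma$ is group-like with $\gamma\cdot\gamma=1_H$, since it is precisely this involution relation that forces $R(\gamma,\gamma)$ to be a square root of $1$ and rules out all other possibilities. A fully hands-on variant, avoiding $\gamma$ altogether, uses $(CQT0)$ directly to write $R(p_1\#1,p_1\#1)=x$, $R(p_1\#1,p_g\#1)=R(p_g\#1,p_1\#1)=1-x$, $R(p_g\#1,p_g\#1)=x-1$ for a single unknown $x$, and then applies \eqref{CQT1} to $R\bigl(p_1\#1,(p_1\#1)(p_1\#1)\bigr)=R(p_1\#1,p_1\#1)$ to obtain $x=x^2+(1-x)^2$, i.e. $(2x-1)(x-1)=0$, which again yields the two asserted cases.
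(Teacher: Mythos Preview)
Your proof is correct. Your ``hands-on variant'' at the end is precisely the paper's own argument: parametrize the four values by one unknown using $(CQT0)$, then plug into $(CQT1)$ at $f=f'=f''=1_F$, $g=h=l=1_G$ to get a quadratic with roots corresponding to the two cases.

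Your primary argument via the group-like element $\gamma=p_1\#1-p_g\#1$ is a mildly different route. Instead of solving a quadratic in coordinates, you identify $A\cong\k\mathbb{Z}_2$ as a group algebra and observe that the restriction of $R$ must be a bicharacter on $\{1_H,\gamma\}$, which forces $R(\gamma,\gamma)\in\{\pm1\}$; the four values then drop out by linear change of basis. This buys a cleaner conceptual explanation of \emph{why} exactly two cases arise (they are the two bicharacters on $\mathbb{Z}_2$), and it scales immediately to larger cyclic $G$. The paper's direct computation, on the other hand, stays entirely within the $(CQT)$ identities and avoids checking that $\gamma$ is group-like and that $\gamma^2=1_H$. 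Both arguments are of comparable length and rigor; nothing is missing in yours.
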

\begin{proof}
Setting $f=1$ in (CQT0), we find that for $x=1$ or $g,$ the following holds:
\begin{eqnarray*}
R(p_1\#1, p_x\#1)+R(p_g\#1, p_x\#1)
=R(p_x\#1, p_1\#1)+ R(p_x\#1, p_g\#1)=\delta_{x, 1}.
\end{eqnarray*}
Suppose $R(p_1\#1, p_g\#1)=k$ for some $k\in\k$. It is straightforward to show that
$$R(p_1\#1, p_1\#1)=1-k,\;\;R(p_g\#1, p_1\#1)=k,\;\;R(p_g\#1, p_g\#1)=-k.$$
With the substitutions $f=f^\prime=f^{\prime\prime}=1$ and $g=l=h=1$ in (CQT1), we have
\begin{eqnarray*}
R(p_1\#1,p_1\#1)=R(p_{1}\#1, p_1\#1)R(p_1\#1, p_1\#1)+R(p_{g}\#1, p_1\#1)R(p_g\#1, p_1\#1).
\end{eqnarray*}
This leads to the equation
$$1-k=(1-k)^2+k^2.$$
Solving it gives $k=0$ or $k=\frac{1}{2}$.
\end{proof}

\begin{corollary}
Let $(\k^{\mathbb{Z}_2}{}^\tau\#_\sigma\k F, R)$ be a coquasitriangular Hopf algebra, $x\in \mathbb{Z}_2$ and $f\in F$.
\begin{itemize}
\item[(1)]If $R(p_1\#1, p_g\#1)=\frac{1}{2}$, then
$
R(p_x\#1, p_1\#f)=R(p_1\# f, p_x\#1)=\frac{1}{2}
$;
\item[(2)]If $R(p_1\#1, p_g\#1)=0$, then
$
R(p_x\#1, p_g\#f)=R(p_g\# f, p_x\#1)=0.
$
\end{itemize}
\end{corollary}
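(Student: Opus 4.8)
The plan is to work entirely from the explicit conditions $(CQT0)$--$(CQT2)$ of Proposition \ref{prop:CQTstrcuture} together with the dichotomy of Lemma \ref{lem:R11}, reducing the statement to a couple of short linear systems. Note first that the hypotheses of (1) and (2) are exactly the two alternatives $R(p_1\#1,p_g\#1)=\tfrac12$ and $R(p_1\#1,p_g\#1)=0$ distinguishing the two cases of Lemma \ref{lem:R11}, so in each part all four numbers $R(p_x\#1,p_y\#1)$ with $x,y\in\{1,g\}$ are known. After fixing $f\in F$, the first step is to apply $(CQT0)$ to the elements $p_1\#f$ and $p_g\#f$: since $1_H=p_1\#1+p_g\#1$ and $\varepsilon(p_x\#f)=\delta_{x,1}$, this gives the ``row-sum'' relations $R(p_1\#1,p_1\#f)+R(p_g\#1,p_1\#f)=1$, $R(p_1\#1,p_g\#f)+R(p_g\#1,p_g\#f)=0$, together with the two mirror relations obtained by swapping the arguments of $R$.

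The key simplifying idea is to specialize every $F$-argument appearing in $(CQT1)$ and $(CQT2)$ to $1_F$ whenever possible. By the normalizations $\sigma(g;1_F,f)=1$ and $\tau(-,-;1_F)=1$, and --- by Remark \ref{lem:actioninverse} --- $g\triangleright 1_F=1_F$ and $g\triangleleft 1_F=g$, every cocycle factor and every crossed action then drops out. Concretely, in $(CQT1)$ I would set its first $F$-argument and ``$f^\prime$'' equal to $1_F$ and ``$f^{\prime\prime}$'' equal to $f$, and then take $l=h$; after specializing $G=\mathbb{Z}_2$ (so that $x^{-1}=x$) this collapses $(CQT1)$ to the recursion $$R(p_\gamma\#1,p_\eta\#f)=\sum_{x\in\mathbb{Z}_2}R(p_{\gamma x}\#1,p_\eta\#f)\,R(p_x\#1,p_\eta\#1),\qquad\gamma,\eta\in\{1,g\},$$ which is a two-term linear relation in the unknowns $R(p_1\#1,p_\eta\#f)$ and $R(p_g\#1,p_\eta\#f)$, with coefficients the known values $R(p_x\#1,p_\eta\#1)$. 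In the same way, specializing $(CQT2)$ (first $F$-argument $1_F$, ``$f^\prime$'' equal to $f$, ``$f^{\prime\prime}$'' equal to $1_F$, and $h$ equal to the first $G$-index) yields the analogous relation $R(p_\mu\#f,p_l\#1)=\sum_{x\in\mathbb{Z}_2}R(p_\mu\#1,p_{lx}\#1)\,R(p_\mu\#f,p_x\#1)$ for the quantities sitting in the second slot of $R$.

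For part (1) we are in the case $R(p_1\#1,p_1\#1)=R(p_g\#1,p_1\#1)=\tfrac12$; taking $\eta=1$ above, the recursion becomes $R(p_\gamma\#1,p_1\#f)=\tfrac12\bigl(R(p_1\#1,p_1\#f)+R(p_g\#1,p_1\#f)\bigr)$ for both $\gamma\in\{1,g\}$, hence $R(p_1\#1,p_1\#f)=R(p_g\#1,p_1\#f)$, and the row-sum relation then forces each of them to equal $\tfrac12$. The corresponding statement for the second slot follows identically from the $(CQT2)$-recursion with $\mu=1$ and the mirror row-sum relation. For part (2) we are in the case $R(p_1\#1,p_g\#1)=R(p_g\#1,p_g\#1)=0$; taking $\eta=g$ above, every coefficient on the right of the recursion vanishes, so $R(p_1\#1,p_g\#f)=R(p_g\#1,p_g\#f)=0$ at once, and symmetrically the $(CQT2)$-recursion with $\mu=g$ gives $R(p_g\#f,p_1\#1)=R(p_g\#f,p_g\#1)=0$.

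I do not expect a genuine obstacle here: once the reduction is set up the rest is elementary linear algebra. The only delicate point is choosing the substitutions in $(CQT1)$ and $(CQT2)$ so that all occurrences of $\sigma$, $\tau$ and the two group actions become trivial and the Kronecker deltas land favourably --- this is precisely where the identities of Remark \ref{lem:actioninverse} enter --- together with keeping in mind that (1) and (2) concern mutually exclusive cases, so that in each the four base values $R(p_x\#1,p_y\#1)$ are available from Lemma \ref{lem:R11}.
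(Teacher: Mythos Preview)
Your proposal is correct and follows essentially the same route as the paper: both arguments specialize $(CQT1)$ and $(CQT2)$ so that all cocycle and action terms trivialize (setting two of the three $F$-variables to $1_F$ and matching the appropriate $G$-indices), combine the resulting two-term recursion with the row-sum relations coming from $(CQT0)$, and then plug in the base values from Lemma~\ref{lem:R11}. The only cosmetic difference is that you keep the first $G$-index $\gamma$ free in the $(CQT1)$-recursion while the paper fixes it to $1$, but this yields the same pair of linear equations and the same conclusion.
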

\begin{proof}
With
$f=f^\prime =1, h=l, g=1$
 in (CQT1), this gives
 \begin{eqnarray*}
R(p_1\#1, p_x\#f)=R(p_1\#1, p_x\#f)R(p_1\#1, p_x\#1)+R(p_g\#1, p_x\#f)R(p_g\#1, p_x\#1)
 \end{eqnarray*}
 for any $x\in \mathbb{Z}_2.$
Suppose $R(p_1\#1, p_g\#1)=k\in\k$. It follows from (CQT0) and the proof of Lemma \ref{lem:R11} that
\begin{eqnarray*}
\left\{
\begin{aligned}
kR(p_g\#1, p_1\#f)-kR(p_1\#1, p_1\#f)=0, \\
R(p_g\#1, p_1\#f)+R(p_1\#1, p_1\#f)=1,
\end{aligned}
\right.
\end{eqnarray*}
and
\begin{eqnarray*}
\left\{
\begin{aligned}
(1-k)R(p_1\#1, p_g\# f)+kR(p_g\#1, p_g\#f)=0,\\
R(p_1\#1, p_g\# f)+R(p_g\#1, p_g\#f)=0.
\end{aligned}
\right.
\end{eqnarray*}
Substituting $k=\frac{1}{2}$ and $k=0$ into the above system yields the respective results: for $k=\frac{1}{2}$, $$R(p_g\#1, p_1\#f)=R(p_1\#1, p_1\#f)=\frac{1}{2};$$ for $k=0$, $$R(p_1\#1, p_g\# f)=R(p_g\#1, p_g\#f)=0.$$ Similarly, setting $f=f^{\prime\prime}= 1, g=h, l=1$ in (CQT2) and following the same procedure, we can prove the remaining results.
\end{proof}

Furthermore, through some calculations, we can obtain some equations for $R.$
Setting $h=l, f^\prime=1$ in (CQT1), we have
\begin{eqnarray*}
R(p_x\#f, p_h\#f^{\prime\prime})&=&R(p_x\#f, p_h\# f^{\prime\prime})R(p_1\#f, p_h\#1)\\
&&+\tau(xg,g;f)R(p_{xg}\#(g\triangleright f), p_h\# f^{\prime\prime})R(p_g\#f, p_h\#1).
\end{eqnarray*}
Using similar arguments for (CQT2), we find that
\begin{eqnarray*}
R(p_h\# f^\prime, p_l\# f^{\prime\prime})&=& R(p_h\# 1, p_l\# f^{\prime\prime})R(p_h\# f^\prime, p_1\# f^{\prime\prime})\\
&&+\tau(lg, g;f^{\prime\prime})R(p_h\# 1, p_{lg}\#(g\triangleright f^{\prime\prime})) R(p_h\# f^{\prime}, p_g\# f^{\prime\prime}).
\end{eqnarray*}
If $R(p_1\# 1, p_g\#1)=\frac{1}{2}$, then we have
\begin{eqnarray*}
R(p_x\# f, p_h\# f^\prime)&=&2\tau(xg,g;f)R(p_{xg}\#(g\triangleright f), p_h\# f^{\prime})R(p_g\#f, p_h\#1),\\
R(p_x\#f, p_1\#f^\prime)&=&2\tau(g, g;f^{\prime})R(p_x\# 1, p_{g}\#(g\triangleright f^{\prime})) R(p_x\# f, p_g\# f^{\prime}),\\
R(p_x\#f, p_g\# f^\prime)&=&2R(p_x\#1, p_g\#f^\prime) R(p_x\# f, p_1\# f^\prime),
\end{eqnarray*}
where $x, h\in G, f, f^\prime \in F.$
This means that
\begin{eqnarray*}
R(p_x\# f, p_h\# f^\prime)&=& 4\tau(xg, g; f) \tau(x, g; g\triangleright f)R(p_x\#f, p_h\#f^{\prime})R(p_g\#(g\triangleright f), p_h\#1) \\
&&R(p_g\#f, p_h\#1)\\
&=& 4\tau(g,g;f)R(p_x\#f, p_h\#f^{\prime})
R(p_g\#(g\triangleright f), p_h\#1)R(p_g\#f, p_h\#1).
\end{eqnarray*}
If $R(p_1\# 1, p_g\# 1)=0,$ then we obatin
\begin{eqnarray*}
R(p_x\# f, p_h\# f^\prime)&=& R(p_x\# f, p_h\#f^\prime)R(p_1\# f, p_h\#1),\\
R(p_x\# f, p_1\# f^\prime)&=&R(p_x\#1, p_1\#f^\prime) R(p_x\# f, p_1\#f^\prime ),\\
R(p_x\# f, p_g\#f^\prime)&=&R(p_x\# 1, p_1\#(g\triangleright f^\prime))R(p_x\#f, p_g\# f^\prime).
\end{eqnarray*}

\begin{remark}
Let $(\k^{\mathbb{Z}_2}{}^\tau\#_\sigma\k F, R)$ be a coquasitriangular Hopf algebra, $x, h\in G, f, f^\prime \in F.$
\begin{itemize}
\item[(1)]If $R(p_1\#1, p_g\#1)=\frac{1}{2}$, then either
$R(p_{xg}\#(g\triangleright f), p_h\# f^{\prime})R(p_g\#f, p_h\#1)=0,$
or $\tau(g,g;f)R(p_g\#(g\triangleright f), p_h\#1)R(p_g\#f, p_h\#1)=\frac{1}{4}.$
\item[(2)]If $R(p_1\#1, p_g\#1)=0$, the bilinear form $R$ satisfies:
\begin{eqnarray*}
&& \text{Either } R(p_x\# f, p_1\# f') = 0, \text{ or } R(p_1\# f, p_1\# 1) = R(p_x\# 1, p_1\# f') = 1; \\
&& \text{and either } R(p_x\# f, p_g\# f') = 0, \text{ or } R(p_1\# f, p_g\# 1) = R(p_x\# 1, p_1\# (g \triangleright f')) = 1.
\end{eqnarray*}
\end{itemize}
\end{remark}
Furthermore, if the group
$F$ is abelian, then several additional results can be drawn. Note that if $F=S$, Corollary \ref{lem:R(gS)} implies all the coquasitriangular structures $R$ are determined by bicharacters. We now turn to the case where $F\neq S$, that is, $T\neq \O.$
\begin{lemma}\label{lem:abelianRgh}
Let $F$ be an abelian group and consider the coquasitriangular Hopf algebra $(\k^{\mathbb{Z}_2}{}^\tau\#_\sigma\k F, R)$. If $F\neq S$, then
\begin{itemize}
\item [(1)]$R(p_g\# f, p_1\# f^\prime)=R(p_1\# f, p_g\# f^\prime)=R(p_g\# f, p_g\# f^\prime)=0$ for any $f, f^\prime \in S;$
\item [(2)]$R(p_1\# f, p_1\# f^\prime)R(p_g\# f, p_g\# f^{\prime\prime})=0$ for any $f^\prime\in F, f, f^{\prime\prime}\in T;$
\item [(3)]$R(p_1\# f, p_1\# f^\prime)R(p_g\# f^{\prime\prime}, p_g\# f^{\prime})=0$ for any $f\in F, f^\prime, f^{\prime\prime}\in T.$
\end{itemize}
\end{lemma}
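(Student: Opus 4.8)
The plan is to extract everything from the behaviour of the group‑like element $V:=p_1\#1-p_g\#1=\chi(\tilde V_1)$, together with the $2$‑dimensional comodules $\tilde W_t$ ($t\in T$), which are available exactly because $F\neq S$. Write $\xi(f):=R(p_1\#f,V)$ and $\xi'(f):=R(V,p_1\#f)$. First I would fix $t\in T$; then $G_t=\{1\}$, $g\triangleright t\neq t$ and $g\triangleright(g\triangleright t)=t$, so $g\triangleright t\in T$ as well, and Corollary~\ref{coro:FabelianRgf=0} kills all terms of the form $R(p_x\#s,p_g\#t')$ and $R(p_g\#t',p_x\#s)$ with $s\in S$, $t'\in T$. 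Substituting $f=f'=1$, $h=g$, $l=1$, $f''=t$ into $(CQT2)$ and using this vanishing (it removes the term $R(p_g\#1,p_g\#(g\triangleright t))$) gives $R(p_g\#1,p_1\#t)=R(p_g\#1,p_1\#t)^2\in\{0,1\}$, which is incompatible with the value $\tfrac12$ that case (1) would force for $R(p_g\#1,p_1\#t)$ by the corollary following Lemma~\ref{lem:R11}. Hence we are in case (2) of Lemma~\ref{lem:R11}, so $R(p_g\#f,p_g\#1)=R(p_g\#1,p_g\#f)=0$ for all $f$. Since $V$ is group‑like, $R(-,V)$ and $R(V,-)$ are algebra maps $H\to\k$ by $(CQT1)$; the vanishing just obtained forces $R(p_g\#1,V)=R(V,p_g\#1)=0$, so both maps are supported on $\{p_1\#f\}$ and restrict to group characters $\xi,\xi'\colon F\to\k^{\times}$, which are $\mathbb Z_2$‑valued because $R(p_1\#f,p_g\#1)=\tfrac12(1-\xi(f))$ and $R(p_g\#1,p_1\#f)=\tfrac12(1-\xi'(f))$ lie in $\{0,1\}$ by the dichotomies already recorded in the text. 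Finally, feeding $V$ into the two halves of $(CQT1)$ and using $R(p_g\#(g\triangleright f),V)=R(V,p_g\#(g\triangleright f))=0$ yields the key identities $R(p_1\#f,p_1\#f')(1-\xi(f))=0=R(p_1\#f,p_1\#f')(1-\xi'(f'))$ for all $f,f'\in F$.

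For parts (2) and (3): specialising the relation obtained from $(CQT1)$ (with "$h=l$, $f'=1$") at $x=h=g$ and using $R(p_g\#f,p_g\#1)=0$ gives $R(p_g\#f,p_g\#f'')\,(1+\xi(f))=0$, so for $f\in T$ either $R(p_g\#f,p_g\#f'')=0$ or $\xi(f)=-1$; in the latter case the identity above gives $R(p_1\#f,p_1\#f')=0$, so the product in (2) vanishes. Part (3) is the same via the companion relation coming from $(CQT2)$: for $f',f''\in T$ one gets $R(p_g\#f'',p_g\#f')=0$ unless $\xi'(g\triangleright f')=-1$, whereas $R(p_1\#f,p_1\#f')\neq0$ forces $\xi'(f')=1$; since $\xi'(g\triangleright f')=\xi'(f')$ — which follows from the fact that the braiding $c_{\tilde V_1,\tilde W_{f'}}$ is a comodule morphism (a short direct check on the explicit coaction of $\tilde W_{f'}$ from Lemma~\ref{lem:simplecomod}) — these are incompatible, so the product in (3) vanishes.

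For part (1): by Corollary~\ref{lem:R(gS)}, $R$ restricts to a bicharacter $\beta$ on the abelian group algebra $\k^{\mathbb Z_2}{}^\tau\#_\sigma\k S$, whose group of group‑likes is generated by $V$ and the $U_h:=p_1\#h+\sqrt{\tau(g,g;h)}\,p_g\#h$ ($h\in S$), with $V_h:=p_1\#h-\sqrt{\tau(g,g;h)}\,p_g\#h=U_hV$, $V^2=1$ and $\beta(V,V)=1$. Expanding $p_g\#f$ and $p_1\#f$ in this basis and using $\beta(U_h,V)=\xi(h)$, $\beta(V,U_h)=\xi'(h)$ gives, for $f,f'\in S$,
\[
R(p_g\#f,p_1\#f')=\frac{\beta(U_f,U_{f'})}{4\sqrt{\tau(g,g;f)}}\,(1+\xi(f))(1-\xi'(f')),\qquad
R(p_g\#f,p_g\#f')=\frac{\beta(U_f,U_{f'})}{4\sqrt{\tau(g,g;f)\tau(g,g;f')}}\,(1-\xi(f))(1-\xi'(f')),
\]
and similarly $R(p_1\#f,p_g\#f')=\tfrac{\beta(U_f,U_{f'})}{4\sqrt{\tau(g,g;f')}}(1-\xi(f))(1+\xi'(f'))$. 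As $\xi(1_F)=\xi'(1_F)=1$, all three vanish for every $f,f'\in S$ precisely when $\xi|_S\equiv1$ and $\xi'|_S\equiv1$, and establishing this is the crux of the proof — the only place where the $2$‑dimensional comodules are needed essentially. Here the plan is to apply the hexagon identity to $\tilde U_s,\tilde W_t,\tilde W_{t^{-1}}$ and to compare the scalar by which $c_{\tilde U_s,-}$ acts on the summands $\tilde U_{1_F}=\unit$ and $\tilde V_1$ of $\tilde W_t\otimes\tilde W_{t^{-1}}$ (which occur by Proposition~\ref{prop:Grz2}; on $\tilde V_1$ that scalar is exactly $\xi(s)$) with the diagonal entries of $c_{\tilde U_s,\tilde W_t}$ and $c_{\tilde U_s,\tilde W_{t^{-1}}}$, forcing $\beta(U_s,V)=\beta(V,U_s)=1$. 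The hard part is precisely this last step: parts (2) and (3) fall out cleanly from $V$ and the $(CQT)$‑relations, but pinning down $\xi|_S$ and $\xi'|_S$ genuinely requires the $2$‑dimensional comodules $\tilde W_t$ and the hexagon/naturality axioms of the braiding.
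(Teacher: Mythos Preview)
Your approach is correct but takes a genuinely different and much more elaborate route than the paper's. You work through the group-like $V$, build the characters $\xi,\xi'$, establish case~(2) of Lemma~\ref{lem:R11} first, and then reduce part~(1) to showing $\xi|_S\equiv 1$ and $\xi'|_S\equiv 1$ via a hexagon/naturality argument on $\tilde U_s,\tilde W_t,\tilde W_{t^{-1}}$. That last step does go through (one checks that the $\unit$- and $\tilde V_1$-summands of $\tilde W_t\otimes\tilde W_{t^{-1}}$ both sit inside $\span\{w_1\otimes w'_1,\,w_2\otimes w'_2\}$ with nonzero coefficients on each basis vector, whence comparing hexagon eigenvalues forces $1=\alpha_1\beta_1=\xi(s)$), but it is precisely what you flag as ``the hard part'', and you leave it as a sketch.

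The paper avoids this entirely with one elementary observation you are missing: since $F\neq S$ there exists $t\in T$, and for any $s\in S$ one has $st\in T$, hence $s=(st)t^{-1}\in T\cdot T$. So every element of $S$ factors as a product of two elements of $T$. Writing the second argument $f'\in S$ as $t_1t_2$ with $t_1,t_2\in T$ and applying $(CQT1)$ with $h=l=g$ expands $R(p_x\#f,p_g\#f')$ into two summands, each containing a factor $R(p_y\#f,p_g\#t_i)$ with $f\in S$ and $t_i\in T$; these vanish by Corollary~\ref{coro:FabelianRgf=0}, giving $R(p_1\#f,p_g\#f')=R(p_g\#f,p_g\#f')=0$ at once. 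The symmetric $(CQT2)$ argument handles $R(p_g\#f,p_1\#f')$. No braiding, no hexagon, no $\xi$.

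Parts~(2) and~(3) are likewise one-line $(CQT1)$/$(CQT2)$ substitutions with $h\neq l$: for~(2) take $h=1,l=g,x=g$ in $(CQT1)$ (your argument for~(2) is essentially the same, just phrased through $\xi$), and for~(3) take the analogous $(CQT2)$ substitution with first index $g$, second $1$, third $g$ --- Corollary~\ref{coro:FabelianRgf=0} kills one term and the surviving term \emph{is} the desired product. Your route to~(3) via the invariance $\xi'(g\triangleright f')=\xi'(f')$ (from Schur on $c_{\tilde V_1,\tilde W_{f'}}$) is valid but unnecessary. In short: your machinery buys conceptual clarity about \emph{why} things vanish (everything is controlled by two $\pm1$-valued characters), while the paper's $S\subseteq T\cdot T$ trick gives a short, self-contained proof that never leaves the $(CQT)$ identities.
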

\begin{proof}
For any $s \in S$ and $t \in T$, we have $st \in T$, and thus $s=stt^{-1}\in \{t_1t_2\mid t_1, t_2\in T\}$. Take
$h=l$ in (CQT1), we have
\begin{eqnarray*}
&&\sigma(h; f^\prime,  f^{\prime\prime})R(p_x\#f, p_h\#f^\prime f^{\prime\prime})\\&=&R(p_x\#f, p_h\# f^{\prime\prime})R(p_1\#f, p_h\#f^\prime)
+\tau(xg,g;f)R(p_{xg}\#(g\triangleright f), p_h\# f^{\prime\prime})R(p_g\#f, p_h\#f^\prime).
\end{eqnarray*}
Without loss of generality, suppose $f\in S$ and $s=f^\prime f^{\prime\prime}\in S,$ where $f^\prime, f^{\prime\prime}\in T$. It follows from Corollary \ref{coro:FabelianRgf=0} that $R(p_x\#f, p_h\#f^\prime f^{\prime\prime})=0$ for $h=g$ and $x\in \mathbb{Z}_2$. Moreover,
By substituting
$h\neq l$ into (CQT1), we find that
\begin{eqnarray*}
0=R(p_x\#f, p_{hg}\# f^{\prime\prime})R(p_1\#f, p_h\#f^\prime)
+\tau(xg,g;f)R(p_{xg}\#(g\triangleright f), p_{hg}\# f^{\prime\prime})R(p_g\#f, p_h\#f^\prime).
\end{eqnarray*}
Suppose $ f, f^{\prime\prime}\in T, h=1, x=g$.
Corollary \ref{coro:FabelianRgf=0} implies that $$R(p_1\# f, p_1\# f^\prime)R(p_g\# f, p_g\# f^{\prime\prime})=0.$$ Applying a similar analysis to (CQT2) gives the remaining results.
\end{proof}
Combining Lemmas \ref{lem:R11} and \ref{lem:abelianRgh}, we have the following corollary.
\begin{corollary}
Let $(\k^{\mathbb{Z}_2}{}^\tau\#_\sigma\k F, R)$ be a coquasitriangular Hopf algebra, where $F$ is an abelian group and $F\neq S$.
Then
 $R(p_1\#1, p_1\#1)=1,$ $R(p_1\#1, p_g\#1)=R(p_g\#1, p_1\#1)=R(p_g\#1, p_g\# 1)=0$.
 \end{corollary}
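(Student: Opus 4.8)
The plan is to show that the hypothesis $F \neq S$ rules out the first of the two alternatives provided by Lemma \ref{lem:R11}, so that only the second — which is precisely the asserted conclusion — can occur. First I would record that $1_F \in S$, since $g \triangleright 1_F = 1_F$ by Remark \ref{lem:actioninverse}(2). Thus $1_F$ is an admissible value for the elements $f, f^\prime$ appearing in Lemma \ref{lem:abelianRgh}(1).

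Next, invoke Lemma \ref{lem:R11}: exactly one of its two cases holds. In case (1) one would have $R(p_1\#1, p_g\#1) = \tfrac12$. On the other hand, because $F \neq S$, Lemma \ref{lem:abelianRgh}(1) applies, and specializing $f = f^\prime = 1_F$ in it gives
\[
R(p_g\#1, p_1\#1) = R(p_1\#1, p_g\#1) = R(p_g\#1, p_g\#1) = 0 .
\]
This contradicts $R(p_1\#1, p_g\#1) = \tfrac12 \neq 0$, so case (1) of Lemma \ref{lem:R11} is impossible.

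Therefore case (2) of Lemma \ref{lem:R11} must hold, which is exactly the statement that $R(p_1\#1, p_1\#1) = 1$ and $R(p_1\#1, p_g\#1) = R(p_g\#1, p_1\#1) = R(p_g\#1, p_g\#1) = 0$. I do not anticipate any genuine obstacle here: the corollary is a direct bookkeeping combination of the two preceding lemmas, the only subtle point being to observe that $1_F$ lies in $S$ so that Lemma \ref{lem:abelianRgh} can be fed the trivial group element, and that the hypothesis $F \neq S$ is precisely what licenses the use of that lemma.
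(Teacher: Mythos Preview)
Your proposal is correct and follows exactly the approach the paper intends: the corollary is stated as an immediate combination of Lemmas \ref{lem:R11} and \ref{lem:abelianRgh}, and you have spelled out precisely that combination, including the minor observation that $1_F\in S$ so that Lemma \ref{lem:abelianRgh}(1) can be specialized to $f=f^\prime=1_F$.
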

 We therefore establish the following proposition in the setting where
$F$ is an abelian group and $F\neq S$, which can be regarded as a generalization of \cite[Proposition 3.6]{ZL21}.
\begin{proposition}\label{prop:FabelianR}
Let $(\k^{\mathbb{Z}_2}{}^\tau\#_\sigma\k F, R)$ be a coquasitriangular Hopf algebra, where $F$ is an abelian group and $F\neq S$. Then one of the following two cases must hold:
\begin{itemize}
  \item [(1)]$R(p_1\#f, p_g\#f^\prime)=R(p_g\#f, p_1\#f^\prime)=R(p_g\#f, p_g\#f^\prime)=0$ for any $f, f^\prime \in F.$
  \item [(2)]$R(p_x\#f, p_y\# f^\prime)$ vanishes except in the following instances:
  \begin{itemize}
    \item [(i)]$R(p_1\#f, p_1\# f^\prime)\neq0$ only when $f, f^\prime \in S;$
    \item [(ii)]$R(p_1\#f, p_g\# f^\prime)\neq0$ only when $f \in T, f^\prime \in S;$
    \item [(iii)]$R(p_g\#f, p_1\# f^\prime)\neq0$ only when $f \in S, f^\prime\in T;$
    \item [(iv)]$R(p_g\#f, p_g\# f^\prime)\neq0$ only when $f, f^\prime \in T.$
  \end{itemize}
\end{itemize}
\end{proposition}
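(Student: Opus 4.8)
The plan is to feed the vanishing results already in hand — Corollary~\ref{coro:FabelianRgf=0}, Lemma~\ref{lem:abelianRgh}(1), and the unnumbered corollary just before the statement — into the ``reduced'' coquasitriangularity identities displayed before Lemma~\ref{lem:abelianRgh}. Since $F\neq S$, that corollary puts us in the second case of Lemma~\ref{lem:R11}: $R(p_1\#1,p_1\#1)=1$ and $R(p_1\#1,p_g\#1)=R(p_g\#1,p_1\#1)=R(p_g\#1,p_g\#1)=0$. In particular $R(p_1\#1,p_g\#1)=0$, so throughout I may use the three identities $R(p_x\#f,p_h\#f')=R(p_x\#f,p_h\#f')R(p_1\#f,p_h\#1)$, $R(p_x\#f,p_1\#f')=R(p_x\#1,p_1\#f')R(p_x\#f,p_1\#f')$ and $R(p_x\#f,p_g\#f')=R(p_x\#1,p_1\#(g\triangleright f'))R(p_x\#f,p_g\#f')$. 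First I would observe that parts (ii), (iii), (iv) of conclusion~(2) hold unconditionally: Corollary~\ref{coro:FabelianRgf=0} already gives $R(p_1\#f,p_g\#f')=0$ unless $f'\in S$, $R(p_g\#f,p_1\#f')=0$ unless $f\in S$, and $R(p_g\#f,p_g\#f')=0$ unless $f,f'$ lie in the same member of $\{S,T\}$; intersecting these with the vanishing $R(p_g\#f,p_1\#f')=R(p_1\#f,p_g\#f')=R(p_g\#f,p_g\#f')=0$ for $f,f'\in S$ from Lemma~\ref{lem:abelianRgh}(1) yields precisely (ii)--(iv). So only conclusion~(1) versus part (2)(i) remains in dispute.

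Next I would analyse $R$ on the ``diagonal'' elements $p_1\#f$. Applying the axioms~(\ref{CQT1}) to $(p_1\#1)(p_1\#1)$ and discarding the lone surviving cross term by (ii), resp.\ (iii), gives $R(p_1\#1,p_1\#f)\in\{0,1\}$ and $R(p_1\#f,p_1\#1)\in\{0,1\}$ for all $f$. Set $A:=\{f\in F:R(p_1\#f,p_1\#1)=1\}$ and $B:=\{f\in F:R(p_1\#1,p_1\#f)=1\}$. The first reduced identity at $h=1$ and the second at $x=1$ yield $R(p_1\#f,p_1\#f')\neq0\Rightarrow f\in A$ and $f'\in B$; the axiom $(CQT0)$ gives $R(p_1\#f,p_g\#1)=1-\mathbf{1}_A(f)$ and $R(p_g\#1,p_1\#f)=1-\mathbf{1}_B(f)$; and (ii), (iii) force $S\subseteq A\cap B$. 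Finally, expanding $R(p_1\#f,p_1\#(f''f'))$ via~(\ref{CQT1}) and killing the cross term by (iii) (resp.\ (ii)) shows that for $f\in T$ the map $f'\mapsto R(p_1\#f,p_1\#f')$ is multiplicative, and symmetrically for $f'\in T$ the map $f\mapsto R(p_1\#f,p_1\#f')$ is multiplicative.

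Now comes the dichotomy. If $f_0\in A\cap T$, then $f'\mapsto R(p_1\#f_0,p_1\#f')$ is multiplicative with value $1$ at $1_F$, hence a homomorphism $F\to\k^\times$, hence nowhere zero; combined with $R(p_1\#f_0,p_1\#f')\neq0\Rightarrow f'\in B$ this forces $B=F$. As $F\neq S$ this means $B\cap T\neq\emptyset$, and the mirror argument forces $A=F$. Thus $A\cap T\neq\emptyset\Rightarrow A=F$, so (using $S\subseteq A$) $A\in\{S,F\}$; symmetrically $B\in\{S,F\}$, and the same reasoning gives $A=F\iff B=F$. Hence $(A,B)=(S,S)$ or $(F,F)$. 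In the first case $R(p_1\#f,p_1\#f')\neq0\Rightarrow f,f'\in S$, which is (2)(i), so conclusion~(2) holds. In the second case $R(p_1\#f,p_g\#1)=R(p_g\#1,p_1\#f)=0$ for all $f$; plugging the former into the first reduced identity at $h=g$ gives $R(p_x\#f,p_g\#f')=0$ for all $x,f,f'$ — disposing of both $R(p_1\#f,p_g\#f')$ and $R(p_g\#f,p_g\#f')$ — and plugging the latter into the second reduced identity at $x=g$ gives $R(p_g\#f,p_1\#f')=0$; this is conclusion~(1).

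The main obstacle is this last dichotomy, specifically ruling out $S\subsetneq A\subsetneq F$. The mechanism is the multiplicativity of $f'\mapsto R(p_1\#f_0,p_1\#f')$ for $f_0\in T$: it promotes the mere membership $f_0\in A$ to the statement that $R(p_1\#f_0,p_1\#-)$ is a nowhere-vanishing character, which then collapses $B$ to all of $F$. Carrying this out cleanly requires checking that the only cross term in the expansion of $R(p_1\#f_0,p_1\#(f''f'))$ is $\tau(g,g;f_0)R(p_g\#(g\triangleright f_0),p_1\#f')R(p_g\#f_0,p_1\#f'')$, which vanishes by (iii) exactly because $f_0,g\triangleright f_0\in T$ — here the hypothesis that $F$ is abelian is what keeps the product $(p_1\#f'')(p_1\#f')$ in the $p_1$-component of $\k^{\mathbb{Z}_2}$ — and that the argument is symmetric in the two tensor factors so that ``$B=F$'' feeds back to ``$A=F$''. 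The routine parts are the expansions of the four axioms on the monomials $p_x\#f$, which I would not write out in detail.
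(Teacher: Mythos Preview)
Your proof is correct and takes a genuinely different route from the paper's. The paper organises the dichotomy around the $(g,g)$-component: if $R(p_g\#u,p_g\#v)=0$ for all $u,v\in T$ it pushes this through (CQT1) to reach case~(1); otherwise it picks $t_1,t_2\in T$ with $R(p_g\#t_1,p_g\#t_2)\neq0$, uses Lemma~\ref{lem:abelianRgh}(2),(3) to kill $R(p_1\#t_1,p_1\#-)$ and $R(p_1\#-,p_1\#t_2)$, and then applies convolution invertibility of $R$ to $(p_1\#t)\otimes(p_1\#t_2)$ to spread the non-vanishing of the $(g,g)$-component to every $t\in T$, whence case~(2). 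You instead pivot on the $(1,1)$-component: having first noted that (ii)--(iv) hold unconditionally, you package the remaining freedom into the support sets $A,B\subseteq F$ and use the multiplicativity of $f'\mapsto R(p_1\#f_0,p_1\#f')$ for $f_0\in T$ --- a character-theoretic argument --- to force $A,B\in\{S,F\}$. Your approach makes the dichotomy structural (it is literally $A=S$ versus $A=F$) and avoids the convolution-inverse trick entirely; the paper's route is more ad hoc but quicker once that trick is invoked. One small correction: your parenthetical that the abelian hypothesis on $F$ ``keeps the product $(p_1\#f'')(p_1\#f')$ in the $p_1$-component'' is off --- that product is always $p_1\#f''f'$ since $1\triangleleft f''=1$ regardless of whether $F$ is abelian; the place abelianness actually enters your argument is through Corollary~\ref{coro:FabelianRgf=0}, which underlies your unconditional (ii)--(iv).
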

\begin{proof}
Suppose $R(p_g\#u, p_g\# v)=0$ for any $u, v \in T.$ According to Corollary \ref{coro:FabelianRgf=0} and Lemma \ref{lem:abelianRgh}(1), $R(p_g\#u, p_g\# v)=0$ for any $u, v \in F.$ For any $s\in S$, suppose $s=f^\prime f^{\prime\prime}$, where $f^\prime ,f^{\prime\prime}\in T$. Using (CQT1), we have
\begin{eqnarray*}
&&\sigma(g; f^\prime,  f^{\prime\prime})R(p_1\#f, p_g\#f^\prime f^{\prime\prime})\\&=&R(p_1\#f, p_g\# f^{\prime\prime})R(p_1\#f, p_g\#f^\prime)
+\tau(g,g;f)R(p_{g}\#(g\triangleright f), p_g\# f^{\prime\prime})R(p_g\#f, p_g\#f^\prime),
\end{eqnarray*}
where $f\in T.$ From Corollary \ref{coro:FabelianRgf=0}, $R(p_1\#f, p_g\#s)=0$ for any $f\in T, s\in S.$ It follows from Corollary \ref{coro:FabelianRgf=0} and Lemma \ref{lem:abelianRgh}(1) that $R(p_1\#u, p_g\#v)=0$ for any $u, v\in F.$ A similar argument shows that $R(p_g\#u, p_1\#v)=0$ for all $u, v\in F.$ Thus, the first case is established.

Suppose there exist some $t_1, t_2\in T$ such that $R(p_g\#t_1, p_g\# t_2)\neq0$. According to Lemma \ref{lem:abelianRgh}, $R(p_1\# t_1, p_1\# f)=R(p_1\# f, p_1\# t_2)=0$ for any $f\in F.$
Since $R$ is convolution-invertible, it follows that $$(R^{-1}*R)((p_1\#t)\otimes (p_1\# t_2))=(R^{-1}\otimes R)\Delta((p_1\#t)\otimes (p_1\# t_2))=1$$ for any $t\in T.$
Note that Corollary \ref{coro:FabelianRgf=0} immediately gives $R(p_1\# t, p_g\# t_2) = R(p_g\# t, p_1\# t_2) = 0$. One finds that $R(p_g\# t, p_g\# t_2) \neq 0$ for all $t \in T$. Moreover, Lemma \ref{lem:abelianRgh}(2) implies that $R(p_g\# t, p_g\# f) = 0$ for any $t \in T$ and $f \in F$, and a similar argument yields $R(p_g\# f, p_g\# t) = 0$ for any $f \in F$ and $t \in T$.
Combining Corollary \ref{coro:FabelianRgf=0} and Lemma \ref{lem:abelianRgh}(1), one can get the second case.
\end{proof}

\end{document}